\title{Comodule Algebras and 2-Cocycles over the (Braided) Drinfeld Double}
\date{\today}
\author{Robert Laugwitz}
\address{Department of Mathematics, Rutgers University,
Hill Center, 110 Frelinghuysen Road,
Piscataway, NJ 08854-8019}
\email{robert.laugwitz@rutgers.edu}
\urladdr{https://www.math.rutgers.edu/~rul2/}
\newcommand{\leftexp}[2]{{\vphantom{#2}}^{#1}{#2}}
\newcommand{\leftexpsub}[3]{{\vphantom{#3}}^{#1}_{#2}{#3}}
\newcommand{\lYD}[1]{\leftexpsub{#1}{#1}{\mathbf{YD}}}
\newcommand{\rYD}[1]{\mathbf{YD}^{#1}_{#1}}
\newcommand{\op}[1]{\operatorname{#1}}
\newcommand{\oop}{\operatorname{op}}
\newcommand{\cop}{\operatorname{cop}}
\newcommand{\ov}[1]{\overline{#1}}
\newcommand{\un}[1]{\underline{#1}}
\newcommand{\lmod}[1]{#1\text{-}\mathbf{Mod}}
\newcommand{\rmod}[1]{\mathbf{Mod}\text{-}#1}
\newcommand{\lcomod}[1]{#1\text{-}\mathbf{CoMod}}
\newcommand{\rcomod}[1]{\mathbf{CoMod}\text{-}#1}
\newcommand{\coev}{\operatorname{coev}}
\newcommand{\Drin}{\operatorname{Drin}}
\newcommand{\ev}{\operatorname{ev}}
\newcommand{\Heis}{\operatorname{Heis}}
\newcommand{\Hom}{\operatorname{Hom}}
\newcommand{\ide}{\operatorname{Id}}
\newcommand{\Nat}{\operatorname{Nat}}
\newcommand{\triv}{\operatorname{triv}}
\newcommand{\Vect}{\mathbf{Vect}_\Bbbk}
\providecommand{\fr}[1]{\mathfrak{#1}}
\providecommand{\op}[1]{\operatorname{#1}}
\newcommand{\mC}{\mathbb{C}}
\newcommand{\mZ}{\mathbb{Z}}
\newcommand{\mF}{\mathbb{F}}
\newcommand{\cC}{\mathcal{C}}
\newcommand{\cB}{\mathcal{B}}
\newcommand{\cO}{\mathcal{O}}
\newcommand{\cT}{\mathcal{T}}
\newcommand{\cM}{\mathcal{M}}
\newcommand{\cZ}{\mathcal{Z}}
\newcommand{\rF}{\mathrm{F}}
\newtheoremstyle{mystyle}
  {0.5cm}                   
  {0.5cm}                   
  {\normalfont}           
  {}                      
  {\itfont\bfseries}  
  {:}                     
  {0.3cm}              
  {\thmname{#1}}
\newtheoremstyle{defstyle}
  {0.5cm}                   
  {0.5cm}                   
  {\normalfont}           
  {}     
  {\normalfont\bfseries}  
  {:}                     
  {0.3cm}              
  {\thmname{#1}\thmnumber{ #2}\thmnote{ (#3)}}
\numberwithin{equation}{section}
\newtheorem{theorem}{Theorem}[section]
\newtheorem{proposition}[theorem]{Proposition}
\newtheorem{corollary}[theorem]{Corollary}
\newtheorem{lemma}[theorem]{Lemma}
\newtheorem{theorem*}{Theorem}
\theoremstyle{definition}
\newtheorem{definition}[theorem]{Definition}
\theoremstyle{remark}
\newtheorem{example}[theorem]{Example}
\newtheorem{remark}[theorem]{Remark}
\renewcommand{\sectionmark}[1]		
	{
	\markboth{\small\it \thesection{} #1}{}
	}
\subjclass[2010]{Primary 16T05; Secondary 17B37, 20G42, 18D10}
\keywords{Braided Hopf algebras, comodule algebras, 2-cocycles, Drinfeld double, quantum groups, double bosonization}
\begin{document}

\begin{abstract}
We show that for dually paired bialgebras, every comodule algebra over one of the paired bialgebras gives a comodule algebra over their Drinfeld double via a crossed product construction. These constructions generalize to working with bialgebra objects in a braided monoidal category of modules over a quasitriangular Hopf algebra. Hence two ways to provide comodule algebras over the braided Drinfeld double (the double bosonization) are provided. Furthermore, a map of second Hopf algebra cohomology spaces is constructed. It takes a pair of 2-cocycles over dually paired Hopf algebras and produces a 2-cocycle over their Drinfeld double. This construction also has an analogue for braided Drinfeld doubles.
\end{abstract}
\maketitle



\section{Introduction}

\subsection{Motivation and Background}

The \emph{Drinfeld double} $\Drin(H)$ of a Hopf algebra $H$ from \cite{Dri} appears as part of the structures used in algebraic approaches to constructing different types of $3$-dimensional topological quantum field theories (TQFTs). For example, Dijkgraaf--Witten TQFTs involve the Drinfeld double $\Drin(G)$ of a group algebra $\Bbbk G$ \cites{DW,DPR}. 
Further, Reshe\-ti\-khin--Turaev type TQFTs, defined for a modular tensor category, are of particular interest in the case of the quantum groups $U_q(\fr{g})$ \cite{RT}. These quantum groups can be obtained from a \emph{braided} version of the Drinfeld double (the \emph{double bosonization}) of its nilpotent part \cite{Maj3}.

Related to the philosophy of \emph{categorification}, which aims to construct $4$-dimensional TQFTs by lifting the algebraic structures used in the construction of $3$-dimensional theories to categories \cite{CF}, modules over monoidal categories $\cM$ have been studied. A detailed account of this theory is given in \cite{EGNO}*{Chapter~7}. A left module over $\cM$ is a category $\cC$ admitting a $\Bbbk$-linear bifunctor 
$\triangleright \colon \cM\times \cC\longrightarrow\cC,$
such that the module axioms hold only up to natural isomorphisms which satisfy coherences. 

The category of modules over the Drinfeld double of a bialgebra $B$ is a braided monoidal category,  which is equivalent to the \emph{center} $\cZ(\lmod{B})$ of the category $\lmod{B}$ of modules over $B$. A theorem of \cites{Ost, EGNO} gives, for a finite tensor-category $\cM$, a categorical Morita equivalence
$\lmod{\cM\boxtimes \cM^{\oop}} \stackrel{\sim}{\longrightarrow} \lmod{\cZ(\cM)}.$
On the left hand side $\cM$-bimodules appear, while the right hand side consists of categorical modules over the monoidal center of $\cM$.

More generally, bialgebras (or Hopf algebras) $B$ can be defined in a braided monoidal category $\cB$ and are sometimes called \emph{braided bialgebras} (respectively, \emph{braided Hopf algebras}), see e.g. \cites{Maj1,AS}. Modules over such a braided bialgebra again form a monoidal category, denoted by $\lmod{B}(\cB)$. 
A central motivation for this paper is to contribute to understanding categorical modules not over the whole category $\cZ(\cM)$, for $\cM=\lmod{B}(\cB)$, but over the a subcategory, called the \emph{relative} monoidal center $\cZ_{\cB}(\cM)\subseteq \cZ(\cM)$ defined on the categorical level in \cites{Lau,Lau2}. For the framework of this paper, it is sufficient to view $\cZ_{\cB}(\cM)$ as the equivalent category $\lYD{B}(\cB)$ of \emph{Yetter--Drinfeld modules} (also called \emph{crossed modules}) over $B$ within $\cB$ of \cites{Bes, BD}.


Working with braided Hopf algebras has become a productive point of view in quantum algebra (see e.g. \cites{Maj8,Maj6, Lub, AS} and other papers). A prominent example is given by the positive part $U_q(\fr{n}^+)$ of the quantum group $U_q(\fr{g})$ associated to a Kac--Moody algebra $\fr{g}$ \cites{Dri,Jim,Lus}. The algebra $U_q(\fr{n}^+)$ is not a Hopf algebra over $\mC(q)$, but a Hopf algebra object in the braided monoidal category of comodules over a lattice, with a special, non-symmetric, braiding obtained from the parameter $q$ and the Cartan datum defining $\fr{g}$ \cites{Maj3,AS}.

The relative monoidal center of $U_q(\fr{n}^+)$-modules is equivalent to the braided monoidal category of highest weight modules over $U_q(\fr{g})$ \cite{Lau2}. To study modules over the quantum group $U_q(\fr{g})$, it is often reasonable to assume certain restrictions on the modules. For example, one can study the category consisting of \emph{highest weight modules}, which are semisimple with integral weights, such that $U_q(\fr{g}^+)$ acts locally finitely. In addition, finite generation as $U_q(\fr{g}^+)$-modules can be assumed, giving an analogue of the category $\cO$ \cite{BGG} for quantum groups \cite{AM}. However, the class of finitely generated modules is not closed under tensor products.
This point of view that the relative center is a natural category to study holds for a larger class of braided Hopf algebras of infinite dimension \cite{Lau}*{Section 3.7}. This relative version of the center turns out to be related to the \emph{braided Drinfeld double}, called \emph{double bosonization} in \cite{Maj2}. 

Another structure of interest in this paper are bialgebra $2$-cocycles. These are a part of a generalization of Sweedler's bialgebra cohomology \cite{Swe} with coefficients in $\Bbbk$ to non-cocommutative bialgebras \cite{Maj4}.
Twists by bialgebra $2$-cocycles are a fundamental tool in the deformation theory leading to quantum groups, and the study of the larger class of pointed Hopf algebras. The idea goes back to \cite{Dri}, where a Hopf algebra can be given a twisted coproduct by conjugation with a $2$-cycle, and \cite{DT2} for the twist of the algebra structure by conjugation with a $2$-cocycle, see also \cites{AS, Mas, AS2}. In this paper, we give an explicit map inducing bialgebra $2$-cocycles on the (braided) Drinfeld double from the datum of bialgebra $2$-cocycles on $B$ and its dual. Such $2$-cocycles have been classified for the Drinfeld double and similarly constructed bicross product algebras by Schauenburg \cites{Sch2,Sch3}. Other results on inducing so-called \emph{lazy} $2$-cocycles on the Drinfeld double are found in \cites{BC,CP}.

\subsection{Summary}

Let $\cB$ be a braided monoidal $\Bbbk$-linear category, and $A$ a bialgebra in $\cB$. Further let $A$ be a $B$-comodule algebra in $\cB$ (see Section \ref{ydtensoractions}). The first main result of the paper (Theorem \ref{mainthm}) is that there exists a left categorical action  
$$\triangleright \colon \lYD{B}\times \lmod{A}(\lcomod{B})\longrightarrow \lmod{A}(\lcomod{B}),$$
of the monoidal category $\lYD{B}$ of left Yetter--Drinfeld modules over $B$ from \cites{Bes,BD} (cf. Definition \ref{YDmodules}). A similar result holds given a right $B$-module algebra $A$, see Theorem \ref{mainthm2}.
These actions generalize the induced $A$-module structure in a non-trivial way (cf. Example \ref{trivialcoaction}). In the special case where $A=B$ with the regular coaction, the result gives an action of $\lYD{B}$ on \emph{Hopf modules} over $B$ (Example \ref{regularcoaction}) similar to \cites{Lu,Lau}. If $B=H$ is a commutative Hopf algebra, using $A=H$ with the adjoint coaction, the functor $\triangleright$ corresponds, under equivalence, to the tensor product of $\lYD{H}$ (see Example \ref{adjointcoaction}).

In Section \ref{algebrapicture}, we slightly generalize these results to produce comodule algebra over a braided version of the Drinfeld double, (the double bosonization of \cites{Maj3,Maj1}), which we denote by $\Drin_H(C,B)$. For this, let $\cB=\lmod{H}$ where $H$ is a quasitriangular Hopf algebra over a field $\Bbbk$, and let $C,B$ be bialgebras with a bialgebra pairing $\ev\colon C\otimes B\to \Bbbk$ in $\cB$. This way, there is a monoidal functor
$$\Psi\colon \lYD{B}\longrightarrow \lmod{\Drin_H(C,B)},$$
see Proposition \ref{drinfeldprop}. The setup allows for infinite-dimensional bialgebras $C,B$, for which the latter category may be larger. If the pairing $\ev$ is non-degenerate, then $\Psi$ is fully faithful.

The main results of  Section \ref{algebrapicture} are:
\begin{itemize}
\item Let $A$ be a left $B$-comodule algebra in $\cB=\lmod{H}$. Then $A\rtimes \leftexp{\cop}{C}\rtimes H$ is a left $\Drin_H(C,B)$-comodule algebra, see Corollary \ref{comodulealgebracor}.
\item Let $A$ be a right $C$-comodule algebra in $\cB=\lmod{H}$. Then $A\rtimes B\rtimes H$ is a left $\Drin_H(C,B)$-comodule algebra, see Corollary \ref{comodulealgebracor2}.
\end{itemize}
A notable example is that the braided Heisenberg double is a comodule algebra over the braided Drinfeld double $\Drin_H(C,B)$ (Example \ref{comodulealgebracor}). This case recovers a result of \cite{Lau} generalizing \cite{Lu}. 

Weakening the assumption of $H$ being quasitriangular to that of a \emph{weak quasitriangular structure} of \cite{Maj3}, cf. Definition \ref{weakquasi}, the above results can be applied to Lusztig's version of the quantum groups $U_q(\fr{g})$. Namely, for $B=U_q(\fr{n}^+)$, the nilpotent part of the quantum group, its braided Drinfeld double with $U_q(\fr{n}^-)$ is isomorphic as a Hopf algebra to $U_q(\fr{g})$ by \cite{Maj3}. The results of this paper add that \emph{any} $U_q(\fr{n}^+)$-comodule algebra can be used to produce a $U_q(\fr{g})$-comodule algebra. In particular, the regular comodule algebra gives an algebra $D_q(\fr{g})$ containing the quantum Weyl algebra  of \cite{Jos}, cf. Corollary \ref{quantumweyl}.

In Section \ref{2cocycles}, we investigate the construction of bialgebra $2$-cocycles over the (braided) Drinfeld double from $2$-cocycles over weakly dually paired Hopf algebras. In the most general form (Theorem \ref{general2cocycles}), there is a map 
$$H^2_H(B,\Bbbk)\times H^2_H(\leftexp{\cop}{C},\Bbbk)\longrightarrow H^2(\Drin_{H}(C,B),\Bbbk),\qquad (\sigma,\tau) \longmapsto \sigma\circ\tau,$$
where $C, B$ are dually paired Hopf algebra objects in the braided monoidal category $\lmod{H}$ for a quasitriangular Hopf algebra $H$.
The subscripts $H$ here indicate that these $2$-cocycles live in the braided monoidal category $\lmod{H}$ (see Section \ref{braidedcycles} for basic facts of a theory of such braided $2$-cocycles). Such $2$-cocycles in $\cB$ were defined in \cite{BD1} and relate to $2$-cocycles of a monoidal category as discussed in \cites{PSO,BB} in general.

The $\Drin_H(C,B)$-comodule algebras constructed in Section \ref{algebrapicture} relate to the map $$\op{Ind}_B\colon H^2_H(B,\Bbbk)\longrightarrow H^2(\Drin_{H}(C,B),\Bbbk), \qquad \sigma \longmapsto \sigma\circ \triv_C$$
by means of constructing one-sided twists, or \emph{cleft objects} (see \cites{DT,Sch4, Mas}).
If $B_\sigma$ denotes the one-sided twist of $B$ by $\sigma$, then $B_\sigma \rtimes \leftexp{\cop}{C}\rtimes H$ is the one-sided twist of $\Drin_H(C,B)$ by $\op{Ind}_B(\sigma)$ using the $\Drin_H(C,B)$-comodule algebra structure from Theorem \ref{maincor}. In particular, using the trivial $2$-cocycle $\sigma=\triv_B$, we recover the result of \cite{Lau} that the braided Heisenberg double $\Heis_H(C,B)$ is a one-sided $2$-cocycle twist of $\Drin_H(C,B)$ (see Example \ref{heistwist}). This is a generalization of an earlier result of \cite{Lu}.
An example is that the $q$-Weyl algebra $D_q(\fr{g})$ is a $2$-cocycle twist of $U_q(\fr{g})$.

To conclude the summary, note that the categorical actions from Theorem \ref{mainthm} and \ref{mainthm2} fit into a more general categorical picture \cite{Lau2} of constructing categorical modules over the relative monoidal center. We further note that it would be interesting to construct more explicit examples of comodule algebras over the class of braided Hopf algebras given by Nichols algebras (see e.g. \cites{AS,AS2}). In the literature of Nichols algebras over a group, examples of such comodule algebras can be given using twists by $2$-cocycles which are typically defined over the bosonization (Radford biproduct \cite{Rad}) of the braided Hopf algebra by twisting the group part, see e.g. \cite{Mas}.

\subsection{Setup}\label{setup}

In this paper, $\cB$ is a braided monoidal category with braiding $\Psi$. We assume that $\cB$ is $\Bbbk$-linear (or even abelian), over $\Bbbk$ a field. As a prototype example, we may think of $\cB$ as a category $\lmod{H}$ for $H$ a quasitriangular Hopf algebra over $\Bbbk$, or $\lcomod{H}$ for $H$ dual quasitriangular (see e.g. \cite{Maj6}*{Section 1.3}). All categories considered are assumed to be equivalent to small categories.

Let $A$ be an algebra object in $\cB$, with associative multiplication $m\colon A\otimes A\to A$ and unit $1\colon I\to A$. We can then consider the category $\lmod{A}=\lmod{A}(\cB)$ of left $A$-modules in $\cB$. Here, a left $A$-module is an object $V$ of $\cB$ together with a morphism $a\colon A\otimes V\to V$ in $\cB$ satisfying the usual module axioms:
\begin{align}
a(m\otimes \ide_V)&=a(\ide_A\otimes a), &&a(1\otimes\ide_V)=\ide_V.
\end{align}
This category is again $\Bbbk$-linear (and abelian if $\cB$ is abelian). Analogously, we denote by $\rmod{A}$, $\lcomod{A}$, and $\rcomod{A}$ the categories of \emph{right} $A$-modules, left $A$-comodules, respectively right $A$-comodules. Although these categories consists of objects in $\cB$ and all morphisms are in $\cB$, we often suppress $\cB$ in the notation of these categories.

Unless otherwise specified, $B$ denotes a bialgebra object in $\cB$. That is, $B$ has an algebra structure $m\colon B\otimes B\to B$, $1\colon I \to B$, and a coalgebra structure $\Delta\colon B\to B\otimes B$, $\varepsilon\colon B\to I$, such that the following axioms are satisfied:
\begin{gather}
m(m\otimes \ide_B)=m(\ide_B\otimes m),\\
m(1\otimes \ide_B)=m(\ide_B\otimes 1)=\ide_B,\\
(\Delta\otimes \ide_B)\Delta=(\ide_B\otimes \Delta)\Delta,\\
(\varepsilon\otimes \ide_B)\Delta=(\ide_B\otimes \varepsilon)\Delta=\ide_B,\\
\Delta m=m\otimes m(\ide_B\otimes \Psi_{B,B}\otimes \ide_B)(\Delta\otimes \Delta),\label{bialgebra}\\
\Delta 1=1\otimes 1,\\
\varepsilon m=\varepsilon\otimes \varepsilon.
\end{gather}
Here, the last three axioms say that $\Delta, \varepsilon$ are morphisms of algebras (equivalently, $m$ and $1$ are morphisms of coalgebras). Morphisms of bialgebras are morphisms in $\cB$ which commute with all these structures. If, in addition, there exists a morphism $S\colon B\to B$ in $\cB$ which is a two-sided convolution inverse to $\ide_B$, i.e.
\begin{align}
m(\ide_B\otimes S)\Delta&=m(S\otimes \ide_B)\Delta=1\varepsilon,
\end{align}
then we say $B$ is a \emph{Hopf algebra} in $\cB$. We assume that all Hopf algebras have an invertible antipode. Morphisms of bialgebras between Hopf algebras commute with the antipodes.

Further, we say that $B$ is \emph{commutative} in $\cB$ if $m=m\Psi$, which is equivalent to $m=m\Psi^{-1}$.

Graphical calculus, similar to that used in \cite{Maj6}, is helpful for computations in the monoidal category $\lmod{B}(\cB)$. For example, the $B$-module structure $\triangleright_{V\otimes W}$ on the tensor product $V\otimes W$ of left $B$-modules $V$, $W$ is depicted as
\begin{equation}\label{tensorproductaction}\triangleright_{V\otimes W}= (\triangleright_V\otimes \triangleright_W)(\ide_B\otimes \Psi_{B,V}\otimes \ide_W)(\Delta\otimes \ide_{V\otimes W})=\vcenter{\hbox{
\begingroup%
  \makeatletter%
  \providecommand\color[2][]{%
    \errmessage{(Inkscape) Color is used for the text in Inkscape, but the package 'color.sty' is not loaded}%
    \renewcommand\color[2][]{}%
  }%
  \providecommand\transparent[1]{%
    \errmessage{(Inkscape) Transparency is used (non-zero) for the text in Inkscape, but the package 'transparent.sty' is not loaded}%
    \renewcommand\transparent[1]{}%
  }%
  \providecommand\rotatebox[2]{#2}%
  \ifx\svgwidth\undefined%
    \setlength{\unitlength}{30.44988496bp}%
    \ifx\svgscale\undefined%
      \relax%
    \else%
      \setlength{\unitlength}{\unitlength * \real{\svgscale}}%
    \fi%
  \else%
    \setlength{\unitlength}{\svgwidth}%
  \fi%
  \global\let\svgwidth\undefined%
  \global\let\svgscale\undefined%
  \makeatother%
  \begin{picture}(1,0.88991524)%
    \put(0,0){\includegraphics[width=\unitlength,page=1]{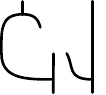}}%
    \put(0.5005321,0.47717664){\color[rgb]{0,0,0}\makebox(0,0)[lb]{\smash{$~$}}}%
    \put(0,0){\includegraphics[width=\unitlength,page=2]{tensoraction.pdf}}%
  \end{picture}%
\endgroup%
}}.\end{equation}
Here, $\Psi=\vcenter{\hbox{
\begingroup%
  \makeatletter%
  \providecommand\color[2][]{%
    \errmessage{(Inkscape) Color is used for the text in Inkscape, but the package 'color.sty' is not loaded}%
    \renewcommand\color[2][]{}%
  }%
  \providecommand\transparent[1]{%
    \errmessage{(Inkscape) Transparency is used (non-zero) for the text in Inkscape, but the package 'transparent.sty' is not loaded}%
    \renewcommand\transparent[1]{}%
  }%
  \providecommand\rotatebox[2]{#2}%
  \ifx\svgwidth\undefined%
    \setlength{\unitlength}{8.22109334bp}%
    \ifx\svgscale\undefined%
      \relax%
    \else%
      \setlength{\unitlength}{\unitlength * \real{\svgscale}}%
    \fi%
  \else%
    \setlength{\unitlength}{\svgwidth}%
  \fi%
  \global\let\svgwidth\undefined%
  \global\let\svgscale\undefined%
  \makeatother%
  \begin{picture}(1,1.0011064)%
    \put(0,0){\includegraphics[width=\unitlength,page=1]{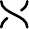}}%
  \end{picture}%
\endgroup%
}}$ denotes the braiding in $\cB$, $\Delta=\vcenter{\hbox{
\begingroup%
  \makeatletter%
  \providecommand\color[2][]{%
    \errmessage{(Inkscape) Color is used for the text in Inkscape, but the package 'color.sty' is not loaded}%
    \renewcommand\color[2][]{}%
  }%
  \providecommand\transparent[1]{%
    \errmessage{(Inkscape) Transparency is used (non-zero) for the text in Inkscape, but the package 'transparent.sty' is not loaded}%
    \renewcommand\transparent[1]{}%
  }%
  \providecommand\rotatebox[2]{#2}%
  \ifx\svgwidth\undefined%
    \setlength{\unitlength}{16.80101968bp}%
    \ifx\svgscale\undefined%
      \relax%
    \else%
      \setlength{\unitlength}{\unitlength * \real{\svgscale}}%
    \fi%
  \else%
    \setlength{\unitlength}{\svgwidth}%
  \fi%
  \global\let\svgwidth\undefined%
  \global\let\svgscale\undefined%
  \makeatother%
  \begin{picture}(1,0.76387957)%
    \put(0,0){\includegraphics[width=\unitlength]{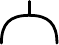}}%
  \end{picture}%
\endgroup%
}}$ denotes the coproduct of $B$, and $\triangleright=\vcenter{\hbox{
\begingroup%
  \makeatletter%
  \providecommand\color[2][]{%
    \errmessage{(Inkscape) Color is used for the text in Inkscape, but the package 'color.sty' is not loaded}%
    \renewcommand\color[2][]{}%
  }%
  \providecommand\transparent[1]{%
    \errmessage{(Inkscape) Transparency is used (non-zero) for the text in Inkscape, but the package 'transparent.sty' is not loaded}%
    \renewcommand\transparent[1]{}%
  }%
  \providecommand\rotatebox[2]{#2}%
  \ifx\svgwidth\undefined%
    \setlength{\unitlength}{9.12051093bp}%
    \ifx\svgscale\undefined%
      \relax%
    \else%
      \setlength{\unitlength}{\unitlength * \real{\svgscale}}%
    \fi%
  \else%
    \setlength{\unitlength}{\svgwidth}%
  \fi%
  \global\let\svgwidth\undefined%
  \global\let\svgscale\undefined%
  \makeatother%
  \begin{picture}(1,1.40715315)%
    \put(0,0){\includegraphics[width=\unitlength]{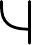}}%
    \put(0.13167899,0.48417802){\color[rgb]{0,0,0}\makebox(0,0)[lb]{\smash{$~$
}}}%
  \end{picture}%
\endgroup%
}}\colon B\otimes V\to V$ denotes a left action of $B$ on $V$.
To give another example of such graphical calculus, the bialgebra condition (\ref{bialgebra}) can be displayed as:
\begin{equation}\vcenter{\hbox{
\begingroup%
  \makeatletter%
  \providecommand\color[2][]{%
    \errmessage{(Inkscape) Color is used for the text in Inkscape, but the package 'color.sty' is not loaded}%
    \renewcommand\color[2][]{}%
  }%
  \providecommand\transparent[1]{%
    \errmessage{(Inkscape) Transparency is used (non-zero) for the text in Inkscape, but the package 'transparent.sty' is not loaded}%
    \renewcommand\transparent[1]{}%
  }%
  \providecommand\rotatebox[2]{#2}%
  \ifx\svgwidth\undefined%
    \setlength{\unitlength}{75.57609773bp}%
    \ifx\svgscale\undefined%
      \relax%
    \else%
      \setlength{\unitlength}{\unitlength * \real{\svgscale}}%
    \fi%
  \else%
    \setlength{\unitlength}{\svgwidth}%
  \fi%
  \global\let\svgwidth\undefined%
  \global\let\svgscale\undefined%
  \makeatother%
  \begin{picture}(1,0.40732931)%
    \put(0.31497865,0.20559355){\color[rgb]{0,0,0}\makebox(0,0)[lb]{\smash{ }}}%
    \put(0,0){\includegraphics[width=\unitlength,page=1]{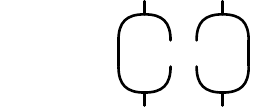}}%
    \put(0.25222581,0.17800578){\color[rgb]{0,0,0}\makebox(0,0)[lb]{\smash{$=$}}}%
    \put(0,0){\includegraphics[width=\unitlength,page=2]{bialgebra.pdf}}%
    \put(0.79876256,0.15420241){\color[rgb]{0,0,0}\makebox(0,0)[lb]{\smash{$~$}}}%
    \put(0,0){\includegraphics[width=\unitlength,page=3]{bialgebra.pdf}}%
  \end{picture}%
\endgroup%
}}.\end{equation}

\begin{lemma}\label{weirdcross}
For $B$ a commutative (or cocommutative) bialgebra in $\cB$, $B$ is also a bialgebra in $\overline{\cB}=(\cB,\Psi^{-1})$, the braided monoidal category with \emph{inverse} braiding.
\end{lemma}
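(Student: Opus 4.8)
The plan is to isolate the only axiom that actually sees the braiding and to reduce the statement to a single identity of morphisms, which I then verify by a short braid computation. Among the bialgebra axioms, associativity, coassociativity, the unit and counit conditions, together with $\Delta 1 = 1\otimes 1$ and $\varepsilon m = \varepsilon\otimes\varepsilon$, are formulated without reference to the braiding; since $\overline{\cB}=(\cB,\Psi^{-1})$ has the same underlying monoidal category and the same morphisms as $\cB$, these axioms hold verbatim in $\overline{\cB}$. The entire content therefore lies in the compatibility axiom, which in $\overline{\cB}$ reads
\[
\Delta m=(m\otimes m)(\ide_B\otimes \Psi^{-1}_{B,B}\otimes \ide_B)(\Delta\otimes\Delta).
\]
Call its right-hand side $G$. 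Since $\Delta m$ is unchanged, it suffices to show $G=\Delta m$, knowing from (\ref{bialgebra}) that the analogous expression $F$ with $\Psi_{B,B}$ in place of $\Psi^{-1}_{B,B}$ already equals $\Delta m$. Throughout I write $\Psi_{i,i+1}$ for $\Psi_{B,B}$ applied in the $i$-th and $(i+1)$-st tensor factors of $B^{\otimes 4}$, so that $G=(m\otimes m)\Psi^{-1}_{2,3}(\Delta\otimes\Delta)$ and $F=(m\otimes m)\Psi_{2,3}(\Delta\otimes\Delta)$.

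In the commutative case, $m=m\Psi_{B,B}$ forces $m=m\Psi^{-1}_{B,B}$, hence $(m\otimes m)\Psi^{-1}_{1,2}\Psi^{-1}_{3,4}=m\otimes m$: crossings in the outer slots are absorbed by the two multiplications. Starting from $G$, I would insert these absorbed crossings and then use the braid relations — together with the expansion of the block braiding $\Psi_{B\otimes B,B\otimes B}=\Psi_{2,3}\Psi_{1,2}\Psi_{3,4}\Psi_{2,3}$ — to rewrite $\Psi^{-1}_{3,4}\Psi^{-1}_{1,2}\Psi^{-1}_{2,3}=\Psi_{2,3}\,\Psi^{-1}_{B\otimes B,B\otimes B}$. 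Naturality of the braiding with respect to $\Delta\otimes\Delta$ gives $\Psi^{-1}_{B\otimes B,B\otimes B}(\Delta\otimes\Delta)=(\Delta\otimes\Delta)\Psi^{-1}_{B,B}$, so $G=(m\otimes m)\Psi_{2,3}(\Delta\otimes\Delta)\Psi^{-1}_{B,B}=F\Psi^{-1}_{B,B}=(\Delta m)\Psi^{-1}_{B,B}$ by (\ref{bialgebra}); a final use of commutativity $m\Psi^{-1}_{B,B}=m$ yields $G=\Delta m$.

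The cocommutative case is the vertical mirror of this argument: here $\Delta=\Psi_{B,B}\Delta=\Psi^{-1}_{B,B}\Delta$, so crossings adjacent to $\Delta\otimes\Delta$ are absorbed by the two comultiplications, and the same block-braiding identity combined with naturality with respect to $m\otimes m$ (namely $(m\otimes m)\Psi^{-1}_{B\otimes B,B\otimes B}=\Psi^{-1}_{B,B}(m\otimes m)$) reduces $G$ to $\Psi^{-1}_{B,B}\Delta m=\Delta m$. Conceptually, the two cases simply record that the opposite bialgebra $B^{\mathrm{op}}$ and the co-opposite bialgebra $B^{\mathrm{cop}}$ of a braided bialgebra are bialgebras in $\overline{\cB}$, specialized to the point where (co)commutativity collapses $B^{\mathrm{op}}$ (respectively $B^{\mathrm{cop}}$) back onto $B$.

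The step I expect to be the main obstacle is the braid bookkeeping in the middle: writing the block braiding $\Psi_{B\otimes B,B\otimes B}$ as the correct four-letter positive word and verifying that the crossings left over after applying the braid relations are precisely those of the form $\Psi^{\pm}_{1,2},\Psi^{\pm}_{3,4}$ (absorbed by commutativity), respectively those adjacent to the coproducts (absorbed by cocommutativity). The two naturality identities and the reduction to (\ref{bialgebra}) are then routine, and the whole computation is most transparent in the graphical calculus introduced above, where the absorbed crossings and the block braiding can simply be read off the diagram.
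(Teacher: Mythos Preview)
Your proof is correct and is essentially the same computation as the paper's, run in the opposite direction: the paper starts from $\Delta m$, applies commutativity to insert a $\Psi^{-1}$ on the right, uses the bialgebra axiom (\ref{bialgebra}) and naturality to push it through, and then absorbs the resulting $\Psi^{-1}$'s on the outer slots by commutativity again, arriving at $G$; you start from $G$ and reverse these steps. Your exposition is more explicit about the block-braiding decomposition $\Psi_{B\otimes B,B\otimes B}=\Psi_{2,3}\Psi_{1,2}\Psi_{3,4}\Psi_{2,3}$, which the paper leaves implicit in the passage from $(m\otimes m)\Delta_{B\otimes B}\Psi^{-1}$ to $(m\Psi^{-1}\otimes m\Psi^{-1})(\ide_B\otimes\Psi^{-1}\otimes\ide_B)(\Delta\otimes\Delta)$, but the underlying braid identity is identical.
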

\begin{proof}We have to verify the bialgebra condition using the inverse braiding $\Psi^{-1}$ instead of the braiding $\Psi$. For $B$ commutative in $B$, this amounts to the following computation:
\begin{align*}
\Delta m&=\Delta m\Psi^{-1}=(m\otimes m)\Delta_{B\otimes B}\Psi^{-1}\\
&=(m\Psi^{-1}\otimes m\Psi^{-1})(\ide_B\otimes \Psi^{-1}\otimes \ide_B)(\Delta\otimes \Delta)\\
&=(m\otimes m)(\ide_B\otimes \Psi^{-1}\otimes \ide_B)(\Delta\otimes \Delta).
\end{align*}
The proof for $B$ cocommutative is similar; it follows dually.
\end{proof}
In fact, it was shown in \cite{Sch} that for a cocommutative Hopf algebra in $\cB$, it follows that $\Psi_{H,H}^2=\ide_{H\otimes H}$, i.e. the braiding on $H$ is \emph{symmetric}.

For $A$ an algebra and $C$ a coalgebra in $\cB$, there exists a \emph{convolution product} $\ast$ on $\Hom_{\cB}(C,A)$, where
\begin{align}
\phi\ast \psi &=m_A(\phi\otimes \psi)\Delta_C, &&\forall \phi,\psi\colon C\to A.
\end{align}
Note that $1_A\varepsilon_C$ is the neutral element with respect to $\ast$. If a two-sided inverse for $\phi$ exists, then it is unique, and we denote it by $\phi^{-\ast}$.

Further, recall that a bialgebra $B$ over $\Bbbk$ is \emph{quasitriangular} if there exists a \emph{universal R-matrix} $R\in B\otimes B$ such that
\begin{align}\label{Rmatrix1}
{R^{(1)}}_{(1)}\otimes {R^{(1)}}_{(2)}\otimes {R^{(2)}}&=R_1^{(1)}\otimes R_2^{(1)}\otimes R_1^{(2)}R_2^{(2)},\\\label{Rmatrix2}
R^{(1)}\otimes {R^{(2)}}_{(1)}\otimes {R^{(2)}}_{(2)}&=R_1^{(1)}R_2^{(1)}\otimes R_2^{(2)}\otimes R_1^{(2)},\\\label{Rmatrix3}
R^{(1)}b_{(1)}\otimes R^{(2)}b_{(2)}&=b_{(2)}R^{(1)}\otimes b_{(1)}R^{(2)}, &\forall b\in B,
\end{align}
cf. \cite{Maj1}*{Definition 2.1.1}.
We require $R$ to be convolution invertible.
Given $B$ quasitriangular, the category $\lmod{B}$ is braided with braiding (cf. \cite{Maj1}*{Theorem 9.2.4})
\begin{align}
\Psi_{V,W}(v\otimes w)= (R^{(2)}\triangleright w)\otimes (R^{(1)}\triangleright v), && \forall v\in V, w\in W.
\end{align}
Here, the notation $b\triangleright v$ is used to denote a left $B$-action.

Dually, $B$ is \emph{dual quasitriangular} if there exists a \emph{dual R-matrix} $R\colon B\otimes B\to \Bbbk$ such that
\begin{align}\label{dualR1}
R(ab,c)&=R(a,c_{(1)})R(b,c_{(2)}), \\ \label{dualR2} R(a,bc)&=R(a_{(1)},c)R(a_{(2)},b), \\ \label{dualR3} 
a_{(2)}b_{(2)}R(a_{(1)},b_{(1)})&=b_{(1)}a_{(1)}R(a_{(2)},b_{(2)}),
\end{align}
for all $a,b,c\in B$; cf. \cite{Maj1}*{Definition 2.2.1}.
In this case, the category of left $B$-comodules is braided monoidal with braiding given by
\begin{equation}
\Psi_{V,W}(v\otimes w)= w^{(0)}\otimes v^{(0)}R(w^{(-1)},v^{(-1)}),
\end{equation}
where $\delta(v)=v^{(-1)}\otimes v^{(0)}\in B\otimes V$ is adapted Sweedler's notation for the left $B$-coaction on $V$ (sums are implicit).


\section{A Tensor Product Action by Yetter--Drinfeld Modules}\label{ydtensoractions}

In this section, $B$ is a bialgebra in a braided monoidal category $\cB$ (cf. Section \ref{setup}). Let $A$ be an algebra object in the category $\lcomod{B}$. That is, $A$ has a $B$-comodule structure $\delta\colon A\to B\otimes A$, a multiplication $m_A\colon A\otimes A \to A$, and a unit $1_A\colon I\to A$, such that
\begin{gather}
\delta m_A=(m_B\otimes m_A)(\ide_B\otimes \Psi_{A,B}\otimes \ide_A)(\delta\otimes \delta),\\
\delta 1_A=1_B\otimes 1_A.
\end{gather}
These conditions can be depicted as
\begin{equation}\label{comodalg}\vcenter{\hbox{
\begingroup%
  \makeatletter%
  \providecommand\color[2][]{%
    \errmessage{(Inkscape) Color is used for the text in Inkscape, but the package 'color.sty' is not loaded}%
    \renewcommand\color[2][]{}%
  }%
  \providecommand\transparent[1]{%
    \errmessage{(Inkscape) Transparency is used (non-zero) for the text in Inkscape, but the package 'transparent.sty' is not loaded}%
    \renewcommand\transparent[1]{}%
  }%
  \providecommand\rotatebox[2]{#2}%
  \ifx\svgwidth\undefined%
    \setlength{\unitlength}{141.37503922bp}%
    \ifx\svgscale\undefined%
      \relax%
    \else%
      \setlength{\unitlength}{\unitlength * \real{\svgscale}}%
    \fi%
  \else%
    \setlength{\unitlength}{\svgwidth}%
  \fi%
  \global\let\svgwidth\undefined%
  \global\let\svgscale\undefined%
  \makeatother%
  \begin{picture}(1,0.21843502)%
    \put(0.16843524,0.11068584){\color[rgb]{0,0,0}\makebox(0,0)[lb]{\smash{ }}}%
    \put(0,0){\includegraphics[width=\unitlength,page=1]{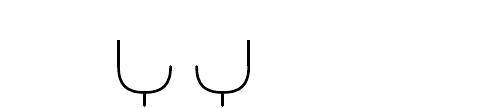}}%
    \put(0.13488891,0.09593801){\color[rgb]{0,0,0}\makebox(0,0)[lb]{\smash{$=$}}}%
    \put(0,0){\includegraphics[width=\unitlength,page=2]{comodalg.pdf}}%
    \put(0.55958177,0.10129679){\color[rgb]{0,0,0}\makebox(0,0)[lb]{\smash{$,$}}}%
    \put(0,0){\includegraphics[width=\unitlength,page=3]{comodalg.pdf}}%
    \put(0.7714935,0.09631749){\color[rgb]{0,0,0}\makebox(0,0)[lb]{\smash{$=$}}}%
    \put(0,0){\includegraphics[width=\unitlength,page=4]{comodalg.pdf}}%
  \end{picture}%
\endgroup%
}},\end{equation}
where $1=\vcenter{\hbox{
\begingroup%
  \makeatletter%
  \providecommand\color[2][]{%
    \errmessage{(Inkscape) Color is used for the text in Inkscape, but the package 'color.sty' is not loaded}%
    \renewcommand\color[2][]{}%
  }%
  \providecommand\transparent[1]{%
    \errmessage{(Inkscape) Transparency is used (non-zero) for the text in Inkscape, but the package 'transparent.sty' is not loaded}%
    \renewcommand\transparent[1]{}%
  }%
  \providecommand\rotatebox[2]{#2}%
  \ifx\svgwidth\undefined%
    \setlength{\unitlength}{4.8bp}%
    \ifx\svgscale\undefined%
      \relax%
    \else%
      \setlength{\unitlength}{\unitlength * \real{\svgscale}}%
    \fi%
  \else%
    \setlength{\unitlength}{\svgwidth}%
  \fi%
  \global\let\svgwidth\undefined%
  \global\let\svgscale\undefined%
  \makeatother%
  \begin{picture}(1,2.56889521)%
    \put(0,0){\includegraphics[width=\unitlength]{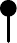}}%
  \end{picture}%
\endgroup%
}}$ denotes the units (of $B$ and $A$). Given $A$, the category $\lmod{A}(\lcomod{B})$ can be defined, consisting of left $A$-modules in $\cB$ such that the $A$-action is a morphism of left $B$-comodules, where $B$ itself has the regular $B$-comodule structure given by $\Delta$.

\subsection{Yetter--Drinfeld Modules}

We recall some facts about the category of Yetter--Drinfeld modules over a bialgebra in $\cB$. Such modules were introduced in \cite{Bes}*{Section 3.3} and \cite{BD} and called \emph{crossed modules} therein.

\begin{definition}\label{YDmodules}
The category $\lYD{B}=\lYD{B}(\cB)$ of left \emph{Yetter--Drinfeld modules} over $B$ in $\cB$ consists of objects $V$ in $\cB$ which are equipped with a left $B$-action $a$ and a left $B$-coaction $\delta$ in $\cB$, such that the following compatibility holds:
\begin{equation}\label{YDcond}
\begin{split}
&(m_B\otimes a)(\ide_B\otimes \Psi_{B,B}\otimes \ide_V)(\Delta\otimes \delta)\\&=(m_B\otimes \ide_V)(\ide_B\otimes \Psi_{V,B})(\delta a\otimes \ide_V)(\ide_B\otimes \Psi_{B,V})(\Delta\otimes \ide_V).
\end{split}
\end{equation}
That is,
\begin{equation}\label{ydpic}\vcenter{\hbox{
\begingroup%
  \makeatletter%
  \providecommand\color[2][]{%
    \errmessage{(Inkscape) Color is used for the text in Inkscape, but the package 'color.sty' is not loaded}%
    \renewcommand\color[2][]{}%
  }%
  \providecommand\transparent[1]{%
    \errmessage{(Inkscape) Transparency is used (non-zero) for the text in Inkscape, but the package 'transparent.sty' is not loaded}%
    \renewcommand\transparent[1]{}%
  }%
  \providecommand\rotatebox[2]{#2}%
  \ifx\svgwidth\undefined%
    \setlength{\unitlength}{83.4452175bp}%
    \ifx\svgscale\undefined%
      \relax%
    \else%
      \setlength{\unitlength}{\unitlength * \real{\svgscale}}%
    \fi%
  \else%
    \setlength{\unitlength}{\svgwidth}%
  \fi%
  \global\let\svgwidth\undefined%
  \global\let\svgscale\undefined%
  \makeatother%
  \begin{picture}(1,0.54781815)%
    \put(0,0.27579781){\color[rgb]{0,0,0}\makebox(0,0)[lb]{\smash{ }}}%
    \put(0,0){\includegraphics[width=\unitlength,page=1]{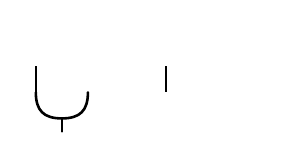}}%
    \put(0.64422368,0.25081165){\color[rgb]{0,0,0}\makebox(0,0)[lb]{\smash{$=$}}}%
    \put(0,0){\includegraphics[width=\unitlength,page=2]{ydcond.pdf}}%
    \put(0.44325035,0.26670016){\color[rgb]{0,0,0}\makebox(0,0)[lb]{\smash{$~$\\  }}}%
    \put(0,0){\includegraphics[width=\unitlength,page=3]{ydcond.pdf}}%
  \end{picture}%
\endgroup%
}}~.\end{equation}
Morphisms in  $\lYD{B}$ are required to commute with both the $B$-action and coaction.
\end{definition}

It was shown in \cite{Bes}*{Theorem 3.4.3} that the category $\lYD{B}$ is a braided monoidal category, where the monoidal structure is given by the tensor product action of Eq. (\ref{tensorproductaction}) and the tensor product coaction given by
\begin{align}
\delta_{V\otimes W}=(m\otimes \ide_{V\otimes W})(\ide_B\otimes \Psi_{V,B}\otimes \ide_W)(\delta_V\otimes \delta_W)=\vcenter{\hbox{
\begingroup%
  \makeatletter%
  \providecommand\color[2][]{%
    \errmessage{(Inkscape) Color is used for the text in Inkscape, but the package 'color.sty' is not loaded}%
    \renewcommand\color[2][]{}%
  }%
  \providecommand\transparent[1]{%
    \errmessage{(Inkscape) Transparency is used (non-zero) for the text in Inkscape, but the package 'transparent.sty' is not loaded}%
    \renewcommand\transparent[1]{}%
  }%
  \providecommand\rotatebox[2]{#2}%
  \ifx\svgwidth\undefined%
    \setlength{\unitlength}{30.44988496bp}%
    \ifx\svgscale\undefined%
      \relax%
    \else%
      \setlength{\unitlength}{\unitlength * \real{\svgscale}}%
    \fi%
  \else%
    \setlength{\unitlength}{\svgwidth}%
  \fi%
  \global\let\svgwidth\undefined%
  \global\let\svgscale\undefined%
  \makeatother%
  \begin{picture}(1,0.88991524)%
    \put(0,0){\includegraphics[width=\unitlength,page=1]{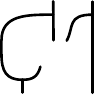}}%
    \put(0.5005321,0.41273921){\color[rgb]{0,0,0}\makebox(0,0)[lb]{\smash{$~$}}}%
    \put(0,0){\includegraphics[width=\unitlength,page=2]{tensorcoaction.pdf}}%
  \end{picture}%
\endgroup%
}}~.
\end{align}
The braiding is given by
\begin{equation}
\Psi^{\mathbf{YD}}_{V,W}=(a\otimes \ide_V)(\ide_B\otimes \Psi_{V,W})(\delta_V\otimes\ide)=\vcenter{\hbox{
\begingroup%
  \makeatletter%
  \providecommand\color[2][]{%
    \errmessage{(Inkscape) Color is used for the text in Inkscape, but the package 'color.sty' is not loaded}%
    \renewcommand\color[2][]{}%
  }%
  \providecommand\transparent[1]{%
    \errmessage{(Inkscape) Transparency is used (non-zero) for the text in Inkscape, but the package 'transparent.sty' is not loaded}%
    \renewcommand\transparent[1]{}%
  }%
  \providecommand\rotatebox[2]{#2}%
  \ifx\svgwidth\undefined%
    \setlength{\unitlength}{32.00101968bp}%
    \ifx\svgscale\undefined%
      \relax%
    \else%
      \setlength{\unitlength}{\unitlength * \real{\svgscale}}%
    \fi%
  \else%
    \setlength{\unitlength}{\svgwidth}%
  \fi%
  \global\let\svgwidth\undefined%
  \global\let\svgscale\undefined%
  \makeatother%
  \begin{picture}(1,0.77737724)%
    \put(0,0){\includegraphics[width=\unitlength]{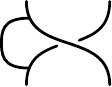}}%
  \end{picture}%
\endgroup%
}}.
\end{equation}

\begin{lemma}
If $H$ is a Hopf algebra in $\cB$, Equation (\ref{ydpic}) is equivalent either one of the following equations 
\begin{gather}\begin{split}
&(m\otimes \ide_V)(\ide_H\otimes \delta)(S\otimes a)(\Delta\otimes \ide_V)\\&=(m\otimes \ide_V)(\ide\otimes \Psi_{V,H})(\ide\otimes a\otimes\ide)(\Psi_{H,H}\otimes \ide_V\otimes S)(\ide_H\otimes (\delta\otimes \ide_H)\Psi_{H,V})(\Delta\otimes \ide_V)
\end{split}\label{altYD1}\\
\delta a=(m\otimes \ide)(\ide\otimes\Psi_{V,H})(m\otimes a\otimes \ide_H)(\ide_H\otimes \Psi_{H,H}\otimes \ide)(\Delta\otimes (\delta\otimes S)\Psi_{H,V})(\Delta\otimes \ide_V).\label{altYD2}
\end{gather}
\end{lemma}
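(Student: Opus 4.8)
The plan is to show that each of (\ref{altYD1}) and (\ref{altYD2}) is equivalent to the Yetter--Drinfeld condition (\ref{ydpic}) by \emph{whiskering} the latter with the antipode $S$ on the first tensor factor and then collapsing an antipode loop by the antipode and counit axioms; since each whiskering operation is invertible, the resulting implications go both ways. All manipulations are carried out in the graphical calculus of $\cB$, where an auxiliary $H$-strand, produced by an extra application of $\Delta$ (coassociativity), is transported across the $V$-output strand using naturality of the braiding $\Psi$. In the Sweedler-type shorthand $\Delta(h)=h_{(1)}\otimes h_{(2)}$, $\delta(v)=v^{(-1)}\otimes v^{(0)}$, Equation (\ref{ydpic}) reads $h_{(1)}v^{(-1)}\otimes h_{(2)}\triangleright v^{(0)}=(h_{(1)}\triangleright v)^{(-1)}h_{(2)}\otimes(h_{(1)}\triangleright v)^{(0)}$, and (\ref{altYD2}) is its resolved form $(h\triangleright v)^{(-1)}\otimes(h\triangleright v)^{(0)}=h_{(1)}v^{(-1)}S(h_{(3)})\otimes h_{(2)}\triangleright v^{(0)}$.

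First I would treat the equivalence (\ref{ydpic})$\Leftrightarrow$(\ref{altYD2}). Reading (\ref{ydpic}) as an equality of two morphisms $H\otimes V\to H\otimes V$, I post-compose both sides with the morphism that comultiplies the incoming $H$ once more, routes the new strand across the $V$-output via $\Psi$, applies $S$ to it, and multiplies it onto the right end of the $H$-output. Applied to the right-hand side of (\ref{ydpic}), whose first leg is $(h_{(1)}\triangleright v)^{(-1)}h_{(2)}$, this appends $S(h_{(3)})$, and the antipode axiom $m(\ide_H\otimes S)\Delta=1\varepsilon$ together with the counit axiom collapses $h_{(2)}S(h_{(3)})$ to $1\varepsilon(h_{(2)})$, leaving exactly $\delta a$, the left-hand side of (\ref{altYD2}). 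Applied to the left-hand side of (\ref{ydpic}), whose first leg is $h_{(1)}v^{(-1)}$, the same operation produces $h_{(1)}v^{(-1)}S(h_{(3)})$ in the first leg and $h_{(2)}\triangleright v^{(0)}$ in the second, which is precisely the right-hand side of (\ref{altYD2}). The converse is obtained by the inverse whiskering: append an extra $h_{(2)}$ (without $S$) to the first leg and collapse $S(h_{(3)})h_{(4)}=1\varepsilon(h_{(3)})$, recovering (\ref{ydpic}) from (\ref{altYD2}).

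Next I would establish (\ref{altYD1})$\Leftrightarrow$(\ref{altYD2}), which is cleanest to do directly. Substituting the resolved form (\ref{altYD2}) for $\delta(h_{(2)}\triangleright v)$ inside the left-hand side of (\ref{altYD1}) introduces the factor $S(h_{(1)})h_{(2)}$, which the antipode axiom $m(S\otimes\ide_H)\Delta=1\varepsilon$ and the counit axiom collapse to $1\varepsilon(h_{(1)})$; what remains is exactly the right-hand side of (\ref{altYD1}). Conversely, left-multiplying the first leg of (\ref{altYD1}) by an extra $h_{(1)}$ and collapsing $h_{(1)}S(h_{(2)})$ returns $\delta a$ on one side and the right-hand side of (\ref{altYD2}) on the other. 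Chaining these equivalences with those of the previous paragraph shows that (\ref{ydpic}), (\ref{altYD1}), and (\ref{altYD2}) are mutually equivalent whenever $H$ has an antipode.

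The routine part is the bialgebra bookkeeping (coassociativity, the module and comodule axioms), which lets the various $\Delta$'s be regrouped freely. The genuine obstacle is the braiding: in a general braided category the auxiliary $S$-strand cannot simply be slid to the far right, and one must verify that transporting it uses only naturality of $\Psi$ and that the resulting crossings reproduce \emph{exactly} the prescribed braidings $\Psi_{H,V}$, $\Psi_{H,H}$, and $\Psi_{V,H}$ that appear in the composites (\ref{altYD1}) and (\ref{altYD2}). This is the step where a careful string-diagram computation, rather than the Sweedler-style shorthand used above, is needed, since the placement of $S$ through the factor $(\delta\otimes S)\Psi_{H,V}$ and the order of the crossings must be matched term by term.
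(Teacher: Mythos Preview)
Your proposal is correct and follows essentially the same approach as the paper: the paper's proof simply states that the equations ``follow easily under use of the antipode axioms,'' which is precisely the whiskering-and-collapse maneuver you describe in detail. Your write-up is considerably more explicit than the paper's one-line sketch, and your closing caveat about verifying the braidings via string diagrams is exactly the care the paper implicitly assumes by pointing to graphical calculus.
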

\begin{proof}
These equations are depicted as 
\begin{align}\label{ydpic2}
\vcenter{\hbox{
\begingroup%
  \makeatletter%
  \providecommand\color[2][]{%
    \errmessage{(Inkscape) Color is used for the text in Inkscape, but the package 'color.sty' is not loaded}%
    \renewcommand\color[2][]{}%
  }%
  \providecommand\transparent[1]{%
    \errmessage{(Inkscape) Transparency is used (non-zero) for the text in Inkscape, but the package 'transparent.sty' is not loaded}%
    \renewcommand\transparent[1]{}%
  }%
  \providecommand\rotatebox[2]{#2}%
  \ifx\svgwidth\undefined%
    \setlength{\unitlength}{75.4507477bp}%
    \ifx\svgscale\undefined%
      \relax%
    \else%
      \setlength{\unitlength}{\unitlength * \real{\svgscale}}%
    \fi%
  \else%
    \setlength{\unitlength}{\svgwidth}%
  \fi%
  \global\let\svgwidth\undefined%
  \global\let\svgscale\undefined%
  \makeatother%
  \begin{picture}(1,0.60795871)%
    \put(0,0){\includegraphics[width=\unitlength,page=1]{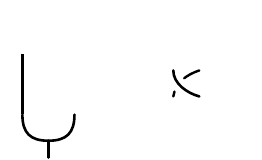}}%
    \put(0.45309773,0.25750977){\color[rgb]{0,0,0}\makebox(0,0)[lb]{\smash{$=$}}}%
    \put(0,0){\includegraphics[width=\unitlength,page=2]{ydcond3.pdf}}%
    \put(0.0333389,0.25954102){\color[rgb]{0,0,0}\makebox(0,0)[lb]{\smash{$S$}}}%
    \put(0,0){\includegraphics[width=\unitlength,page=3]{ydcond3.pdf}}%
    \put(0.81861939,0.25954102){\color[rgb]{0,0,0}\makebox(0,0)[lb]{\smash{$S$}}}%
  \end{picture}%
\endgroup%
}}~,&&\text{ and }&&
\vcenter{\hbox{
\begingroup%
  \makeatletter%
  \providecommand\color[2][]{%
    \errmessage{(Inkscape) Color is used for the text in Inkscape, but the package 'color.sty' is not loaded}%
    \renewcommand\color[2][]{}%
  }%
  \providecommand\transparent[1]{%
    \errmessage{(Inkscape) Transparency is used (non-zero) for the text in Inkscape, but the package 'transparent.sty' is not loaded}%
    \renewcommand\transparent[1]{}%
  }%
  \providecommand\rotatebox[2]{#2}%
  \ifx\svgwidth\undefined%
    \setlength{\unitlength}{66.33410942bp}%
    \ifx\svgscale\undefined%
      \relax%
    \else%
      \setlength{\unitlength}{\unitlength * \real{\svgscale}}%
    \fi%
  \else%
    \setlength{\unitlength}{\svgwidth}%
  \fi%
  \global\let\svgwidth\undefined%
  \global\let\svgscale\undefined%
  \makeatother%
  \begin{picture}(1,0.57808271)%
    \put(0.2459148,0.2896886){\color[rgb]{0,0,0}\makebox(0,0)[lb]{\smash{ }}}%
    \put(0,0){\includegraphics[width=\unitlength,page=1]{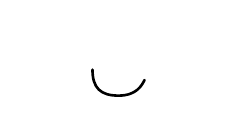}}%
    \put(0.17441893,0.25825717){\color[rgb]{0,0,0}\makebox(0,0)[lb]{\smash{$=$}}}%
    \put(0,0){\includegraphics[width=\unitlength,page=2]{ydcond2.pdf}}%
    \put(0.79369132,0.24926118){\color[rgb]{0,0,0}\makebox(0,0)[lb]{\smash{$S$}}}%
  \end{picture}%
\endgroup%
}}~.
\end{align}
using graphical calculus, and follow easily under use of the antipode axioms.
\end{proof}

The category of Yetter--Drinfeld module is a realization of a version of the center of the monoidal category $\lmod{B}$, relative to $\cB$, the \emph{relative monoidal center} (cf. \cites{Lau,Lau2}).

\subsection{The Tensor Product Action}

The following first main theorem of this paper states that we can tensor (using the tensor product of $\cB$) objects of $\lmod{A}(\lcomod{B})$ with objects of $\lYD{B}$ on the left and obtain modules in $\lmod{A}(\lcomod{B})$.

\begin{theorem}\label{mainthm}
Let $A$ be an algebra object in $\lcomod{B}$. Given objects $(V,a_V,\delta_V)$ in $\lYD{B}$, and an object $(W,a_W,\delta_W)$ in $\lmod{A}(\lcomod{B})$, their tensor product $V\otimes W$ becomes an object $V\triangleright W$ of $\lmod{A}(\lcomod{B})$ with $A$-action $a_{V\triangleright W}$ and $B$-coaction $\delta_{V\triangleright W}$ given by
\begin{align}\label{Aactioneq}
a_{V\triangleright W}&:=(a_V\otimes a_W)(\ide_B\otimes \Psi_{A,V}\otimes \ide_W)(\delta_A\otimes \ide_{V\otimes W})=\vcenter{\hbox{
\begingroup%
  \makeatletter%
  \providecommand\color[2][]{%
    \errmessage{(Inkscape) Color is used for the text in Inkscape, but the package 'color.sty' is not loaded}%
    \renewcommand\color[2][]{}%
  }%
  \providecommand\transparent[1]{%
    \errmessage{(Inkscape) Transparency is used (non-zero) for the text in Inkscape, but the package 'transparent.sty' is not loaded}%
    \renewcommand\transparent[1]{}%
  }%
  \providecommand\rotatebox[2]{#2}%
  \ifx\svgwidth\undefined%
    \setlength{\unitlength}{27.19538815bp}%
    \ifx\svgscale\undefined%
      \relax%
    \else%
      \setlength{\unitlength}{\unitlength * \real{\svgscale}}%
    \fi%
  \else%
    \setlength{\unitlength}{\svgwidth}%
  \fi%
  \global\let\svgwidth\undefined%
  \global\let\svgscale\undefined%
  \makeatother%
  \begin{picture}(1,0.99123827)%
    \put(0,0){\includegraphics[width=\unitlength,page=1]{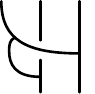}}%
    \put(0.44076032,0.50448358){\color[rgb]{0,0,0}\makebox(0,0)[lb]{\smash{$~$}}}%
  \end{picture}%
\endgroup%
}},\\
\delta_{V\triangleright W}&:=(m_B\otimes \ide_{V\otimes W})(\ide_B\otimes \Psi_{V,B}\otimes \ide_W)(\delta_V\otimes \delta_W)=\vcenter{\hbox{}}.
\end{align}
\end{theorem}
\begin{proof} It follows directly from $A$ being a $B$-comodule algebra that $a_{V\triangleright W}$ defines an $A$-action. We need to check that it is a morphism of $B$-comodules.
A proof of this, using graphical calculus, is given in Fig. \ref{Fig1}.
\begin{figure}[bt]
\[\vcenter{\hbox{\import{Graphics/}{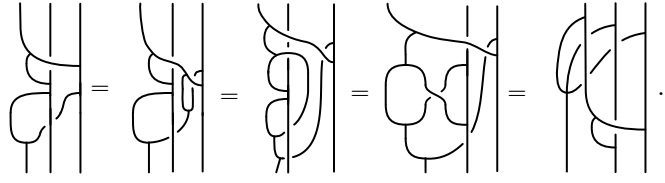}}}.\]
\caption{Proof of Theorem \ref{mainthm}}
\label{Fig1}
\end{figure}
The first step uses that $A$ is a $B$-comodule algebra, see Eq. (\ref{comodalg}). The second equality uses coassociativity of the coproduct of $B$ an that $\delta$ is a coaction. Next, we apply the Yetter--Drinfeld compatibility condition depicted in Eq. (\ref{ydpic}), and finally, coassociativity and the comodule axioms are used again to conclude this proof that $V\triangleright W$ is an object in $\lmod{A}(\lcomod{B})$.

Alternatively, the proof can be given using the notation of composition and tensor products of maps:
\begin{align*}
\delta_{V\triangleright W} a_{V\triangleright W}=&(m_B\otimes\ide)(\ide_B\otimes \Psi_{V,B}\otimes a_W)(\delta_V a_V\otimes\delta_{A\otimes W})(\ide_B\otimes\Psi_{A,V}\otimes \ide_W)(\delta_A\otimes \ide_{V\otimes W})\\
=&(m_B\otimes\ide)(m_B\otimes \Psi_{V,B}\otimes \ide_W)(\ide_B\otimes \Psi_{V,B}\otimes \ide_{B\otimes W})(\delta a_V\otimes \ide_{B\otimes B}\otimes a_W)\\&(\ide_B\otimes \Psi_{B,V}\otimes \Psi_{A,B}\otimes \ide_W)(\Delta\otimes \Psi_{A,V}\otimes \delta_W)(\delta_A\otimes \ide_{V\otimes W})
\\
=&(m_B\otimes \ide)(\ide_B\otimes \Psi_{V,B}\otimes \ide_W)(m_B\otimes a_W\otimes \ide_{B\otimes W})(\ide_B\otimes \Psi_{B,B}\otimes \ide_{V\otimes B}\otimes a_W)\\&(\Delta\otimes \delta_V\otimes \Psi_{A,B}\otimes \ide_W)(\ide_B\otimes \Psi_{A,V}\otimes \delta_W)(\delta_A\otimes \ide_{V\otimes W})\\
=&(\ide_B\otimes a_{V\triangleright W})\delta_{A\otimes V\otimes W}.\qedhere
\end{align*}
\end{proof}

The following theorem gives a version of Theorem \ref{mainthm} for $B$-module algebras which is proved in an analogous way. A right $B$-module algebra in $\cB$ is an algebra object in the category $\rmod{B}$. That is, $A$ has a right $B$-module structure $a_A\colon A\otimes B\to A$, a multiplication $m_A\colon A\otimes A \to A$, and a unit $1_A\colon I\to A$, such that
\begin{gather}
a_A(m_A\otimes \ide_B)=m_A a_{A\otimes A}, \\
a_A(1_A\otimes \ide_B)=1_A\varepsilon.
\end{gather}
These conditions can be displayed as
\begin{equation}\label{modalg}\vcenter{\hbox{
\begingroup%
  \makeatletter%
  \providecommand\color[2][]{%
    \errmessage{(Inkscape) Color is used for the text in Inkscape, but the package 'color.sty' is not loaded}%
    \renewcommand\color[2][]{}%
  }%
  \providecommand\transparent[1]{%
    \errmessage{(Inkscape) Transparency is used (non-zero) for the text in Inkscape, but the package 'transparent.sty' is not loaded}%
    \renewcommand\transparent[1]{}%
  }%
  \providecommand\rotatebox[2]{#2}%
  \ifx\svgwidth\undefined%
    \setlength{\unitlength}{130.8750375bp}%
    \ifx\svgscale\undefined%
      \relax%
    \else%
      \setlength{\unitlength}{\unitlength * \real{\svgscale}}%
    \fi%
  \else%
    \setlength{\unitlength}{\svgwidth}%
  \fi%
  \global\let\svgwidth\undefined%
  \global\let\svgscale\undefined%
  \makeatother%
  \begin{picture}(1,0.29252606)%
    \put(0.18194866,0.17603019){\color[rgb]{0,0,0}\makebox(0,0)[lb]{\smash{ }}}%
    \put(0.2030175,0.11425389){\color[rgb]{0,0,0}\makebox(0,0)[lb]{\smash{$=$}}}%
    \put(0,0){\includegraphics[width=\unitlength,page=1]{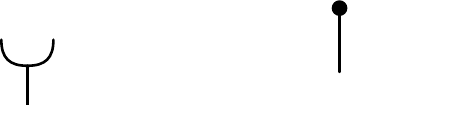}}%
    \put(0.60447656,0.10858129){\color[rgb]{0,0,0}\makebox(0,0)[lb]{\smash{$,$}}}%
    \put(0,0){\includegraphics[width=\unitlength,page=2]{modalg.pdf}}%
    \put(0.83338981,0.11466382){\color[rgb]{0,0,0}\makebox(0,0)[lb]{\smash{$=$}}}%
    \put(0,0){\includegraphics[width=\unitlength,page=3]{modalg.pdf}}%
  \end{picture}%
\endgroup%
}}.\end{equation}
where $\varepsilon=\vcenter{\hbox{
\begingroup%
  \makeatletter%
  \providecommand\color[2][]{%
    \errmessage{(Inkscape) Color is used for the text in Inkscape, but the package 'color.sty' is not loaded}%
    \renewcommand\color[2][]{}%
  }%
  \providecommand\transparent[1]{%
    \errmessage{(Inkscape) Transparency is used (non-zero) for the text in Inkscape, but the package 'transparent.sty' is not loaded}%
    \renewcommand\transparent[1]{}%
  }%
  \providecommand\rotatebox[2]{#2}%
  \ifx\svgwidth\undefined%
    \setlength{\unitlength}{4.8bp}%
    \ifx\svgscale\undefined%
      \relax%
    \else%
      \setlength{\unitlength}{\unitlength * \real{\svgscale}}%
    \fi%
  \else%
    \setlength{\unitlength}{\svgwidth}%
  \fi%
  \global\let\svgwidth\undefined%
  \global\let\svgscale\undefined%
  \makeatother%
  \begin{picture}(1,2.56889521)%
    \put(0,0){\includegraphics[width=\unitlength]{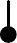}}%
  \end{picture}%
\endgroup%
}}$ denotes the counit.

\begin{theorem}\label{mainthm2}
Let $A$ be a right $B$-module algebra. Given objects $(V,a_V,\delta_V)$ in $\rYD{B}$, and $(W,a_W,b_W)$ in $\lmod{A}(\rmod{B})$, where $a_W\colon A\otimes W\to W$ and $b_W\colon W\otimes B\to W$ are $A$-, respectively $B$-actions, their tensor product $V\otimes W$ becomes an object $V\triangleright W$ of $\lmod{A}(\rmod{B})$ with $A$-action $a_{V\triangleright W}$ and $B$-action $b_{V\triangleright W}$ given by
\begin{align}
a_{V\triangleright W}&:=(\ide_{V}\otimes a_W)(\ide_{V}\otimes b_A\otimes \ide_W)(\Psi_{A,V}\otimes \ide_{B\otimes W})(\ide_A\otimes \delta_V\otimes \ide_W)=\vcenter{\hbox{
\begingroup%
  \makeatletter%
  \providecommand\color[2][]{%
    \errmessage{(Inkscape) Color is used for the text in Inkscape, but the package 'color.sty' is not loaded}%
    \renewcommand\color[2][]{}%
  }%
  \providecommand\transparent[1]{%
    \errmessage{(Inkscape) Transparency is used (non-zero) for the text in Inkscape, but the package 'transparent.sty' is not loaded}%
    \renewcommand\transparent[1]{}%
  }%
  \providecommand\rotatebox[2]{#2}%
  \ifx\svgwidth\undefined%
    \setlength{\unitlength}{27.19538815bp}%
    \ifx\svgscale\undefined%
      \relax%
    \else%
      \setlength{\unitlength}{\unitlength * \real{\svgscale}}%
    \fi%
  \else%
    \setlength{\unitlength}{\svgwidth}%
  \fi%
  \global\let\svgwidth\undefined%
  \global\let\svgscale\undefined%
  \makeatother%
  \begin{picture}(1,0.99123827)%
    \put(0,0){\includegraphics[width=\unitlength,page=1]{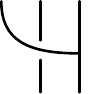}}%
    \put(0.44076032,0.50448358){\color[rgb]{0,0,0}\makebox(0,0)[lb]{\smash{$~$}}}%
    \put(0,0){\includegraphics[width=\unitlength,page=2]{Aaction2.pdf}}%
  \end{picture}%
\endgroup%
}},\\
b_{V\triangleright W}&:=(b_V\otimes b_W)(\ide_V\otimes \Psi_{W,B}\otimes \ide_B)(\ide_{V\otimes W}\otimes \Delta_B)=\vcenter{\hbox{
\begingroup%
  \makeatletter%
  \providecommand\color[2][]{%
    \errmessage{(Inkscape) Color is used for the text in Inkscape, but the package 'color.sty' is not loaded}%
    \renewcommand\color[2][]{}%
  }%
  \providecommand\transparent[1]{%
    \errmessage{(Inkscape) Transparency is used (non-zero) for the text in Inkscape, but the package 'transparent.sty' is not loaded}%
    \renewcommand\transparent[1]{}%
  }%
  \providecommand\rotatebox[2]{#2}%
  \ifx\svgwidth\undefined%
    \setlength{\unitlength}{30.44989059bp}%
    \ifx\svgscale\undefined%
      \relax%
    \else%
      \setlength{\unitlength}{\unitlength * \real{\svgscale}}%
    \fi%
  \else%
    \setlength{\unitlength}{\svgwidth}%
  \fi%
  \global\let\svgwidth\undefined%
  \global\let\svgscale\undefined%
  \makeatother%
  \begin{picture}(1,0.88991508)%
    \put(0,0){\includegraphics[width=\unitlength,page=1]{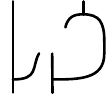}}%
    \put(0.49946781,0.47717655){\color[rgb]{0,0,0}\rotatebox{-180}{\makebox(0,0)[lb]{\smash{$~$}}}}%
    \put(0,0){\includegraphics[width=\unitlength,page=2]{tensoractionr.pdf}}%
  \end{picture}%
\endgroup%
}}.
\end{align}
\end{theorem}

Theorems \ref{mainthm} and \ref{mainthm2} in fact provide categorical modules in the sense of \cite{EGNO}*{Definition 7.1.1}:

\begin{corollary}\label{catmodules}
Let $A$ be an algebra object in $\lcomod{B}$ and $D$ an algebra object in $\rmod{B}$.
\begin{enumerate}
\item[(i)] The category $\lmod{A}(\lcomod{B})$ is a left module over the monoidal category $\lYD{B}$. That is, 
\begin{align*}
\triangleright\colon \lYD{B}\times \lmod{A}(\lcomod{B})&\longrightarrow\lmod{A}(\lcomod{B}), &(V,W)\longmapsto V\triangleright W
\end{align*}
extends to a $\Bbbk$-bilinear functor such that $1\triangleright V\cong V$ and there is a natural isomorphism 
$$(V_1\otimes V_2)\triangleright W\cong V_1\triangleright(V_2\triangleright W),$$
for two objects $V_1,V_2$ of $\lYD{B}$.
\item[(ii)] Similarly, $\lmod{D}(\rmod{B})$ is a left categorical module over $\rYD{B}$ via the $\Bbbk$-bilinear functor
\begin{align*}
\triangleright\colon \rYD{B}\times \lmod{D}(\rmod{B})&\longrightarrow\lmod{D}(\rmod{B}), &(V,W)\longmapsto V\triangleright W.
\end{align*}
\end{enumerate}
\end{corollary}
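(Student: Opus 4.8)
The plan is to verify the axioms of a left module category in the sense of \cite{EGNO}*{Definition 7.1.1}, taking the associativity and unit constraints of the action $\triangleright$ to be \emph{inherited} from the associator $\alpha$ and the left unitor of the ambient braided category $\cB$. Since Theorem \ref{mainthm} already shows that $V\triangleright W$ lies in $\lmod{A}(\lcomod{B})$ on objects, the first task is bifunctoriality: for morphisms $f\colon V_1\to V_2$ in $\lYD{B}$ and $g\colon W_1\to W_2$ in $\lmod{A}(\lcomod{B})$, I set $f\triangleright g:=f\otimes g$ and check that $f\otimes g$ intertwines both the $A$-actions $a_{V_i\triangleright W_j}$ and the $B$-coactions $\delta_{V_i\triangleright W_j}$ of Theorem \ref{mainthm}. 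This is immediate from the explicit formulas together with naturality of the braiding $\Psi$ and the hypotheses that $f$ is a morphism of $B$-(co)modules while $g$ is a morphism of $A$-modules and $B$-comodules.

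The heart of the proof is to show that the associator $\alpha_{V_1,V_2,W}\colon (V_1\otimes V_2)\otimes W\to V_1\otimes(V_2\otimes W)$ of $\cB$ is a morphism in $\lmod{A}(\lcomod{B})$ from $(V_1\otimes V_2)\triangleright W$ to $V_1\triangleright(V_2\triangleright W)$. For the $B$-coaction this is automatic: the coaction $\delta_{V\triangleright W}$ of Theorem \ref{mainthm} is literally the tensor product coaction of Eq.~(\ref{tensorproductaction})'s coalgebra counterpart, so its compatibility with $\alpha$ is exactly the associativity of the tensor product coaction, which is the statement that $\lcomod{B}$ is monoidal (underlying \cite{Bes}*{Theorem 3.4.3}). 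For the $A$-action the verification is the main computation: expanding $a_{(V_1\otimes V_2)\triangleright W}$ using the tensor product action $a_{V_1\otimes V_2}$ of Eq.~(\ref{tensorproductaction}) and comparing with the iterated action $a_{V_1\triangleright(V_2\triangleright W)}$, one rewrites $\Delta$ applied to the $B$-output of $\delta_A$ by the coassociativity of the coaction, $(\Delta\otimes\ide_A)\delta_A=(\ide_B\otimes\delta_A)\delta_A$, and resolves the braiding $\Psi_{A,V_1\otimes V_2}$ into $(\ide_{V_1}\otimes\Psi_{A,V_2})(\Psi_{A,V_1}\otimes\ide_{V_2})$ via the hexagon axiom. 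After these two substitutions both sides coincide, up to the associators of $\cB$, which I suppress by MacLane coherence. This is the step I expect to be the main obstacle, since it is where the comultiplication coming from the monoidal structure of $\lYD{B}$ and the coaction $\delta_A$ coming from the comodule-algebra structure must interact correctly; a graphical-calculus argument in the style of Figure \ref{Fig1} makes it transparent.

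For the unit constraint, the monoidal unit of $\lYD{B}$ is the object $I$ equipped with the trivial action $\varepsilon$ and trivial coaction $1_B$. Substituting $V=I$ into the formulas of Theorem \ref{mainthm}, the action $a_{I\triangleright W}$ collapses to $a_W$ by the counit axiom $(\varepsilon\otimes\ide_A)\delta_A=\ide_A$ for the coaction on $A$, while $\delta_{I\triangleright W}$ collapses to $\delta_W$ using $m_B(1_B\otimes\ide_B)=\ide_B$; hence the left unitor of $\cB$ furnishes the required isomorphism $I\triangleright W\cong W$ in $\lmod{A}(\lcomod{B})$. Finally, the pentagon and triangle axioms for the module category reduce to the pentagon and triangle axioms of $\cB$ itself, because every structure isomorphism of $\triangleright$ is the corresponding constraint of $\cB$, so no further coherence check is needed. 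Part (ii) is proved in the same manner, replacing Theorem \ref{mainthm} by Theorem \ref{mainthm2}, the coaction $\delta_A$ by the right module structure $b_A$, and the coassociativity of the coaction by the coassociativity of $\Delta_B$ in the right $B$-module setting, with the analogous hexagon identities for $\rYD{B}$.
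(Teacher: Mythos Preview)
Your proposal is correct and in fact supplies the explicit verification that the paper omits entirely: the paper states the corollary as an immediate consequence of Theorems~\ref{mainthm} and~\ref{mainthm2} without giving any proof. Your argument---taking the module-category constraints to be the underlying associator and unitor of $\cB$, checking that these are morphisms in $\lmod{A}(\lcomod{B})$ via coassociativity of $\delta_A$ and the hexagon identity, and reducing the coherence axioms to those of $\cB$---is exactly the routine check the reader is implicitly expected to perform, so there is no discrepancy in approach.
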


\begin{remark}
Note that the categorical actions in Corollary \ref{catmodules} are instances of a categorical construction of modules over the relative monoidal center. This point of view is explored further in \cite{Lau2}.
\end{remark}

For a version of Theorem \ref{mainthm2} valid for a right $B$-module algebra, we need the following lemma about right-left translation of Yetter--Drinfeld modules. We then obtain a version of Theorem \ref{mainthm2} for \emph{left} $B$-module algebras.

For this, denote by $\leftexp{\cop}{B}$ the bialgebra in $\overline{\cB}$ with the same product  $m_B$ as $B$ but opposite coproduct $\Psi^{-1}_{B,B}\Delta_B$. If $B$ is a Hopf algebra, then $\leftexp{\cop}{B}$ is a Hopf algebra with antipode is $S^{-1}$.

\begin{lemma}\label{rightleft}Let $B$ be a Hopf algebra in $\cB$.
There is an equivalence of braided monoidal categories
\begin{align}
\rYD{B}(\cB) &\longrightarrow \lYD{\leftexp{\cop}{B}}(\overline{\cB}), &(V,a,\delta)\longmapsto (V, a', \delta'),
\end{align}
where 
\begin{align*}
a'&=a\Psi^{-1}_{B,V}(S^{-1}\otimes \ide_V),&
\delta'&=\Psi^{-1}_{V,B}\delta.
\end{align*}
\end{lemma}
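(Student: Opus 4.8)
The functor is the identity on underlying objects of $\cB$, so the entire content lies in checking that the reversed structures $(a',\delta')$ form a left Yetter--Drinfeld module over $\leftexp{\cop}{B}$ in $\overline{\cB}$, that the assignment is functorial and invertible, and that it respects the braided monoidal structures. The plan is to proceed axiom by axiom, using two standard facts about a braided Hopf algebra: that $S$, hence $S^{-1}$, is a braided anti-homomorphism of both the algebra and the coalgebra structure, and that $S^{-1}$ is such an anti-homomorphism with respect to the \emph{inverse} braiding $\Psi^{-1}$. Since $\Psi^{-1}$ is precisely the braiding of $\overline{\cB}$, every identity involving $S^{-1}$ becomes a ``straight'' computation internal to $\overline{\cB}$, which is the structural reason the statement is phrased over $\overline{\cB}$ and $\leftexp{\cop}{B}$.

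First I would verify that $a'=a\Psi^{-1}_{B,V}(S^{-1}\otimes\ide_V)$ is a left $\leftexp{\cop}{B}$-module action in $\overline{\cB}$. The unit axiom is immediate from $S^{-1}1_B=1_B$ and naturality of the braiding. Compatibility with the product $m_B$ of $\leftexp{\cop}{B}$ reduces, after sliding the antipodes through the crossings by naturality and the hexagon axioms, to $S^{-1}$ being anti-multiplicative in the braided sense; this is the familiar computation turning a right action into a left one via the antipode, now carried out graphically in $\overline{\cB}$. Dually, I would check that $\delta'=\Psi^{-1}_{V,B}\delta$ is a left $\leftexp{\cop}{B}$-comodule coaction: the counit axiom holds because $\varepsilon$ slides trivially through the braiding, and left coassociativity with respect to the co-opposite coproduct $\Psi^{-1}_{B,B}\Delta_B$ follows from right coassociativity of $\delta$ together with the hexagon identity for $\Psi$ (this is exactly where the co-opposite coproduct, and hence $\overline{\cB}$, is forced upon us).

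The main step, and the one I expect to demand the most care, is the Yetter--Drinfeld compatibility. Here I would begin from the right-handed mirror of condition (\ref{YDcond}) for $(V,a,\delta)$ in $\cB$ and transform it into the left-handed condition (\ref{YDcond}) for $(V,a',\delta')$ over $\leftexp{\cop}{B}$ in $\overline{\cB}$. The transformation is most transparent through antipode-laden reformulations of the Yetter--Drinfeld condition analogous to those of the preceding Lemma (Eqs. (\ref{altYD1})--(\ref{altYD2})), which put the condition in a form adapted to the reversal; one then repeatedly applies the antipode axioms, the braided anti-(co)multiplicativity of $S^{-1}$, and naturality and the hexagon to bring every strand into the configuration dictated by the left YD condition in $\overline{\cB}$. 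I would present this as a chain of graphical identities, since the bookkeeping of crossings is exactly what makes the plain-composition form unwieldy. The delicate points are tracking which braiding, $\Psi$ or $\Psi^{-1}$, occurs at each crossing, and arranging that each inserted $S^{-1}$ is cancelled against an $S$ produced by an antipode axiom.

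Finally I would establish that the functor is an equivalence and braided monoidal. For invertibility I would observe that the evident left-to-right analogue of the construction, obtained by replacing $S^{-1}$ with $S$ and $\Psi^{-1}$ with $\Psi$, yields a functor in the opposite direction; since $\overline{\overline{\cB}}=\cB$, $\leftexp{\cop}{(\leftexp{\cop}{B})}=B$ because $\Psi_{B,B}\Psi^{-1}_{B,B}\Delta_B=\Delta_B$, and $(S^{-1})^{-1}=S$, the two composites are the identity on objects and on morphisms. Thus the functor is in fact an isomorphism of categories, in particular an equivalence; on morphisms it is a bijection because $a'$ and $\delta'$ are built from $a$ and $\delta$ by the invertible maps $S^{-1}$ and the braiding. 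For the braided monoidal structure I would compare, at the level of objects, the tensor-product action and coaction on $\rYD{B}(\cB)$ with those on $\lYD{\leftexp{\cop}{B}}(\overline{\cB})$ formed using $\Psi^{-1}$ (the formulas following Definition \ref{YDmodules}), verifying that $V\triangleright W$ and its image carry the same data, and likewise that the Yetter--Drinfeld braiding $\Psi^{\mathbf{YD}}$ is preserved. These last checks are again applications of naturality and the hexagon, and are routine once the object-level identities above are in place.
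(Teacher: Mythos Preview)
Your proposal is correct and follows essentially the same approach as the paper: verify separately that $(V,a')$ is a left $\leftexp{\cop}{B}$-module and $(V,\delta')$ a left $\leftexp{\cop}{B}$-comodule in $\overline{\cB}$, then establish the Yetter--Drinfeld compatibility via the antipode-laden reformulation of Equation~(\ref{altYD1}). The paper's proof is terser---it simply asserts the monoidal functors $\rmod{B}(\cB)\to\lmod{\leftexp{\cop}{B}}(\overline{\cB})$ and $\rcomod{B}(\cB)\to\lcomod{\leftexp{\cop}{B}}(\overline{\cB})$ and points to Equation~(\ref{altYD1}) for the compatibility---whereas you spell out the inverse functor and the preservation of the braided monoidal structure in more detail, but the underlying strategy is the same.
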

\begin{proof}
We first verify that given a right $B$-module $V$, with action $a$, $(V,a')$ as above is a left $\leftexp{\cop}{B}$-module, and the tensor product is given using the opposite coproduct $(\Psi^{-1}_{B,B})\Delta_B$ (in the category $\overline{\cB}$). This provides a monoidal functor $\rmod{B}(\cB)\to \lmod{\leftexp{\cop}{B}}(\overline{\cB})$. Similarly, $(V,\delta)\mapsto (V,\delta')$ gives a monoidal functor $\rcomod{B}(\cB)\to \lcomod{\leftexp{\cop}{B}}(\overline{\cB})$. Finally, given a right YD-module over $B$, the left $\leftexp{\cop}{B}$-module and comodule structures are again YD-compatible, but using the braiding of $\ov{\cB}$. To show this, it helps to use Equation (\ref{altYD1}) for the Hopf algebra $\leftexp{\cop}{B}$ in $\ov{\cB}$.
\end{proof}

\begin{corollary}\label{maincor}Assume $B$ is a Hopf algebra in $\cB$.
Let $A$ be an algebra object in $\lmod{B}$. Given two objects $(V,a_V,\delta_V)$ in $\lYD{B}$, and $(W,a_W,b_W)$ in $\lmod{A}(\lmod{B})$, where $a_W\colon A\otimes W\to W$ and $b_W\colon B\otimes W\to W$ are $A$-, respectively $B$-actions, their tensor product $V\otimes W$ becomes an object $V\triangleright W$ of $\lmod{A}(\lmod{B})$ with $A$-action $a_{V\triangleright W}$ and $B$-action $b_{V\triangleright W}$ given by
\begin{align}
\begin{split}
a_{V\triangleright W}&:=(\ide\otimes a_W)(\Psi^{-1}_{V,A}\otimes\ide_W)(a_V\Psi^{-1}_{B,A}\otimes \ide_{B\otimes W})(\ide_A\otimes (S^{-1}\otimes \ide_V)\delta_V\otimes \ide_W)\\&=\vcenter{\hbox{
\begingroup%
  \makeatletter%
  \providecommand\color[2][]{%
    \errmessage{(Inkscape) Color is used for the text in Inkscape, but the package 'color.sty' is not loaded}%
    \renewcommand\color[2][]{}%
  }%
  \providecommand\transparent[1]{%
    \errmessage{(Inkscape) Transparency is used (non-zero) for the text in Inkscape, but the package 'transparent.sty' is not loaded}%
    \renewcommand\transparent[1]{}%
  }%
  \providecommand\rotatebox[2]{#2}%
  \ifx\svgwidth\undefined%
    \setlength{\unitlength}{75.26778031bp}%
    \ifx\svgscale\undefined%
      \relax%
    \else%
      \setlength{\unitlength}{\unitlength * \real{\svgscale}}%
    \fi%
  \else%
    \setlength{\unitlength}{\svgwidth}%
  \fi%
  \global\let\svgwidth\undefined%
  \global\let\svgscale\undefined%
  \makeatother%
  \begin{picture}(1,0.51829191)%
    \put(0,0){\includegraphics[width=\unitlength,page=1]{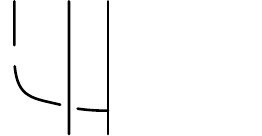}}%
    \put(0.26886153,0.28263354){\color[rgb]{0,0,0}\makebox(0,0)[lb]{\smash{$~$}}}%
    \put(0,0){\includegraphics[width=\unitlength,page=2]{Aaction3.pdf}}%
    \put(0.0833371,0.35031365){\color[rgb]{0,0,0}\makebox(0,0)[lb]{\smash{\tiny{$S^{\text{-}1}$}}}}%
    \put(0.46379542,0.25217375){\color[rgb]{0,0,0}\makebox(0,0)[lb]{\smash{$,$}}}%
  \end{picture}%
\endgroup%
}}\end{split}\\
b_{V\triangleright W}&:=(b_V\otimes b_W)(\ide_B\otimes \Psi_{B,V}\otimes \ide_W)(\Delta_B\otimes \ide_{V\otimes W})=\vcenter{\hbox{}}.
\end{align}
\end{corollary}
\begin{proof}
This follows from Theorem \ref{mainthm2} under Lemma \ref{rightleft}.
\end{proof}

Theorem \ref{mainthm} and \ref{mainthm2} can further be dualized to give a version for (co)module coalgebras. Graphically, dualizing corresponds to rotating a picture by 180 degrees in the horizontal middle axis. From this, formulas for the tensor product can be derived.

\begin{corollary}\label{coalgebracor}Let $B$ be a bialgebra in $\cB$.
\begin{itemize}
\item[(i)]
Let $C$ be an coalgebra object in $\lmod{B}$, with $B$-action $a_C\colon B\otimes C\to C$. Given objects $(V,a_V,\delta_V)$ in $\lYD{B}$, and $(W,\delta_W,a_W)$ in $\lcomod{C}(\lmod{B})$, then $V\otimes W$ becomes an object $V\triangleright W$ of $\lcomod{C}(\lmod{B})$ with $C$-coaction $\delta_{V\triangleright W}$ and $B$-action $a_{V\triangleright W}$ given by
\begin{align}
\delta_{V\triangleright W}&:=(a_C\otimes \ide_{V\otimes W})(\ide_B\otimes \Psi_{V,C}\otimes \ide_W)(\delta_V\otimes \delta_W)=\vcenter{\hbox{
\begingroup%
  \makeatletter%
  \providecommand\color[2][]{%
    \errmessage{(Inkscape) Color is used for the text in Inkscape, but the package 'color.sty' is not loaded}%
    \renewcommand\color[2][]{}%
  }%
  \providecommand\transparent[1]{%
    \errmessage{(Inkscape) Transparency is used (non-zero) for the text in Inkscape, but the package 'transparent.sty' is not loaded}%
    \renewcommand\transparent[1]{}%
  }%
  \providecommand\rotatebox[2]{#2}%
  \ifx\svgwidth\undefined%
    \setlength{\unitlength}{26.83653035bp}%
    \ifx\svgscale\undefined%
      \relax%
    \else%
      \setlength{\unitlength}{\unitlength * \real{\svgscale}}%
    \fi%
  \else%
    \setlength{\unitlength}{\svgwidth}%
  \fi%
  \global\let\svgwidth\undefined%
  \global\let\svgscale\undefined%
  \makeatother%
  \begin{picture}(1,1.00449309)%
    \put(0,0){\includegraphics[width=\unitlength,page=1]{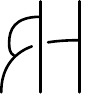}}%
    \put(0.43328217,0.26369901){\color[rgb]{0,0,0}\makebox(0,0)[lb]{\smash{$~$}}}%
  \end{picture}%
\endgroup%
}},\\
a_{V\triangleright W}&:=(a_V\otimes a_W)(\ide_B\otimes \Psi_{B,V}\otimes \ide_W)(\Delta_B\otimes \ide_{V\otimes W})=\vcenter{\hbox{}}~.
\end{align}
\item[(ii)]
Let $C$ be an coalgebra object in $\rcomod{B}$, with $B$-coaction $\delta_C\colon C\to C\otimes B$. Given objects $(V,a_V,\delta_V)$ in $\rYD{B}$, and and object $(W,\gamma_W,\delta_W)$ in $\lcomod{C}(\rcomod{B})$, where $\gamma_W\colon W\to C\otimes W $ and $\delta_W\colon W\to W\otimes B$ are $C$, and $B$-coactions, then $V\otimes W$ is an object $V\triangleright W$ of $\lcomod{C}(\rcomod{B})$ with $C$-coaction $\gamma_{V\triangleright W}$ and $B$-coaction $\delta_{V\triangleright W}$ given by
\begin{align}
\gamma_{V\triangleright W}&:=(\ide_{C}\otimes a_V\otimes \ide_W)(\Psi_{V,C}\otimes \ide_{B\otimes W})(\ide_V\otimes \delta_{C}\otimes \ide_{W})(\ide_V\otimes \gamma_W)=\vcenter{\hbox{
\begingroup%
  \makeatletter%
  \providecommand\color[2][]{%
    \errmessage{(Inkscape) Color is used for the text in Inkscape, but the package 'color.sty' is not loaded}%
    \renewcommand\color[2][]{}%
  }%
  \providecommand\transparent[1]{%
    \errmessage{(Inkscape) Transparency is used (non-zero) for the text in Inkscape, but the package 'transparent.sty' is not loaded}%
    \renewcommand\transparent[1]{}%
  }%
  \providecommand\rotatebox[2]{#2}%
  \ifx\svgwidth\undefined%
    \setlength{\unitlength}{27.21565341bp}%
    \ifx\svgscale\undefined%
      \relax%
    \else%
      \setlength{\unitlength}{\unitlength * \real{\svgscale}}%
    \fi%
  \else%
    \setlength{\unitlength}{\svgwidth}%
  \fi%
  \global\let\svgwidth\undefined%
  \global\let\svgscale\undefined%
  \makeatother%
  \begin{picture}(1,0.99050017)%
    \put(0,0){\includegraphics[width=\unitlength,page=1]{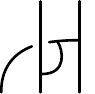}}%
    \put(0.44117674,0.29397986){\color[rgb]{0,0,0}\makebox(0,0)[lb]{\smash{$~$}}}%
  \end{picture}%
\endgroup%
}},\\
\delta_{V\triangleright W}&:=(\ide_{V\otimes W}\otimes m_B)(\ide_V\otimes \Psi_{B,W}\otimes \ide_B)(\delta_V\otimes \delta_W)=\vcenter{\hbox{
\begingroup%
  \makeatletter%
  \providecommand\color[2][]{%
    \errmessage{(Inkscape) Color is used for the text in Inkscape, but the package 'color.sty' is not loaded}%
    \renewcommand\color[2][]{}%
  }%
  \providecommand\transparent[1]{%
    \errmessage{(Inkscape) Transparency is used (non-zero) for the text in Inkscape, but the package 'transparent.sty' is not loaded}%
    \renewcommand\transparent[1]{}%
  }%
  \providecommand\rotatebox[2]{#2}%
  \ifx\svgwidth\undefined%
    \setlength{\unitlength}{30.44989059bp}%
    \ifx\svgscale\undefined%
      \relax%
    \else%
      \setlength{\unitlength}{\unitlength * \real{\svgscale}}%
    \fi%
  \else%
    \setlength{\unitlength}{\svgwidth}%
  \fi%
  \global\let\svgwidth\undefined%
  \global\let\svgscale\undefined%
  \makeatother%
  \begin{picture}(1,0.88991508)%
    \put(0,0){\includegraphics[width=\unitlength,page=1]{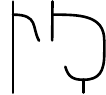}}%
    \put(0.49946781,0.41273914){\color[rgb]{0,0,0}\rotatebox{-180}{\makebox(0,0)[lb]{\smash{$~$}}}}%
    \put(0,0){\includegraphics[width=\unitlength,page=2]{tensorcoactionr.pdf}}%
  \end{picture}%
\endgroup%
}}.
\end{align}
\end{itemize}
\end{corollary}

Hence, similarly to before, there are categorical actions
\begin{align*}
\triangleright\colon &\lYD{B}\times \lcomod{C}(\lmod{B})\longrightarrow\lcomod{C}(\lmod{B}),\\
\triangleright\colon &\rYD{B}\times \lcomod{C}(\rcomod{B})\longrightarrow\lcomod{C}(\rcomod{B}).
\end{align*}

\subsection{Examples}\label{examplessect}

The rest of this section is devoted to the inclusion of various general examples, based on natural (co)module algebra structures from the literature.

\begin{example}[Regular coaction]\label{regularcoaction}
Any bialgebra $B$ is a $B$-comodule algebra with respect to the \emph{left regular coaction} via the coproduct $\Delta$, see e.g. \cite{Maj1}*{Example 1.6.17}. We denote $B$ viewed as a $B$-comodule algebra in this way by $B^{\op{reg}}$. The category $\lmod{B^{\op{reg}}}(\lcomod{B})$ is equivalent to the category of \emph{Hopf modules} over $B$ in $\cB$ as treated (in such a generality) in \cites{Bes,BD}. A Hopf module $V$ is an object of $\cB$ with a $B$-coaction $\delta$ and a $B$-action $a$, such that
\begin{align}
\delta a&= (m_B\otimes a)(\ide_B\otimes \Psi_{B,B}\otimes \ide_V)(\Delta\otimes \delta) &&\Longleftrightarrow&&
\vcenter{\hbox{
\begingroup%
  \makeatletter%
  \providecommand\color[2][]{%
    \errmessage{(Inkscape) Color is used for the text in Inkscape, but the package 'color.sty' is not loaded}%
    \renewcommand\color[2][]{}%
  }%
  \providecommand\transparent[1]{%
    \errmessage{(Inkscape) Transparency is used (non-zero) for the text in Inkscape, but the package 'transparent.sty' is not loaded}%
    \renewcommand\transparent[1]{}%
  }%
  \providecommand\rotatebox[2]{#2}%
  \ifx\svgwidth\undefined%
    \setlength{\unitlength}{59.04203141bp}%
    \ifx\svgscale\undefined%
      \relax%
    \else%
      \setlength{\unitlength}{\unitlength * \real{\svgscale}}%
    \fi%
  \else%
    \setlength{\unitlength}{\svgwidth}%
  \fi%
  \global\let\svgwidth\undefined%
  \global\let\svgscale\undefined%
  \makeatother%
  \begin{picture}(1,0.52112539)%
    \put(0.27628689,0.26289578){\color[rgb]{0,0,0}\makebox(0,0)[lb]{\smash{ }}}%
    \put(0,0){\includegraphics[width=\unitlength,page=1]{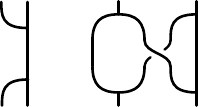}}%
    \put(0.1968943,0.22893528){\color[rgb]{0,0,0}\makebox(0,0)[lb]{\smash{$=$}}}%
    \put(0.7939266,0.19711312){\color[rgb]{0,0,0}\makebox(0,0)[lb]{\smash{$~$}}}%
  \end{picture}%
\endgroup%
}}.
\end{align}
In this case, Theorem \ref{mainthm} recovers the result of \cite{Lau} (proved there for quasi-Hopf algebras) that the tensor product $V\otimes W$ of a Yetter--Drinfeld module $V$ with a Hopf module $W$ is again a Hopf module over $B$. We will remark in Examples \ref{Heisexample} and \ref{heistwist} how this tensor product generalizes the main result of \cite{Lu}.
\end{example}

\begin{example}[Trivial coaction]\label{trivialcoaction} A more basic example is given by viewing $B$ as a comodule algebra $B^{\triv}$ with respect to the trivial coaction (via the counit $\varepsilon$) on itself. In this case, the category $\lmod{B^{\triv}}(\lcomod{B})$ consist of simultaneous left $B$-modules and comodules such that the structures commute, i.e. an object $(V,a,\delta)$ with action $a$ and coaction $\delta$ satisfying
\begin{align}
\delta a&=(\ide_B\otimes a)(\Psi_{B,B}\otimes \ide_V)(\ide_B\otimes \delta).
\end{align}
In this case, the action of $\lYD{B}$ factors via the forgetful functor $\lYD{B}\to \lcomod{B}$ through the regular categorical action of $\lcomod{B}$ on $\lmod{B^{\triv}}(\lcomod{B})$, which acts by tensoring the $B$-comodule structures, and the induced $B$-module structure.
\end{example}

\begin{example}[Adjoint action]
Another natural action for a Hopf algebra object $H$ in $\cB$ on itself is given by the \emph{right adjoint action}, defined by
\begin{align}
a^{\op{ad}} := m_H(S\otimes m_H)(\Psi_{H,H}\otimes \ide_H)(\ide_H\otimes \Delta)=\vcenter{\hbox{
\begingroup%
  \makeatletter%
  \providecommand\color[2][]{%
    \errmessage{(Inkscape) Color is used for the text in Inkscape, but the package 'color.sty' is not loaded}%
    \renewcommand\color[2][]{}%
  }%
  \providecommand\transparent[1]{%
    \errmessage{(Inkscape) Transparency is used (non-zero) for the text in Inkscape, but the package 'transparent.sty' is not loaded}%
    \renewcommand\transparent[1]{}%
  }%
  \providecommand\rotatebox[2]{#2}%
  \ifx\svgwidth\undefined%
    \setlength{\unitlength}{35.32045251bp}%
    \ifx\svgscale\undefined%
      \relax%
    \else%
      \setlength{\unitlength}{\unitlength * \real{\svgscale}}%
    \fi%
  \else%
    \setlength{\unitlength}{\svgwidth}%
  \fi%
  \global\let\svgwidth\undefined%
  \global\let\svgscale\undefined%
  \makeatother%
  \begin{picture}(1,1.16079628)%
    \put(0,0){\includegraphics[width=\unitlength]{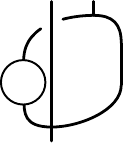}}%
    \put(0.07582427,0.38995321){\color[rgb]{0,0,0}\makebox(0,0)[lb]{\smash{$S$}}}%
  \end{picture}%
\endgroup%
}}~~,
\end{align}
cf. \cite{Maj1}*{Example 1.6.9}. This makes $H$ a right $H$-module algebra denoted by $H^{\op{ad}}$. Thus, by Theorem \ref{mainthm2}, we we have a left tensor action $$\rYD{H}\times \lmod{H^{\op{ad}}}(\rmod{H})\to \lmod{H^{\op{ad}}}(\rmod{H}).$$
\end{example}

\begin{example}
A more concrete example can be given by considering the case $H=\Bbbk G$ of a group algebra. In this case, the category $\lmod{H^{\op{ad}}}(\rmod{H})$ consists of $G\rtimes^{\op{ad}} G$-modules, for the group algebra of the semidirect product of $G$ by itself, in which  
$(1,g)(h,1) = (ghg^{-1},g)$ holds. Indeed, an object $W$ in $\lmod{H^{\op{ad}}}(\rmod{H})$ has a right $H$-action $w\cdot g$ and a left $H$-action $g\cdot w$, for $w\in W$ and $g\in G$. It becomes a left $G\rtimes^{\op{ad}} G$-module by linearly extending an action defined for pairs $g,h\in G$ by
\begin{align}
(g,h)\triangleright w=g\cdot (w\cdot h^{-1}).
\end{align}
Theorem \ref{mainthm2} states that we can tensor a module over $G\rtimes^{\op{ad}} G$ by right $G$-crossed module, i.e a $G$-graded right $G$-module $V=\bigoplus_{h\in G}V_g$ such that $ V_h\cdot g=V_{g^{-1}hg}$. Then the resulting $G\rtimes^{\op{ad}}G$-module has action given, for $v\in V_{|v|}$, $w\in W$, by
\begin{align*}
(h,1)\triangleright v\otimes w&=v\otimes |v|^{-1}h|v|\cdot w,\\
(1,g)\triangleright v\otimes w&=v\cdot g^{-1}\otimes w\cdot g^{-1}.
\end{align*}
\end{example}

\begin{example}[Adjoint coaction coalgebra]
We can also consider the left \emph{adjoint coaction} for a Hopf algebra $H$ in $\cB$ on itself which is defined by 
\begin{align}
\delta^{\op{ad}} := (m\otimes \ide)(\ide\otimes \Psi(\ide \otimes S)\Delta)\Delta=\vcenter{\hbox{
\begingroup%
  \makeatletter%
  \providecommand\color[2][]{%
    \errmessage{(Inkscape) Color is used for the text in Inkscape, but the package 'color.sty' is not loaded}%
    \renewcommand\color[2][]{}%
  }%
  \providecommand\transparent[1]{%
    \errmessage{(Inkscape) Transparency is used (non-zero) for the text in Inkscape, but the package 'transparent.sty' is not loaded}%
    \renewcommand\transparent[1]{}%
  }%
  \providecommand\rotatebox[2]{#2}%
  \ifx\svgwidth\undefined%
    \setlength{\unitlength}{37.0695443bp}%
    \ifx\svgscale\undefined%
      \relax%
    \else%
      \setlength{\unitlength}{\unitlength * \real{\svgscale}}%
    \fi%
  \else%
    \setlength{\unitlength}{\svgwidth}%
  \fi%
  \global\let\svgwidth\undefined%
  \global\let\svgscale\undefined%
  \makeatother%
  \begin{picture}(1,1.10634114)%
    \put(0,0){\includegraphics[width=\unitlength]{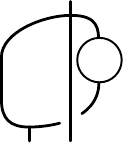}}%
    \put(0.62580889,0.52059878){\color[rgb]{0,0,0}\makebox(0,0)[lb]{\smash{$S$}}}%
  \end{picture}%
\endgroup%
}}~~.
\end{align}
This makes $H$ a coalgebra object in $\lcomod{H}$.  The left comodule coalgebra version of Corollary \ref{coalgebracor} implies a right tensor action
\[
\triangleright\colon \rcomod{H^{\op{ad}}}(\lcomod{H})\times \lYD{H}\longrightarrow \rcomod{H^{\op{ad}}}(\lcomod{H}).
\]
Given an object $(W,\delta^l,\delta^r)$ in $\rcomod{H^{\op{ad}}}(\lcomod{H})$, where $\delta^l$ is the left and $\delta^r$ the right $H$-coaction, and an object $(V,a,\delta)$ of $\lYD{H}$, the left tensor product $H$-comodule $W\triangleleft V$ obtains a right $H$-comodule structure given by
\begin{align}
\delta^r_{W\triangleright V}=(\ide_W\otimes a\otimes \ide_H)(\ide_W\otimes \Psi_{H,V})(\ide_W\otimes \delta^{\op{ad}}\otimes \ide_V)(\delta_r\otimes \ide_V)=\vcenter{\hbox{
\begingroup%
  \makeatletter%
  \providecommand\color[2][]{%
    \errmessage{(Inkscape) Color is used for the text in Inkscape, but the package 'color.sty' is not loaded}%
    \renewcommand\color[2][]{}%
  }%
  \providecommand\transparent[1]{%
    \errmessage{(Inkscape) Transparency is used (non-zero) for the text in Inkscape, but the package 'transparent.sty' is not loaded}%
    \renewcommand\transparent[1]{}%
  }%
  \providecommand\rotatebox[2]{#2}%
  \newcommand*\fsize{\dimexpr\f@size pt\relax}%
  \newcommand*\lineheight[1]{\fontsize{\fsize}{#1\fsize}\selectfont}%
  \ifx\svgwidth\undefined%
    \setlength{\unitlength}{53.3223775bp}%
    \ifx\svgscale\undefined%
      \relax%
    \else%
      \setlength{\unitlength}{\unitlength * \real{\svgscale}}%
    \fi%
  \else%
    \setlength{\unitlength}{\svgwidth}%
  \fi%
  \global\let\svgwidth\undefined%
  \global\let\svgscale\undefined%
  \makeatother%
  \begin{picture}(1,0.99884297)%
    \lineheight{1}%
    \setlength\tabcolsep{0pt}%
    \put(0,0){\includegraphics[width=\unitlength,page=1]{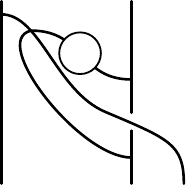}}%
    \put(0.36206025,0.64956169){\color[rgb]{0,0,0}\makebox(0,0)[lt]{\lineheight{0}\smash{\begin{tabular}[t]{l}$S$\end{tabular}}}}%
  \end{picture}%
\endgroup%
}}.
\end{align}
\end{example}

\begin{example}[Adjoint coaction algebra]\label{adjointcoaction}
Let $H$ be a commutative Hopf algebra in $\cB$, cf. Section \ref{setup}. In this case, $H^{\op{ad}}$ is also an \emph{algebra} in the category $\lcomod{H}$. 
To prove this, graphical calculus can be used (Fig \ref{Fig2}).
\begin{figure}[bt]
\[
\vcenter{\hbox{\import{Graphics/}{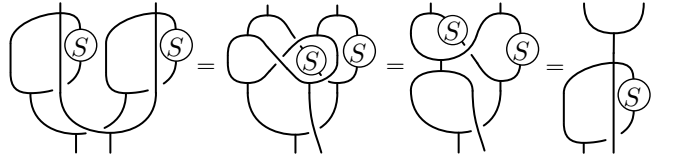}}}~~.\]
\caption{Proof that $H^{\op{ad}}$ is a left $H$-comodule algebra}\label{Fig2}
\end{figure}
The third equality uses Lemma \ref{weirdcross}. In fact, combining this observation with the previous example, $H^{\op{ad}}$ becomes a Hopf algebra in the braided monoidal category $\lcomod{H}$.
Again using commutativity of $H$, the category $\lmod{H^{\op{ad}}}(\lcomod{H})$ is equivalent to $\lYD{H}$ (compare to condition \ref{altYD2}), and the tensor product of Theorem \ref{mainthm} recovers the monoidal structure on $\lYD{H}$ under this equivalence.
\end{example}

\begin{example}[Transmutation]
Let now let $\cB=\Vect$. An interesting generalization of the previous example was constructed by Majid (see \cite{Maj8}*{Theorem 4.1}, and also \cite{Maj1}*{9.4.10}) using a process called \emph{transmutation}. Let $H$ be dual quasitriangular. Then  $H$ is replaced by the \emph{covariantized} version $\un{H}$. As coalgebras, $\un{H}=H$, but the product is changed so that $\un{H}$ is an algebra object in $\lcomod{H}$ with respect to the adjoint coaction (in \cites{Maj8,Maj1}, right comodules are used, but a left comodule version can be constructed). There exists an equivalence of monoidal categories $\lmod{\un{H}}(\lcomod{H})\simeq \lYD{H}$ \cite{Maj1}*{Theorem 7.4.5}. Under this equivalence, the tensor product action of Theorem \ref{mainthm} corresponds to the monoidal structure of $\lYD{H}$.
\end{example}


\section{Constructing Comodule Algebras over the Drinfeld Double}\label{algebrapicture}

In this section, we relate the results from Section \ref{ydtensoractions} to the construction of comodule algebras over braided Drinfeld doubles.

\subsection{Dually Paired Braided Bialgebras}

We now describes the setup adapted throughout this section. We let $B$ be a bialgebra object in $\cB$ as before. Assume $C$ is another bialgebra object in $\cB$ which is a \emph{left weak dual} to $B$.

\begin{definition}\label{catdualdef}$~$
\begin{enumerate}
\item[(i)]
Two bialgebras $B,C$ in $\cB$ are \emph{weakly dual} if there exists a \emph{weak bialgebra pairing} $\ev\colon C\otimes B\to I$, denoted by $\ev=\vcenter{\hbox{
\begingroup%
  \makeatletter%
  \providecommand\color[2][]{%
    \errmessage{(Inkscape) Color is used for the text in Inkscape, but the package 'color.sty' is not loaded}%
    \renewcommand\color[2][]{}%
  }%
  \providecommand\transparent[1]{%
    \errmessage{(Inkscape) Transparency is used (non-zero) for the text in Inkscape, but the package 'transparent.sty' is not loaded}%
    \renewcommand\transparent[1]{}%
  }%
  \providecommand\rotatebox[2]{#2}%
  \ifx\svgwidth\undefined%
    \setlength{\unitlength}{16.80101968bp}%
    \ifx\svgscale\undefined%
      \relax%
    \else%
      \setlength{\unitlength}{\unitlength * \real{\svgscale}}%
    \fi%
  \else%
    \setlength{\unitlength}{\svgwidth}%
  \fi%
  \global\let\svgwidth\undefined%
  \global\let\svgscale\undefined%
  \makeatother%
  \begin{picture}(1,0.52545969)%
    \put(0,0){\includegraphics[width=\unitlength]{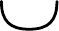}}%
  \end{picture}%
\endgroup%
}}$ such that
\begin{align}
\ev(m_C\otimes \ide_B)&=\ev^{\otimes 2}(\ide_{C\otimes C}\otimes \Delta_B), &\ev(\ide_C\otimes m_B)&=\ev^{\otimes 2}(\Delta_C\otimes \ide_{B\otimes B}),\\
\ev(1_C\otimes \ide_B)&=\varepsilon_B, & \ev(\ide_C\otimes 1_B)&=\varepsilon_C,
\end{align}
where $\ev^{\otimes 2}=\ev(\ide_C\otimes \ev\otimes \ide_B)$. 
\item[(ii)]
If $B, C$ are Hopf algebras, then we require, in addition, that
\begin{equation}
\ev(S_C\otimes \ide_B)=\ev(\ide_C\otimes S_B).
\end{equation}
\item[(iii)] We say that the weakly dual bialgebras (or Hopf algebras) $B$, $C$ are \emph{strongly dual} if there exists a \emph{coevaluation} $\coev\colon I\to B\otimes C$ such that
\begin{align}
(\ev\otimes \ide_C)(\ide_C\otimes \coev)&=\ide_C, & (\ide_B\otimes \coev)(\coev\otimes \ide_B)&=\ide_B.
\end{align}
\end{enumerate}
\end{definition}

\begin{remark}
Note that a weak bialgebra pairing $\ev$ is not required to be non-degenerate in general. Any pairing of strongly dual bialgebras is non-degenerate, but in the infinite-dimensional case, the converse does not hold as the coevaluation map only exists in a completion of the tensor product. Hence, in this case, the duality is not strong.

Note further that the convention on $\ev^{\otimes 2}$ is different from the usual induced pairing on $V^{\otimes 2}$ for vector spaces, but more suited to working in a braided monoidal category since it minimizes the occurrence of the braiding. Duality pairings following this convention are sometimes called \emph{categorical}, cf. \cite{Maj6}*{Section~2.2}.
\end{remark}

The conditions of Definition \ref{catdualdef} are express via graphical calculus as
\begin{equation}\label{dualhopfcond}
\vcenter{\hbox{
\begingroup%
  \makeatletter%
  \providecommand\color[2][]{%
    \errmessage{(Inkscape) Color is used for the text in Inkscape, but the package 'color.sty' is not loaded}%
    \renewcommand\color[2][]{}%
  }%
  \providecommand\transparent[1]{%
    \errmessage{(Inkscape) Transparency is used (non-zero) for the text in Inkscape, but the package 'transparent.sty' is not loaded}%
    \renewcommand\transparent[1]{}%
  }%
  \providecommand\rotatebox[2]{#2}%
  \ifx\svgwidth\undefined%
    \setlength{\unitlength}{322.28894175bp}%
    \ifx\svgscale\undefined%
      \relax%
    \else%
      \setlength{\unitlength}{\unitlength * \real{\svgscale}}%
    \fi%
  \else%
    \setlength{\unitlength}{\svgwidth}%
  \fi%
  \global\let\svgwidth\undefined%
  \global\let\svgscale\undefined%
  \makeatother%
  \begin{picture}(1,0.07812286)%
    \put(0,0){\includegraphics[width=\unitlength]{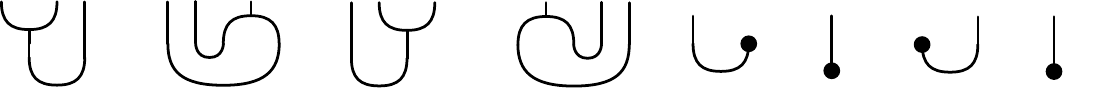}}%
    \put(0.69609229,0.03862872){\color[rgb]{0,0,0}\makebox(0,0)[lb]{\smash{$=$}}}%
    \put(0.09831456,0.03978099){\color[rgb]{0,0,0}\makebox(0,0)[lb]{\smash{$=$}}}%
    \put(0.26169948,0.03641221){\color[rgb]{0,0,0}\makebox(0,0)[lb]{\smash{$,$}}}%
    \put(0.41107738,0.03907179){\color[rgb]{0,0,0}\makebox(0,0)[lb]{\smash{$=$}}}%
    \put(0.56949781,0.03570302){\color[rgb]{0,0,0}\makebox(0,0)[lb]{\smash{$,$}}}%
    \put(0.76807739,0.03845123){\color[rgb]{0,0,0}\makebox(0,0)[lb]{\smash{$,$}}}%
    \put(0.89467186,0.03791938){\color[rgb]{0,0,0}\makebox(0,0)[lb]{\smash{$=$}}}%
    \put(0.96665696,0.03774189){\color[rgb]{0,0,0}\makebox(0,0)[lb]{\smash{$.$}}}%
  \end{picture}%
\endgroup%
}}
\end{equation}
If $\cB=\Vect$ then $C$ can be taken to be the finite dual $B^\circ$ \cite{Mon}*{Chapter~9}.

\begin{example}[Nichols algebras] Let $V$ be a Yetter--Drinfeld module over a Hopf algebra $H$ over $\Bbbk$. Then $T(V)$ has a unique structure of a braided Hopf algebra in $\lYD{H}$, generated by primitive elements $v\in V$.
The prototype example of a duality pairing as in Definition \ref{catdualdef} of braided Hopf algebras is given by the self-duality of Nichols algebras (due to \cite{Lus}*{Proposition~1.2.3} in the case of an abelian group). The pairing is obtained by taking quotients by the left and right radical in the induced weak Hopf algebra pairing $T(V)\otimes T(V)\to \Bbbk$ of the tensor algebras. Then there is a non-degenerate pairing $\cB(V)\otimes\cB(V)\to \Bbbk$ for the corresponding Nichols algebra $\cB(V)$. This non-degeneracy characterizes the Nichols algebra $\cB(V)$ as a quotient of $T(V)$, see e.g. \cite{AS} for details.

More generally, one can take any ideal $I$ in $T(V)$ which is homogeneous and generated in degree larger or equal to two and a Yetter--Drinfeld submodule. Then  $T(V)/I$ is a braided Hopf algebra, sometimes called a \emph{pre-Nichols algebra}, cf. \cite{Mas}. As $I$ is contained in the radical of the pairing $T(V)\otimes T(V)\to \Bbbk$, the pairing factors to a weak self-duality of the braided Hopf algebra $T(V)/{I}$.
\end{example}

\begin{lemma}\label{comodmodduality}
Let $C, B$ be dually paired Hopf algebras. Then there are functors of monoidal categories
\begin{align*}
&\Phi_B^{\leftexp{\cop}{C}}\colon \lcomod{B}(\cB)\longrightarrow\lmod{\leftexp{\cop}{C}}(\overline{\cB}), &(V,\delta)\longmapsto (V,a),\\
&a:=(\ev\otimes \ide_V)(\ide_C\otimes \delta)=\vcenter{\hbox{
\begingroup%
  \makeatletter%
  \providecommand\color[2][]{%
    \errmessage{(Inkscape) Color is used for the text in Inkscape, but the package 'color.sty' is not loaded}%
    \renewcommand\color[2][]{}%
  }%
  \providecommand\transparent[1]{%
    \errmessage{(Inkscape) Transparency is used (non-zero) for the text in Inkscape, but the package 'transparent.sty' is not loaded}%
    \renewcommand\transparent[1]{}%
  }%
  \providecommand\rotatebox[2]{#2}%
  \ifx\svgwidth\undefined%
    \setlength{\unitlength}{26.7150203bp}%
    \ifx\svgscale\undefined%
      \relax%
    \else%
      \setlength{\unitlength}{\unitlength * \real{\svgscale}}%
    \fi%
  \else%
    \setlength{\unitlength}{\svgwidth}%
  \fi%
  \global\let\svgwidth\undefined%
  \global\let\svgscale\undefined%
  \makeatother%
  \begin{picture}(1,0.7861199)%
    \put(0,0){\includegraphics[width=\unitlength]{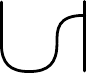}}%
    \put(0.50592518,0.39026647){\color[rgb]{0,0,0}\makebox(0,0)[lb]{\smash{$~$}}}%
  \end{picture}%
\endgroup%
}};\\
&\Phi_{\leftexp{\cop}{C}}^B\colon \lcomod{\leftexp{\cop}{C}}(\overline{\cB})\longrightarrow\lmod{B}(\cB), &(V,\delta)\longmapsto (V,a'),\\
&a':=(\ev\otimes \ide_V)(\Psi_{B,C}\otimes \ide_V)(S\otimes \delta)=\vcenter{\hbox{
\begingroup%
  \makeatletter%
  \providecommand\color[2][]{%
    \errmessage{(Inkscape) Color is used for the text in Inkscape, but the package 'color.sty' is not loaded}%
    \renewcommand\color[2][]{}%
  }%
  \providecommand\transparent[1]{%
    \errmessage{(Inkscape) Transparency is used (non-zero) for the text in Inkscape, but the package 'transparent.sty' is not loaded}%
    \renewcommand\transparent[1]{}%
  }%
  \providecommand\rotatebox[2]{#2}%
  \ifx\svgwidth\undefined%
    \setlength{\unitlength}{26.49642346bp}%
    \ifx\svgscale\undefined%
      \relax%
    \else%
      \setlength{\unitlength}{\unitlength * \real{\svgscale}}%
    \fi%
  \else%
    \setlength{\unitlength}{\svgwidth}%
  \fi%
  \global\let\svgwidth\undefined%
  \global\let\svgscale\undefined%
  \makeatother%
  \begin{picture}(1,1.14264308)%
    \put(0,0){\includegraphics[width=\unitlength,page=1]{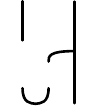}}%
    \put(0.42600781,0.36930284){\color[rgb]{0,0,0}\makebox(0,0)[lb]{\smash{$~$}}}%
    \put(0,0){\includegraphics[width=\unitlength,page=2]{dualaction2.pdf}}%
    \put(0.09493526,0.5759929){\color[rgb]{0,0,0}\makebox(0,0)[lb]{\smash{$S$}}}%
    \put(0,0){\includegraphics[width=\unitlength,page=3]{dualaction2.pdf}}%
  \end{picture}%
\endgroup%
}},
\end{align*}
where $\overline{\cB}$ denotes the monoidal category $\cB$ with the \emph{inverse} braiding $\Psi^{-1}$, and $\leftexp{\cop}{C}$ is $C$ with the same product, and (inverse) opposite coproduct $\Psi^{-1}\Delta_C$. The functor $\Phi_B^{\leftexp{\cop}{C}}$ does not require the existence of antipodes for $B$ and $C$.
\end{lemma}

We call the pairing $\ev$ \emph{non-degenerate} if the above functor $\Phi_{B}^{\leftexp{\cop}{C}}$ is fully faithful. If $\cB=\Vect$ then this is equivalent to the usual definition of a non-degenerate pairing, i.e. that the left and right radicals are zero.


\subsection{Braided Drinfeld Doubles}

We briefly recall the construction of braided Drinfeld doubles (called \emph{double bosonizations} in \cite{Maj3}) for conformity with the conventions of this paper. We now assume that $\cB=\lmod{H}$ for a quasitriangular Hopf algebra $H$ with universal R-matrix $R$ (cf. Section \ref{setup}). Let $C,B$ be (weakly) dual bialgebras in $\cB=\lmod{H}$. We denote the $H$-action on $B$ and $C$ by $\triangleright$ in order to be able to distinguish it from the product in the Drinfeld double defined below.

\begin{definition}\label{braideddrin1}
We define $\Drin_H(C,B)$ to be the $\Bbbk$-bialgebra generated by $C,H,B$ as subalgebras such that
\begin{align}\label{braideddrineq1}
(R^{-(1)}\triangleright b_{(2)})(R^{-(2)}\triangleright c_{(1)})\ev(c_{(2)}, b_{(1)})&=R^{-(1)}c_{(2)}b_{(1)}R^{(2)}\ev(R^{-(2)}\triangleright c_{(1)}, R^{(1)}\triangleright b_{(2)}),
\end{align}
for any $b\in B$, $c\in C$. Further,
\begin{align}
hb&=(h_{(1)}\triangleright b)h_{(2)},& hc&=(h_{(1)}\triangleright c)h_{(2)}, &\forall h\in H.\label{braideddrineq2}
\end{align}
The coproduct is given on generators by
\begin{align}\label{drincop1}
\Delta(h)&=h_{(1)}\otimes h_{(2)}, \\\label{drincop2}
\Delta(b)&=b_{(1)}R^{(2)}\otimes (R^{(1)}\triangleright b_{(2)}),\\\label{drincop3}
\Delta(c)&=R^{-(1)}c_{(2)}\otimes (R^{-(2)}\triangleright c_{(1)}).
\end{align}
The counit is defined on generators using the underlying counits in $C,H$, and $B$, and extended multiplicatively.

If $B$, $C$ are weakly dual Hopf algebras, then $\Drin_H(C,B)$ is $C\otimes H\otimes B$ as a $\Bbbk$-vector space. In this case, the antipode $S_{\Drin}$ is given by
\begin{align}\label{Drinantipode}
S_{\Drin}(h)&=S(h), & S_{\Drin}(b)&=S(R^{(2)})(R^{(1)}\triangleright Sb), &S_{\Drin}(c)&=S(R^{-(1)})(R^{-(2)}\triangleright S^{-1}c).
\end{align}
Further, in this case, Equation (\ref{braideddrineq1}) is equivalent to
\begin{align}\label{crossrel}
cb&=R_1^{-(1)}(R_2^{-(1)}\triangleright b_{(2)})(R_2^{-(2)}\triangleright c_{(2)})R^{(2)}\ev(R_1^{-(2)}\triangleright c_{(1)},R^{(1)}\triangleright Sb_{(3)})\ev(c_{(3)},b_{(1)}).
\end{align}
\end{definition}

Here, the notation $\Delta_B(b)=b_{(1)}\otimes b_{(2)}$ and $\Delta_C(c)=c_{(1)}\otimes c_{(2)}$ is used.
Equivalent to Equation (\ref{braideddrineq2}) we have 
$xh=h_{(2)}(S^{-1}h_{(1)}\triangleright x)$ for $x$ in $B$ or $C$.

If $C,B$ are strongly dual in the sense of Definition \ref{catdualdef}(iii) and we denote the coevaluation morphism $\coev\colon I\to B\otimes C$, by $\coev=e_\alpha \otimes f^\alpha$ (which is a sum of tensors), then $\Drin_H(C,B)$ is quasitriangular with universal R-matrix
\begin{equation}
R_{\Drin}=R^{(1)} f^\alpha \otimes e_\alpha R^{(2)}.
\end{equation}

\begin{proposition}\label{drinfeldprop}
If $C,B$ are weakly dual bialgebras (or Hopf algebras), then $\Drin_H(C,B)$ is a bialgebra (respectively, Hopf algebra), and there is a monoidal functor 
$$\Phi\colon \lYD{B}\longrightarrow \lmod{\Drin_H(C,B)}.$$
If the pairing $\ev$ of $C$ and $B$ is non-degenerate, then this functor is fully faithful. 

If $B,C$ are strongly dual, then $\Phi$ is an equivalence of braided monoidal categories. 
\end{proposition}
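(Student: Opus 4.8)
The plan is to prove the statement in the order it is asserted: first the (Hopf) bialgebra structure on $\Drin_H(C,B)$, then the construction of $\Phi$ on objects and the verification that it lands in $\lmod{\Drin_H(C,B)}$, then monoidality, and finally the fully faithfulness and equivalence claims. For the bialgebra assertion I would either invoke Majid's double bosonization \cite{Maj3}, of which $\Drin_H(C,B)$ is an instance, or verify the axioms directly on the generators $C,H,B$: multiplicativity of the coproduct (\ref{drincop1})--(\ref{drincop3}) reduces to its compatibility with the relations (\ref{braideddrineq1})--(\ref{braideddrineq2}), for which the R-matrix axioms (\ref{Rmatrix1})--(\ref{Rmatrix2}) and the pairing axioms of Definition \ref{catdualdef} are used; coassociativity and the counit are clear on $H$ and follow on $B,C$ from (\ref{Rmatrix1})--(\ref{Rmatrix2}); and in the Hopf case one checks the antipode axiom for (\ref{Drinantipode}) on generators using (\ref{Rmatrix3}) and the antipode compatibility of $\ev$.

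To define $\Phi$, let $(V,a,\delta)$ be an object of $\lYD{B}$ and set $\Phi(V)=V$ with its underlying $H$-action from $\cB=\lmod{H}$, the given $B$-action $a$, and the $C$-action $c\cdot v:=\ev(c\otimes v^{(-1)})v^{(0)}$ coming from $\delta$ via the functor $\Phi_B^{\leftexp{\cop}{C}}$ of Lemma \ref{comodmodduality}. Associativity of the $B$- and $C$-actions is then immediate, since $\leftexp{\cop}{C}$ carries the same algebra structure as $C$, and the two relations (\ref{braideddrineq2}) are exactly the $H$-equivariance of $a$ and of $\delta$, which hold because these are morphisms in $\cB=\lmod{H}$ (and $\ev$ is $H$-invariant). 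The main obstacle is the cross relation (\ref{braideddrineq1}): substituting the $C$-action back through $\ev$ turns it into an identity purely between $a$ and $\delta$, which I expect to be equivalent to the Yetter--Drinfeld condition (\ref{YDcond}) once the braiding $\Psi$ of $\lmod{H}$ (governed by $R$) is inserted and the axioms (\ref{Rmatrix1})--(\ref{Rmatrix3}) together with the pairing axioms are applied. Reconciling the two R-matrices appearing on the two sides of (\ref{braideddrineq1}) against the single braiding in (\ref{YDcond}) is where the bookkeeping is heaviest.

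Monoidality of $\Phi$ amounts to matching the Yetter--Drinfeld tensor product on $V\otimes W$ with the $\Drin_H(C,B)$-module structure built from the coproduct. The $H$-action matches by (\ref{drincop1}); the $B$-action matches because the factor $R^{(2)}\otimes(R^{(1)}\triangleright-)$ in (\ref{drincop2}) reproduces precisely the braiding $\Psi_{B,V}$ inserted in the tensor action (\ref{tensorproductaction}); and, dually, (\ref{drincop3}) reproduces the tensor coaction after pairing, so that the $C$-action agrees as well. Since $\Phi$ is the identity on underlying objects and morphisms and sends the unit to the unit, this exhibits $\Phi$ as a (strict) monoidal functor.

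For the final claims, faithfulness is clear. For fullness under non-degeneracy, a $\Drin_H(C,B)$-linear map $f\colon\Phi(V)\to\Phi(W)$ is automatically a morphism in $\cB$ commuting with the $B$-action, and its commuting with the $C$-action forces it to commute with the $B$-coaction, since non-degeneracy of $\ev$ is precisely the fully faithfulness of $\Phi_B^{\leftexp{\cop}{C}}$ adopted after Lemma \ref{comodmodduality}; hence $f$ is a morphism in $\lYD{B}$. When $B,C$ are strongly dual, the coevaluation $\coev=e_\alpha\otimes f^\alpha$ reconstructs a $B$-coaction from any $C$-action via $\delta(m):=e_\alpha\otimes(f^\alpha\cdot m)$; one checks this inverts $\Phi_B^{\leftexp{\cop}{C}}$ and that, combined with the $B$-action, it yields a Yetter--Drinfeld module, giving essential surjectivity, so $\Phi$ is an equivalence of categories. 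Finally, since $\Drin_H(C,B)$ is quasitriangular with $R_{\Drin}=R^{(1)} f^\alpha \otimes e_\alpha R^{(2)}$, the induced braiding on $\lmod{\Drin_H(C,B)}$ unwinds---collapsing $\coev$ against $\ev$ by strong duality---to the Yetter--Drinfeld braiding $\Psi^{\mathbf{YD}}_{V,W}$, so that $\Phi$ is an equivalence of braided monoidal categories.
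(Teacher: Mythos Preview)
Your proposal is correct and follows essentially the same construction of $\Phi$ as the paper: the same $H$-action, $B$-action, and $C$-action $c\cdot v=\ev(c\otimes v^{(-1)})v^{(0)}$ via $\Phi_B^{\leftexp{\cop}{C}}$. However, the argumentative strategy differs in a meaningful way.

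You proceed by direct verification: take the bialgebra structure on $\Drin_H(C,B)$ as given (by citing \cite{Maj3} or by checking axioms on generators), then check by hand that the cross relation (\ref{braideddrineq1}) on $V$ is equivalent to the Yetter--Drinfeld condition (\ref{YDcond}), and verify monoidality by matching the coproduct formulas (\ref{drincop1})--(\ref{drincop3}) against the tensor action and coaction. The paper instead introduces an intermediate category $\leftexp{C,B}{\mathbf{YD}}$ of simultaneous left $B$- and $\leftexp{\cop}{C}$-modules satisfying the compatibility (\ref{drinfeldcomp}), observes that $\Phi_B^{\leftexp{\cop}{C}}$ carries $\lYD{B}$ into this category, and then invokes Tannaka--Krein reconstruction (via \cite{Maj1}*{Theorem 9.4.6}) for the forgetful functor $\leftexp{C,B}{\mathbf{YD}}\to\Vect$ to produce both the bialgebra $\Drin_H(C,B)$ and the equivalence $\leftexp{C,B}{\mathbf{YD}}\simeq\lmod{\Drin_H(C,B)}$ in one stroke. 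Your approach is more elementary and self-contained, at the cost of the ``heaviest bookkeeping'' you flag when reconciling (\ref{braideddrineq1}) with (\ref{YDcond}); the paper's approach is more conceptual and explains \emph{why} the relations in Definition \ref{braideddrin1} take the form they do (they are reconstructed from the category), but relies on the Tannaka--Krein machinery as a black box. Both routes establish the non-degenerate and strongly dual cases the same way, by appealing to the fully-faithfulness, respectively invertibility, of $\Phi_B^{\leftexp{\cop}{C}}$.
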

\begin{proof}[Proof (sketch).]
Given an object $V$ in $\lYD{B}$, define $\Phi(V)=V$ with the same left $H$-action and $B$-action. The $C$-coaction is given by $c\triangleright v=\ev(c\otimes v^{(-1)})v^{(0)}$, for any $c\in C$, $v\in V$. This assignment becomes a functor by setting $\Phi(f)=f$ for any morphism $f\colon V\to W$ in $\lYD{B}$.

The result is due to \cite{Maj3}, with slightly different relations (as there a different category of Yetter--Drinfeld modules is used, see \cite{Maj3}*{Appendix~B}, and $B$ is regarded as a \emph{right} $H$-module).

Consider the monoidal functor $\Phi_{B}^{\leftexp{\cop}{C}}$ from Lemma \ref{comodmodduality} to produce a monoidal functor from $\lYD{B}$ into a category $\leftexp{C,B}{\mathbf{YD}}$ of simultaneous left $B$-modules and $\leftexp{\cop}{C}$-modules $V$ which are compatible via the condition
\begin{align}\label{drinfeldcomp}
&a_B(\ide_B\otimes a_C)(\Psi_{B,C}^{-1}\otimes\ide)(\ide_C\otimes \ev\otimes \ide_{B\otimes V})(\Delta_C\otimes \Delta_B\otimes \ide_V)\\
&=(\ev\otimes \ide_V)(\ide\otimes \Psi_{B,V})(\ide_C\otimes a_C\otimes \ide_B)(\ide\otimes a_B\otimes \ide_B)(\ide\otimes \Psi_{B,V})(\Delta_C\otimes \Delta_B\otimes \ide_V),
\end{align}
where $a_B$ denotes the $B$-action, and $a_C$ the $\leftexp{\cop}{C}$-action. It is not hard to check that this category is monoidal and braided monoidal if $B$ and $C$ are strongly dual. If $B,C$ are Hopf algebras, the subcategory on finite-dimensional objects is rigid. Since $\cB=\lmod{H}$ for a quasitriangular Hopf algebra $H$, we can now apply a type of Tannaka--Krein duality (see e.g. \cite{Maj1}*{Chapter~9}) to the monoidal category $\leftexp{C,B}{\mathbf{YD}}$, using the forgetful functor  $$\rF\colon \leftexp{C,B}{\mathbf{YD}}\longrightarrow \Vect.$$
Note that the representability condition \cite{Maj1}*{Equation (9.40)} holds for $\Drin_H(C,B)$. That is, there are natural bijections
\begin{align*}\Hom_{\Vect}(V, \Drin_H(C,B))\cong \Nat(V\otimes \rF^n,\rF^n), &&\forall n\geq 0.\end{align*}
Hence \cite{Maj1}*{Theorem 9.4.6} gives a monoidal functor
\begin{align*}
\Phi'\colon \leftexp{C,B}{\mathbf{YD}}\longrightarrow \lmod{\Drin_H(C,B)}
\end{align*}
which is in fact an equivalence of categories. This follows as it can be explicitly described as follows: Given an object $V$ of $\leftexp{C,B}{\mathbf{YD}}$, the actions of $H$, $B$, and $C$ on $\Phi'(V)$ are given by the original actions on $V$, and hence uniquely determine $V$. This shows that $\Phi'$ is essentially surjective and full. It is faithful since $\rF$ is faithful. The relations for $\Drin_H(C,B)$ are now obtained via Tannaka--Krein reconstruction, and the general theory ensures that $\Drin_H(C,B)$ is a bialgebra (or a Hopf algebra, provided that $C,B$ are), cf. \cite{Maj1}*{Section 9.4}.

To find the formulas for the antipode in the Hopf algebra case, the identities
\begin{align*}
R_2^{(1)}R_1^{(1)}\otimes R_1^{(2)}S(R_2^{(2)})=1\otimes 1, &&R_1^{-(1)}S(R_2^{-(1)})\otimes R_2^{-(2)}R_1^{-(2)}=1\otimes 1,
\end{align*}
are used. Note that if the duality is strong, then the functor $\Phi_B^{\leftexp{\cop}{C}}$ from Lemma \ref{comodmodduality} gives an equivalence $\lYD{B}\simeq \leftexp{C,B}{\mathbf{YD}}$.
\end{proof}

\begin{remark}
If the pairing $\ev$ is convolution invertible, then $\Drin_H(C,B)$ is isomorphic to $C\otimes H\otimes B$ as a $\Bbbk$-vector space even if $C,B$ are no Hopf algebras. This follows using the approach of \cites{Maj10,KS}. 
\end{remark}


\begin{example}
For $H=\Bbbk$, the trivial Hopf algebra, and $B$ a finite-dimensional Hopf algebra, we define $$\Drin_{\Bbbk}(B):=\Drin_\Bbbk( \leftexp{\cop}{B},\leftexp{\cop}{B^*}).$$
Here, $B^*=\Hom_\Bbbk(B,\Bbbk)$ is the dual of $B$, which becomes a bialgebra by defining 
\begin{align}
(\Delta f )(h\otimes g)&=f(hg), & (e\cdot f)(h)=e(h_{(1)})f(h_{(2)}), &&\forall h,g\in B, e,f\in B^*.
\end{align}
There is strong duality pairing $$\ev\colon  \leftexp{\cop}{B}\otimes \leftexp{\cop}{B^*} \longrightarrow\Bbbk, \qquad h\otimes f\longmapsto f(h)$$
in the sense of Definition \ref{catdualdef}(iii).

Equivalently, this pairing gives a pairing 
$$\langle~,~\rangle\colon  B\otimes B^* \longrightarrow\Bbbk, \qquad h\otimes f\longmapsto f(h),$$
which satisfies the conditions 
\begin{align}
\langle hg,f \rangle=\langle h,f_{(1)}\rangle\langle g,f_{(2)}\rangle, &&\langle h,ef \rangle=\langle h_{(1)},e\rangle\langle h_{(2)},f\rangle.
\end{align}
The Hopf algebra $\Drin_\Bbbk(B)$ defined this way recovers the usual \emph{Drinfeld double} (or \emph{quantum double}) in the form containing $B^{*\cop}$ as found, for example, in \cite{Maj1}*{Theorem 7.1.2}.

For completeness, we give an explicit presentation for $\Drin_{\Bbbk}(B)$. The algebra $\Drin_\Bbbk(B)$ is defined on $B\otimes B^*$ subject to the relation
\begin{align*}
h_{(1)}f_{(2)}\ev(h_{(2)},f_{(1)})=f_{(1)}h_{(2)}\ev(h_{(1)},f_{(2)}),&&\forall h\in B, f\in B^*.
\end{align*}
The coproduct and counit are the respective structures on $(B^*)^{\cop}\otimes B$.
\end{example}

\begin{example}
Now assume that $C,B$ are primitively generated. That is, as algebras, the sets $P(C)$ and $P(B)$ generate $C$, respectively $B$. Here, $b\in P(B)$ and $c\in P(C)$ if and only if
\begin{align*}
\Delta_B(b)=b\otimes 1+1\otimes b, &&\Delta_C(c)=c\otimes 1+1\otimes c.
\end{align*}
Denote the braided commutator by
\begin{align*}
[c,b]_{\Psi}&=cb-m\Psi(c\otimes b)=cb-(R^{(2)}\triangleright b)(R^{(1)}\triangleright c).
\end{align*}
The algebra $\Drin_H(C,B)$ is then generated by $b\in P(B)$, $c\in P(C)$ and $h\in H$ such that
\begin{align}\label{comm1}
[c,b]_{\Psi^{-1}}&=\ev(c,b)-R^{-(1)}R^{(2)}\ev(R^{-(2)}\triangleright c,R^{(1)}\triangleright b).
\end{align}
Or, equivalently,
\begin{align}\label{comm2}
[b,c]_{\Psi}&={R^{(2)}}_{(1)}R^{-(1)}\ev(R^{-(2)}{R^{(2)}}_{(2)}\triangleright c,R^{(1)}\triangleright b)-\ev(R^{(2)}\triangleright c,R^{(1)}\triangleright b),
\end{align}
together with the relations from Equation (\ref{braideddrineq2}) and those from the products of $B,C$, and $H$.

The equivalence of the two commutator relations Eqs. (\ref{comm1}) and (\ref{comm2}) is seen by precomposing with $\Psi_{C,B}$ in Equation (\ref{comm1}) to produce Equation (\ref{comm2}), under use of the axioms for the universal R-matrix, or more conveniently, using graphical calculus.

For primitive elements, the coproduct formulas from (\ref{drincop2})--(\ref{drincop3}) become
\begin{align*}
\Delta_{\Drin}(b)&=b\otimes 1+R^{(2)}\otimes (R^{(1)}\triangleright b),\\
\Delta_{\Drin}(c)&=c\otimes 1+R^{-(1)}\otimes (R^{-(2)}\triangleright c),
\end{align*}
with counit
\begin{equation*}
\varepsilon_{\Drin}(b)=\varepsilon_{\Drin}(c)=0,\qquad \varepsilon_{\Drin}(h)=\varepsilon(h).
\end{equation*}
The antipode is given by the same formulas as in Equation (\ref{Drinantipode2}).
\end{example}


\subsection{Braided Crossed Products}

Given an algebra object $A$ in $\lmod{B}=\lmod{B}(\cB)$. We recall the construction of the algebra $A\rtimes B$ from \cite{Maj6}*{Proposition~2.6}. The defining feature of $A\rtimes B$ is that $\lmod{A\rtimes B}$ is equivalent to $\lmod{A}(\lmod{B})$. 

The algebra $A\rtimes B$ is given on the tensor product $A\otimes B$ in $\cB$ with product
\begin{equation}\label{crossproduct}
m_{A\otimes B}= (m_A\otimes m_B)(\ide_A\otimes a_A\otimes \ide_{B\otimes B})(\ide_{A\otimes B}\otimes \Psi_{B,A}\otimes \ide_{B})(\ide_A\otimes \Delta_B\otimes \ide_{A\otimes B}).
\end{equation}
Using graphical calculus, the multiplication becomes
\begin{equation}
m_{A\otimes B}=\vcenter{\hbox{
\begingroup%
  \makeatletter%
  \providecommand\color[2][]{%
    \errmessage{(Inkscape) Color is used for the text in Inkscape, but the package 'color.sty' is not loaded}%
    \renewcommand\color[2][]{}%
  }%
  \providecommand\transparent[1]{%
    \errmessage{(Inkscape) Transparency is used (non-zero) for the text in Inkscape, but the package 'transparent.sty' is not loaded}%
    \renewcommand\transparent[1]{}%
  }%
  \providecommand\rotatebox[2]{#2}%
  \ifx\svgwidth\undefined%
    \setlength{\unitlength}{45.43224335bp}%
    \ifx\svgscale\undefined%
      \relax%
    \else%
      \setlength{\unitlength}{\unitlength * \real{\svgscale}}%
    \fi%
  \else%
    \setlength{\unitlength}{\svgwidth}%
  \fi%
  \global\let\svgwidth\undefined%
  \global\let\svgscale\undefined%
  \makeatother%
  \begin{picture}(1,0.9181643)%
    \put(0,0){\includegraphics[width=\unitlength,page=1]{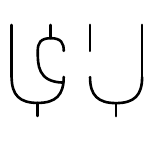}}%
    \put(-0.00822181,0.8414365){\color[rgb]{0,0,0}\makebox(0,0)[lb]{\smash{$A$}}}%
    \put(0.23939969,0.8414365){\color[rgb]{0,0,0}\makebox(0,0)[lb]{\smash{$B$}}}%
    \put(0.48702118,0.8414365){\color[rgb]{0,0,0}\makebox(0,0)[lb]{\smash{$A$}}}%
    \put(0.81423552,0.84291058){\color[rgb]{0,0,0}\makebox(0,0)[lb]{\smash{$B$}}}%
    \put(0.15538511,0.01455744){\color[rgb]{0,0,0}\makebox(0,0)[lb]{\smash{$A$}}}%
    \put(0.65504984,0.01897968){\color[rgb]{0,0,0}\makebox(0,0)[lb]{\smash{$B$}}}%
    \put(0,0){\includegraphics[width=\unitlength,page=2]{crossprod.pdf}}%
  \end{picture}%
\endgroup%
}}~.
\end{equation}

Given weakly left dual bialgebras $C$ for $B$, recall the monoidal functor $\Psi_{B}^{\leftexp{\cop}{C}}$ from Lemma \ref{comodmodduality}. Given a $B$-comodule algebra $A$, $A$ becomes a $\leftexp{\cop}{C}$-module algebra (in $\overline{\cB}$) with action given by 
$c\triangleright a=a^{(0)}\ev(c,a^{(-1)})$, for all  $a\in A, c\in C$,
where $\delta(a)=a^{(-1)}\otimes a^{(0)}\in B\otimes A$.
Hence, we can consider the crossed product $A\rtimes \leftexp{\cop}{C}$. Now again assume $\cB=\lmod{H}$ for a quasitriangular Hopf algebra $H$ over $\Bbbk.$ We give a presentation for the algebra $A\rtimes \leftexp{\cop}{C}$. The product from Equation (\ref{crossproduct}) gives that $A\rtimes \leftexp{\cop}{C}=A\otimes C$ with $A$, $C$ as subalgebras and the additional relations
\begin{align}\label{crossproduct2}
ca&=(R^{-(1)}\triangleright a^{(0)})(R^{-(2)}\triangleright c_{(1)})\ev(c_{(2)},a^{(-1)}), &&\forall a\in A, c\in C.
\end{align}
We also need the iterated cross product algebra $A\rtimes \leftexp{\cop}{C}\rtimes H$. This algebra is defined on $A\otimes C\otimes H$, with $A$, $C$, and $H$ as subalgebras, and relations Equation (\ref{crossproduct2}) as well as
\begin{align}
ha&=(h_{(1)}\triangleright a)h_{(2)}, &hc&=(h_{(1)}\triangleright c)h_{(2)}, &\forall h\in H, a\in A, c\in C.
\end{align}

\begin{example}[Braided Heisenberg double]\label{braidedheis}
In Example \ref{regularcoaction} we saw that $B$ itself is naturally a left $B$-comodule algebra in $\cB$, denoted by $B^{\op{reg}}$. Applying the construction above, we obtain the braided cross product $$\Heis_H(C,B)=B^{\op{reg}}\rtimes \leftexp{\cop}{C}\rtimes H$$ which is referred to as the \emph{braided Heisenberg double} (cf. \cite{Lau}*{Section~2.3}). It is generated as an algebra by the subalgebras $C,B$, and $H$ subject to the relations Equation (\ref{braideddrineq2}) and
\begin{align}
cb=(R^{-(1)}\triangleright b_{(2)})(R^{-(2)}\triangleright c_{(1)})\ev(c_{(2)},b_{(1)}),&&\forall c\in C, b\in B.
\end{align}
\end{example}
If, for example, $B$, $C$ are primitively generated, this relation becomes
\begin{align}
[c,b]_{\Psi^{-1}}&=\ev(c,b).
\end{align}

\begin{example}[Twisted tensor product algebra]\label{twistedtensor}
Recall from Example \ref{trivialcoaction} that $B$ is also a left $B$-comodule algebra, denoted by $B^{\triv}$, with respect to the trivial coaction. In this case, we denote $B^{\triv}\rtimes\leftexp{\cop}{C}$ by $B\otimes_{\Psi^{-1}}C$, which indices that this is a braided tensor product algebra, with product given by
\begin{equation}
m_{B\otimes_{\Psi^{-1}} C}=(m_B\otimes m_C)(\ide_B\otimes \Psi^{-1}_{C,B}\otimes \ide_C).
\end{equation}
Hence, in $B\otimes_{\Psi^{-1}}C\rtimes H$, the multiplication is given by
\begin{equation}
bhc\cdot b'h'c'= b(h_{(1)}R^{-(1)}\triangleright b')h_{(2)}h'_{(2)}(S^{-1}(h'_{(1)})R^{-(2)}\triangleright c)c'.
\end{equation}
\end{example}

\subsection{Comodule Algebras over Braided Drinfeld Doubles}\label{comodulealgebras}

We now generalize the results of Theorem \ref{mainthm} and \ref{mainthm2} to $\Drin_H(C,B)$-comodule algebra structures on certain cross product algebras, for $\cB=\lmod{H}$.

\begin{corollary}\label{comodulealgebracor}Let $C, B$ be weakly dual bialgebras in $\cB$.
Let $A$ be a left $B$-comodule algebra in $\cB=\lmod{H}$. Then $A\rtimes \leftexp{\cop}{C}\rtimes H$ is a left $\Drin_H(C,B)$-comodule algebra (with $A\rtimes \leftexp{\cop}{C}$ as a comodule subalgebra). The coaction $\delta_{\Drin}$ is given by
\begin{align}
\delta_{\Drin}(a)&=a^{(-1)}R^{(2)}\otimes (R^{(1)}\triangleright a^{(0)}), \\
\delta_{\Drin}(c)&=R^{-(1)}c_{(2)}\otimes (R^{-(2)}\triangleright c_{(1)}), \\
 \delta_{\Drin}(h)&=h_{(1)}\otimes h_{(2)},
\end{align}
for $a\in A$, $c\in C$, $h\in H$, and $\delta(a)=a^{(-1)}\otimes a^{(0)}$ denoting the $B$-coaction on $A$.
\end{corollary}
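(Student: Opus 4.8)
The plan is to verify directly that the formulas for $\delta_{\Drin}$ on the generators $a\in A$, $c\in C$, $h\in H$ extend to a well-defined algebra homomorphism $\delta_{\Drin}\colon A\rtimes\leftexp{\cop}{C}\rtimes H\to \Drin_H(C,B)\otimes(A\rtimes\leftexp{\cop}{C}\rtimes H)$ which is coassociative and counital. The guiding observation is that these formulas are obtained from the coproduct formulas (\ref{drincop1})--(\ref{drincop3}) of $\Drin_H(C,B)$ by leaving the parts on $C$ and $H$ unchanged and replacing the coproduct $\Delta_B$ of $B$ by the $B$-coaction $\delta\colon A\to B\otimes A$ of the comodule algebra $A$. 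Since $A$ being a $B$-comodule algebra means $\delta$ satisfies the coaction analogues of the coassociativity and multiplicativity axioms of $\Delta_B$ (cf. Eq. (\ref{comodalg})), each verification below runs parallel to the corresponding step in the proof that $\Drin_H(C,B)$ is a bialgebra (Proposition \ref{drinfeldprop}), under this dictionary.

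First I would observe that $\delta_{\Drin}$ restricted to the subalgebra $\leftexp{\cop}{C}\rtimes H$ agrees with the composite of the inclusion $\leftexp{\cop}{C}\rtimes H\hookrightarrow\Drin_H(C,B)$, the coproduct $\Delta_{\Drin}$, and the inclusion of $\leftexp{\cop}{C}\rtimes H$ into the right tensor factor. As $\Delta_{\Drin}$ is already known to be a coassociative, counital algebra map, the relations internal to $C$ and $H$ and the relation $hc=(h_{(1)}\triangleright c)h_{(2)}$ of Eq. (\ref{braideddrineq2}) are automatically respected, and coassociativity and the counit axiom hold on this subalgebra. It therefore remains to treat the generators coming from $A$, where three families of relations must be checked: multiplicativity on $A$, i.e. $\delta_{\Drin}(aa')=\delta_{\Drin}(a)\delta_{\Drin}(a')$; the cross relation (\ref{crossproduct2}) between $C$ and $A$; and the relation $ha=(h_{(1)}\triangleright a)h_{(2)}$.

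Multiplicativity on $A$ follows from the comodule-algebra axiom (\ref{comodalg}) together with the quasitriangularity identities (\ref{Rmatrix1})--(\ref{Rmatrix3}) and the $H$-linearity of $\delta$; this is the exact analogue of the computation showing $\Delta_{\Drin}$ is multiplicative on $B$, with $\Delta_B$ replaced by $\delta$. The $A$-$H$ relation uses only that $\delta$ is a morphism of $H$-modules and the quasitriangularity of $R$, and is handled as for $B$. Coassociativity on $A$, $(\Delta_{\Drin}\otimes\ide)\delta_{\Drin}(a)=(\ide\otimes\delta_{\Drin})\delta_{\Drin}(a)$, reduces to coassociativity of the coaction $\delta$ combined with (\ref{Rmatrix1})--(\ref{Rmatrix2}), again mirroring the coassociativity check for $\Delta_{\Drin}$ on $B$. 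The counit axiom on $A$ is immediate from the R-matrix normalization $\varepsilon(R^{(2)})R^{(1)}=1$ and the counit property $\varepsilon_B(a^{(-1)})a^{(0)}=a$ of the coaction.

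The main obstacle is compatibility with the cross relation (\ref{crossproduct2}), namely
\[
\delta_{\Drin}(ca)=\delta_{\Drin}\bigl((R^{-(1)}\triangleright a^{(0)})(R^{-(2)}\triangleright c_{(1)})\ev(c_{(2)},a^{(-1)})\bigr),
\]
which is the comodule-algebra counterpart of the defining cross relation (\ref{braideddrineq1}) of $\Drin_H(C,B)$. Expanding both sides requires interleaving the pairing axioms of Definition \ref{catdualdef}, the $H$-equivariance of $\ev$ and of $\delta$, and repeated use of the R-matrix identities (\ref{Rmatrix1})--(\ref{Rmatrix3}); this is the step where the braiding enters essentially and is most efficiently carried out in graphical calculus. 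Once it is established, $\delta_{\Drin}$ is a bona fide algebra map, and combining this with the coassociativity and counit checks shows it is a left $\Drin_H(C,B)$-coaction, with the second tensor factor landing in $A\rtimes\leftexp{\cop}{C}$ on generators from $A$ and $C$, so that $A\rtimes\leftexp{\cop}{C}$ is a comodule subalgebra. Conceptually, this coaction is exactly the structure predicted by Theorem \ref{mainthm} and Proposition \ref{drinfeldprop}: the module-category action of $\lYD{B}\simeq\lmod{\Drin_H(C,B)}$ on $\lmod{A}(\lcomod{B})\simeq\lmod{A\rtimes\leftexp{\cop}{C}\rtimes H}$ is governed precisely by a $\Drin_H(C,B)$-comodule algebra structure, which in the strongly dual case renders the present verification automatic.
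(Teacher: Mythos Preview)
Your proposal is correct in outline, but it takes a genuinely different route from the paper's proof. The paper argues categorically: it transports the tensor-product action of Theorem~\ref{mainthm} along the monoidal functor $\Phi_B^{\leftexp{\cop}{C}}$ of Lemma~\ref{comodmodduality} to obtain an action of $\leftexp{C,B}{\mathbf{YD}}$ on $\lmod{A}(\lmod{\leftexp{\cop}{C}})$, and then invokes the equivalences $\leftexp{C,B}{\mathbf{YD}}\simeq\lmod{\Drin_H(C,B)}$ and $\lmod{A}(\lmod{\leftexp{\cop}{C}})\simeq\lmod{A\rtimes\leftexp{\cop}{C}\rtimes H}$ together with a Tannaka--Krein reconstruction statement (\cite{Lau}*{Proposition~3.8.4}) to \emph{extract} the comodule-algebra structure, whose explicit form is then read off. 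No relations are checked by hand. You, by contrast, verify the comodule-algebra axioms directly on generators, guided by the dictionary ``replace $\Delta_B$ by the coaction $\delta$'' relative to the known bialgebra structure of $\Drin_H(C,B)$. Your approach is more elementary and self-contained, and makes transparent exactly which axioms of $A$ and of the R-matrix are used at each step; the cost is that the cross-relation check (the analogue of (\ref{braideddrineq1})) has to be redone rather than inherited. The paper's approach avoids this repetition by packaging it into Theorem~\ref{mainthm} and the reconstruction machinery, at the price of relying on that machinery.

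One small correction to your closing remark: the categorical argument does not actually require strong duality. The monoidal functor $\lYD{B}\to\lmod{\Drin_H(C,B)}$ of Proposition~\ref{drinfeldprop} exists for weakly dual bialgebras, and this (together with the equivalence $\leftexp{C,B}{\mathbf{YD}}\simeq\lmod{\Drin_H(C,B)}$, which also holds in the weak case) is all that the reconstruction argument needs. Strong duality is only required for $\Phi$ to be an \emph{equivalence}, which is not used here.
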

\begin{proof}
First, the monoidal functor $\Phi_B^{\leftexp{\cop}{C}}$ from Lemma \ref{comodmodduality} applied to the $B$-coaction gives a functor 
$$\Phi_A\colon \lmod{A}(\lcomod{B})\longrightarrow \lmod{A}(\lmod{\leftexp{\cop}{C}}).$$
Recall that by Proposition \ref{drinfeldprop}, there is a monoidal functor $\Psi\colon \lYD{B}\to \leftexp{C,B}{\mathbf{YD}}.$

Let $V\in \lYD{B}$ and $W\in \lmod{A}(\lcomod{B})$. By Theorem \ref{mainthm}, we have that $V\triangleright W$ is an object in $\lmod{A}(\lcomod{B})$. As a left $\leftexp{\cop}{C}$-module, $\Phi_A(V\triangleright W)$ is isomorphic to $\Psi(V)\otimes \Phi_A(W)$. If we define the $A$-action on $\Psi(V)\otimes \Phi_A(W)$ by the same formula as in Eq. (\ref{Aactioneq}), then these objects become isomorphic in $\lmod{A}(\lmod{\leftexp{\cop}{C}})$. More generally, for any object $V$ of $\leftexp{C,B}{\mathbf{YD}}$ and $\lmod{A}(\lmod{\leftexp{\cop}{C}})$, the tensor product $\leftexp{\cop}{C}$-module $V\otimes W$ becomes an object in $\lmod{A}(\lmod{\leftexp{\cop}{C}})$ again with the same $A$-action as in Eq. (\ref{Aactioneq}).

The result now follows under the equivalences $\lmod{A}(\lmod{\leftexp{\cop}{C}})\simeq \lmod{A\rtimes \leftexp{\cop}{C}\rtimes H}$ and $\leftexp{C,B}{\mathbf{YD}}\simeq \lmod{\Drin_H(C,B)},$ using a reconstruction statement as for example found in \cite{Lau}*{Proposition 3.8.4}.
\end{proof}

\begin{example}[Braided Heisenberg double]\label{Heisexample}
The algebra $\Heis_H(C,B)$ from Example \ref{braidedheis} is a $\Drin_H(C,B)$-comodule algebra with coaction given by
\begin{align}
\delta(b)=b_{(1)}R^{(2)}\otimes (R^{(1)}\triangleright b_{(2)}),&&
\delta(c)=R^{(-1)}c_{(2)}\otimes (R^{-(2)}\triangleright c_{(1)}), &&
 \delta(h)=h_{(1)}\otimes h_{(2)},
\end{align}
This recovers the result from \cite{Lau}*{Corollary 3.5.8}, generalizing \cite{Lu}*{Corollary 6.4}.
\end{example}

\begin{example}
For the twisted tensor product algebra $B\otimes_{\Psi^{-1}}C\rtimes H$ from Example \ref{twistedtensor}, we find that the coaction is determined by
\begin{align}
\delta(b)=R^{(2)}\otimes (R^{(1)}\triangleright b),
\end{align}
as well as the same formulas for $\delta(c)$, $\delta(h)$.
\end{example}

We can produce a version of Corollary \ref{comodulealgebracor} valid for right $C$-comodule coalgebras. For this, we shall assume from now on that $C,B$ are weakly dual \emph{Hopf algebras} in $\cB$. We need the following technical lemma:

\begin{lemma}Let $C,B$ are weakly dual Hopf algebras in $\cB$. 
There is a monoidal functor 
$$\Psi\colon \rYD{C}\longrightarrow \lmod{\Drin_H(C,B)}.$$
If the pairing $\ev$ of $C$ and $B$ is non-degenerate, then this functor is fully faithful. 

If $C,B$ are strongly dual, then $\Psi$ is part of an equivalence of braided monoidal categories.
\end{lemma}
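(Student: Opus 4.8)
The plan is to prove this as the mirror image of Proposition \ref{drinfeldprop}, exploiting the symmetric roles played by $B$ and $C$ inside $\Drin_H(C,B)$. Recall that the proof of Proposition \ref{drinfeldprop} factored $\Phi$ through the intermediate category $\leftexp{C,B}{\mathbf{YD}}$ of simultaneous left $B$-modules and left $\leftexp{\cop}{C}$-modules satisfying the compatibility (\ref{drinfeldcomp}), together with the equivalence $\Phi'\colon \leftexp{C,B}{\mathbf{YD}}\stackrel{\sim}{\longrightarrow}\lmod{\Drin_H(C,B)}$ obtained by Tannaka--Krein reconstruction. Since $\Phi'$ is already available and is an equivalence, it suffices to construct a monoidal functor $G\colon \rYD{C}\to \leftexp{C,B}{\mathbf{YD}}$ and set $\Psi=\Phi'\circ G$.

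First I would build $G$ out of two of the translation functors already established. A right $C$-action on $V$ is turned into a left $\leftexp{\cop}{C}$-action by Lemma \ref{rightleft} (applied with $C$ in place of $B$), namely $a'=a\Psi^{-1}_{C,V}(S_C^{-1}\otimes \ide_V)$; this is consistent with the fact, used already in Proposition \ref{drinfeldprop}, that $C$ sits inside $\Drin_H(C,B)$ with its opposite coproduct. A right $C$-coaction $\delta\colon V\to V\otimes C$ is turned into a left $B$-action by composing with the pairing, in the right-handed analogue of $\Phi_{\leftexp{\cop}{C}}^B$ from Lemma \ref{comodmodduality}; on elements this reads $b\triangleright v=v^{(0)}\ev(v^{(1)},b)$ up to the braiding and antipode prescribed by that lemma. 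Thus $G(V)$ carries a left $B$-action and a left $\leftexp{\cop}{C}$-action.

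The central step is to verify that the right Yetter--Drinfeld compatibility of $V$ over $C$ translates, under these two conversions, into exactly the compatibility (\ref{drinfeldcomp}) defining $\leftexp{C,B}{\mathbf{YD}}$. I would carry this out in graphical calculus: substitute the two defining formulas for the $B$- and $\leftexp{\cop}{C}$-actions into (\ref{drinfeldcomp}), then use the pairing axioms (\ref{dualhopfcond}) to slide the evaluation past the coproducts, the antipode axioms to cancel $S_C$ against $S_C^{-1}$, and naturality of the braiding to absorb the inverse braidings coming from $\overline{\cB}$. Monoidality of $G$ then follows by checking that the right Yetter--Drinfeld tensor product on $V_1\otimes V_2$ (built from the coproduct of $C$) agrees, after translation, with the tensor product of $\leftexp{C,B}{\mathbf{YD}}$; under $\Phi'$ this is precisely what matches the coproduct formulas (\ref{drincop2})--(\ref{drincop3}) of the double.

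Finally, the three regimes follow formally. Since $\Phi'$ is always an equivalence, $\Psi=\Phi'\circ G$ is fully faithful (respectively an equivalence) precisely when $G$ is. When $\ev$ is non-degenerate, the coaction-to-action part of $G$ is fully faithful, exactly as for $\Phi_B^{\leftexp{\cop}{C}}$ in the definition of non-degeneracy, while the action part coming from Lemma \ref{rightleft} is always an equivalence; hence $G$, and therefore $\Psi$, is fully faithful. When $C,B$ are strongly dual, the coevaluation $\coev$ recovers the right $C$-coaction from the left $B$-action, so $G$ is in addition essentially surjective and thus an equivalence, and compatibility of the respective braidings upgrades it to a braided monoidal equivalence, giving the last claim. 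The main obstacle is the central diagrammatic identity of the previous paragraph: because the right-handed structures over $C$ are converted into left-handed ones living in $\overline{\cB}$, the computation mixes the braiding and its inverse, the universal R-matrix and the antipode $S_C^{-1}$, and keeping the asymmetric appearances of $R$ and $R^{-1}$—arising from how $B$ and $C$ enter the coproduct of $\Drin_H(C,B)$—correctly aligned is the delicate bookkeeping.
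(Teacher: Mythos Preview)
Your proposal is correct and follows essentially the same route as the paper: both factor through the intermediate category $\leftexp{C,B}{\mathbf{YD}}$ and the equivalence $\Phi'$ from Proposition~\ref{drinfeldprop}, convert the right $C$-action to a left $\leftexp{\cop}{C}$-action via Lemma~\ref{rightleft}, convert the right $C$-coaction to a left $B$-action via the pairing (the paper phrases this as applying $\Phi^B_{\leftexp{\cop}{C}}$ after first passing to $\lYD{\leftexp{\cop}{C}}(\overline{\cB})$, which amounts to the same formula), and then check that the right YD condition translates into (\ref{drinfeldcomp}) using the antipode axioms. The paper's proof is terser and does not spell out the monoidality or the three regimes as explicitly as you do, but the strategy is identical.
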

\begin{proof}
Using Lemma \ref{rightleft}, there is an equivalence of categories $\rYD{C}(\cB)\simeq\lYD{\leftexp{\cop}{C}}(\overline{\cB})$. Combine this with the functor $\Phi^B_{\leftexp{\cop}{C}}$ from Lemma \ref{comodmodduality}, and we obtain from the data of a right YD-module over $C$, a left $B$-action together with a left $\leftexp{\cop}{C}$-action, such that tensor products can be formed. It remains to verify that these induced actions satisfy the a compatibility condition equivalent to Equation (\ref{drinfeldcomp}). Indeed, translating the compatibility condition of Equations (\ref{YDcond}), (\ref{ydpic})
under $\Phi^B_{\leftexp{\cop}{C}}$ gives the equation
\begin{align}\begin{split}
&a_C(\ide_C\otimes\ev\otimes a_B)(\Delta_C\otimes (S\otimes \ide_B)\Delta_B\otimes \ide_V)=(\ev\otimes \ide_V)(\ide_C\otimes \Psi_{V,B})\\&(\ide_C\otimes a_B\otimes\ide_B)(\ide_{C\otimes B}\otimes a_C\otimes \ide_B)(\ide_C\otimes \Psi^{-1}_{B,C}\otimes\Psi_{B,V})(\Delta_C\otimes(\ide\otimes S)\Delta_B\otimes\ide_V).\end{split}
\end{align}
Applying the antipode axioms for $S$ twice, we see that this equations is equivalent to Equation (\ref{drinfeldcomp}).
Hence, there is a monoidal functor $\rYD{C}\to \lmod{\Drin_H(C,B)}$ via a Tannaka--Krein reconstruction argument as in Proposition \ref{drinfeldprop}.
\end{proof}

Given a right $C$-comodule algebra $A$, the above functors make $A$ a right $B$-module algebra. We provide a presentation for $A\rtimes B$ using this induced $B$-action 
\begin{equation}\label{inducedBaction}
a':=(\ide_A\otimes \ev)(\delta_A\otimes \ide_B)\Psi_{B,A}(S\otimes \ide_A)=\vcenter{\hbox{
\begingroup%
  \makeatletter%
  \providecommand\color[2][]{%
    \errmessage{(Inkscape) Color is used for the text in Inkscape, but the package 'color.sty' is not loaded}%
    \renewcommand\color[2][]{}%
  }%
  \providecommand\transparent[1]{%
    \errmessage{(Inkscape) Transparency is used (non-zero) for the text in Inkscape, but the package 'transparent.sty' is not loaded}%
    \renewcommand\transparent[1]{}%
  }%
  \providecommand\rotatebox[2]{#2}%
  \ifx\svgwidth\undefined%
    \setlength{\unitlength}{22.63010775bp}%
    \ifx\svgscale\undefined%
      \relax%
    \else%
      \setlength{\unitlength}{\unitlength * \real{\svgscale}}%
    \fi%
  \else%
    \setlength{\unitlength}{\svgwidth}%
  \fi%
  \global\let\svgwidth\undefined%
  \global\let\svgscale\undefined%
  \makeatother%
  \begin{picture}(1,1.39242297)%
    \put(0,0){\includegraphics[width=\unitlength,page=1]{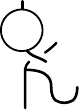}}%
    \put(0.11115479,0.86152832){\color[rgb]{0,0,0}\makebox(0,0)[lb]{\smash{$S$}}}%
    \put(0,0){\includegraphics[width=\unitlength,page=2]{dualaction3.pdf}}%
  \end{picture}%
\endgroup%
}}.\end{equation}
The algebra $A\rtimes B$ is defined on $A\rtimes B$, with subalgebras $A$, $B$ and the relation
\begin{align}\label{crossrelation2}
\begin{split}ba&=(R^{(2)}\triangleright a)^{(0)}(R^{(1)}\triangleright b)_{(2)}\ev((R^{(2)}\triangleright a)^{(-1)},S((R^{(1)}\triangleright b)_{(1)}))\\
&=({R^{(2)}}_{(1)}\triangleright a^{(0)})({R^{(1)}}_{(2)}\triangleright b_{(2)})\ev({R^{(2)}}_{(2)}\triangleright a^{(-1)},{R^{(1)}}_{(1)}\triangleright S(b_{(1)})).\end{split}
\end{align}
Here, we denote $\delta_A(a)=a^{(0)}\otimes a^{(-1)}$. 

We can now provide a version of Theorem \ref{mainthm} working with right $C$-module coalgebras.

\begin{corollary}\label{maincor2}
Let $C,B$ are weakly dual Hopf algebras in $\cB$, and let $A$ be an algebra object in $\rcomod{C}$. Given objects $(V,a_V,\delta_V)$ in $\rYD{C}$, and $(W,a_W,\delta_W)$ in $\lmod{A}(\rcomod{C})$, their tensor product $V\otimes W$ becomes an object $V\triangleright W$ of $\lmod{A}(\rcomod{C})$ with $A$-action $a_{V\triangleright W}$ and $C$-coaction $\delta_{V\triangleright W}$ given by
\begin{align}
\begin{split}a_{V\triangleright W}&:=(\ide_W\otimes a_W)(\Psi^{-1}_{A,V}\otimes \ide_W)(\ide_A\otimes a_V\Psi^{-1}_{C,V}\otimes \ide_W)((\ide_A\otimes S^{-1})\delta_A\otimes \ide_{V\otimes W})\\&=\vcenter{\hbox{
\begingroup%
  \makeatletter%
  \providecommand\color[2][]{%
    \errmessage{(Inkscape) Color is used for the text in Inkscape, but the package 'color.sty' is not loaded}%
    \renewcommand\color[2][]{}%
  }%
  \providecommand\transparent[1]{%
    \errmessage{(Inkscape) Transparency is used (non-zero) for the text in Inkscape, but the package 'transparent.sty' is not loaded}%
    \renewcommand\transparent[1]{}%
  }%
  \providecommand\rotatebox[2]{#2}%
  \ifx\svgwidth\undefined%
    \setlength{\unitlength}{57.17458272bp}%
    \ifx\svgscale\undefined%
      \relax%
    \else%
      \setlength{\unitlength}{\unitlength * \real{\svgscale}}%
    \fi%
  \else%
    \setlength{\unitlength}{\svgwidth}%
  \fi%
  \global\let\svgwidth\undefined%
  \global\let\svgscale\undefined%
  \makeatother%
  \begin{picture}(1,0.72818993)%
    \put(0,0){\includegraphics[width=\unitlength]{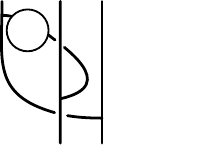}}%
    \put(0.3107874,0.39705876){\color[rgb]{0,0,0}\makebox(0,0)[lb]{\smash{$~$}}}%
    \put(0.05076752,0.51833444){\color[rgb]{0,0,0}\makebox(0,0)[lb]{\smash{\tiny{$S^{\text{-}1}$}}}}%
    \put(0.58451727,0.35428654){\color[rgb]{0,0,0}\makebox(0,0)[lb]{\smash{$,$}}}%
  \end{picture}%
\endgroup%
}}\end{split}\\
\delta_{V\triangleright W}&:=(\ide_{V\otimes W}\otimes m_B)(\ide_V\otimes \Psi_{B,W}\otimes \ide_B)(\delta_V\otimes \delta_W)=\vcenter{\hbox{}}.
\end{align}
\end{corollary}
\begin{proof}
This follows under use of Lemma \ref{rightleft} from Theorem \ref{mainthm}.
\end{proof}

\begin{corollary}\label{comodulealgebracor2}
Let $C,B$ are weakly dual Hopf algebras in $\cB$, and let $A$ be a right $C$-comodule algebra in $\cB=\lmod{H}$. Then $A\rtimes B\rtimes H$ is a left $\Drin_H(C,B)$-comodule algebra (with $A\rtimes B$ as a comodule subalgebra), where the left $B$-module structure is obtained from the right $C$-comodule structure via Equation (\ref{inducedBaction}). The coaction $\delta_{\Drin}$ is given by
\begin{align}
\delta_{\Drin}(a)&=R^{-(1)}a^{(-1)}\otimes (R^{-(2)}\triangleright a^{(0)}), \\
\delta_{\Drin}(b)&=b_{(1)}R^{(2)}\otimes (R^{(1)}\triangleright b_{(2)}), \\
 \delta_{\Drin}(h)&=h_{(1)}\otimes h_{(2)},
\end{align}
for $a\in A$, $c\in C$, $h\in H$, and $\delta(a)=a^{(0)}\otimes a^{(-1)}$ denoting the $C$-coaction on $A$.
\end{corollary}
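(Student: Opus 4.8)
The plan is to reproduce the argument of Corollary \ref{comodulealgebracor} with the left $B$-comodule structure replaced throughout by the right $C$-comodule structure, reducing everything to Corollary \ref{maincor2}, the monoidal functor $\Psi\colon \rYD{C}\to \lmod{\Drin_H(C,B)}$ constructed above, and a Tannaka--Krein reconstruction statement. First I would note that the induced left $B$-action of Equation (\ref{inducedBaction}) turns the right $C$-comodule algebra $A$ into a left $B$-module algebra, so that the iterated crossed product $A\rtimes B\rtimes H$ is defined and $\lmod{A\rtimes B\rtimes H}\simeq \lmod{A}(\lmod{B})$. Applying the same induced-action assignment objectwise to the $C$-coaction of modules then yields a functor $\Phi_A'\colon \lmod{A}(\rcomod{C})\to \lmod{A}(\lmod{B})$, the right-handed analogue of the functor $\Phi_A$ appearing in the proof of Corollary \ref{comodulealgebracor}.

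Next I would take $V\in \rYD{C}$ and $W\in \lmod{A}(\rcomod{C})$ and use Corollary \ref{maincor2} to obtain $V\triangleright W$ in $\lmod{A}(\rcomod{C})$. The crucial step is to check that $\Phi_A'(V\triangleright W)$ is isomorphic, as an object of $\lmod{A}(\lmod{B})$, to $\Psi(V)\otimes \Phi_A'(W)$, where $\Psi(V)$ is regarded in $\leftexp{C,B}{\mathbf{YD}}$ via the preceding Lemma and the $A$-action on the tensor product is the one prescribed by Corollary \ref{maincor2}. Just as in the left-handed case, this would show that the tensor product of $\cB$ equips $\lmod{A}(\lmod{B})$ with a categorical action of $\leftexp{C,B}{\mathbf{YD}}\simeq \lmod{\Drin_H(C,B)}$. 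Under the equivalences $\lmod{A}(\lmod{B})\simeq \lmod{A\rtimes B\rtimes H}$ and $\leftexp{C,B}{\mathbf{YD}}\simeq \lmod{\Drin_H(C,B)}$, a reconstruction statement (cf. \cite{Lau}*{Proposition 3.8.4}) then identifies such a categorical action with a left $\Drin_H(C,B)$-comodule algebra structure on $A\rtimes B\rtimes H$; since the construction restricts to $A\rtimes B$, this exhibits $A\rtimes B$ as a comodule subalgebra.

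Finally I would read off the explicit coaction by evaluating the reconstructed $\delta_{\Drin}$ on the generators. On the subalgebras $B$ and $H$ it has to coincide with the coproduct of $\Drin_H(C,B)$ restricted to the subalgebra $B\rtimes H$, with its second tensor factor read inside $A\rtimes B\rtimes H$; this yields the stated formulas on $b$ and $h$ from Equations (\ref{drincop1}) and (\ref{drincop2}). On $a\in A$ the formula is forced by the induced $B$-module structure of Equation (\ref{inducedBaction}) together with the $C$-part of the coproduct (\ref{drincop3}), and I expect it to come out as $\delta_{\Drin}(a)=R^{-(1)}a^{(-1)}\otimes (R^{-(2)}\triangleright a^{(0)})$.

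The step I expect to be the main obstacle is the compatibility in the second paragraph: verifying that $\Phi_A'$ intertwines the action of Corollary \ref{maincor2} with the tensor product action of $\leftexp{C,B}{\mathbf{YD}}$, equivalently that the $B$-module structure induced from the right $C$-coaction is compatible with the tensor product of $\leftexp{C,B}{\mathbf{YD}}$. This will be an R-matrix and braiding bookkeeping computation, which I would carry out most transparently in graphical calculus after transporting along Lemma \ref{rightleft}, where the antipode identities used in the proof of the preceding Lemma reappear.
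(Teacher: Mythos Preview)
Your proposal is correct and follows essentially the same route as the paper: mirror the proof of Corollary \ref{comodulealgebracor} using Corollary \ref{maincor2}, the monoidal functor $\Psi\colon \rYD{C}\to \lmod{\Drin_H(C,B)}$, and the reconstruction statement from \cite{Lau}*{Proposition 3.8.4}. The paper handles your anticipated ``main obstacle'' exactly as you suggest, by invoking Lemma \ref{rightleft} to rewrite the $A$-action of Corollary \ref{maincor2} in terms of the left $\leftexp{\cop}{C}$-action $b_V$ on $V$, and combining Lemmas \ref{rightleft} and \ref{comodmodduality} to identify the induced left $B$-action; from there the three coaction formulas drop out just as you outlined.
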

\begin{proof}
The proof follows the same strategy as the proof of Corollary \ref{comodulealgebracor}. Note that the $A$-action of Corollary \ref{maincor2} can be rewritten, using the passage from a right $C$-action to a left $\leftexp{\cop}{C}$-action from Lemma \ref{rightleft}, as 
\begin{align}
a_{V\triangleright W}&=(\ide_V\otimes a_W)(\Psi^{-1}_{V,A}\otimes \ide_W)(\ide_A\otimes b_V\otimes \ide_W)(\delta_A\otimes \ide_{V\otimes W}).
\end{align}
Thus, we find the formula for $\delta_{\Drin}(a)$ above. Further combine Lemma \ref{rightleft} and Lemma \ref{comodmodduality} to induce a left $B$-action $b$ from the right $C$-coaction $\delta$ by the formula
$$b=(\ide_V\otimes \ev)(\delta\otimes S)\Psi_{B,V},$$
showing that $\delta_{\Drin}(b)$ is just giving by the coproduct of $B\rtimes H$.
The $H$-actions is simply given by $\Delta_H$.
\end{proof}

\begin{example}
For a finite group $G$, let $A$ be a left $G$-comodule algebra. Then $A\rtimes \Bbbk [G]$ becomes a left $\Drin(G)$-coalgebra. Here, $\Bbbk[G]=\Hom_{\Bbbk}(G,\Bbbk)$ with coproduct
$$\Delta(\delta_g)=\sum_{a,b: ab=g}\delta_a\otimes \delta_b,$$
for $\delta_g(h)=\delta_{g,h}$. Further $\Drin(G)=\Bbbk[G]\otimes \Bbbk G$ with relations 
\begin{align*}
g\delta_h=\delta_{ghg^{-1}} g, && \forall g,h\in G,
\end{align*}
and $$\Delta_{\Drin}(g\delta_h)=\sum_{a,b: ab=g}{g\delta_a \otimes g\delta_b}.$$
The $\Drin(G)$-coaction on $A\rtimes \Bbbk [G]$ is given by 
\begin{align*}
\delta_{\Drin}(a)=|a|\otimes a, && \delta_{\Drin}(\delta_h)=\Delta(\delta_h),
\end{align*}
where $a$ is homogeneous of $G$-degree $|a|$.

Dual to this, assume that $B$ is a right $G$-module algebra. Then $B$ becomes a right $\Bbbk[G]$-comodule algebra via dualizing, i.e.
$$\delta(b)=\sum_{g\in G} (b\triangleleft g)\otimes \delta_g.$$
Now, $B\rtimes \Bbbk G$ is a left $\Drin(G)$-comodule algebra, with coaction given by
\begin{align*}
\delta_{\Drin}(b)&=\sum_{g\in G} \delta_g\otimes (b\triangleleft g), & \delta_{\Drin}(g)&=g\otimes g.
\end{align*}
\end{example}

\subsection{Weak Quasitriangular Structures}\label{weakquasisect}

In order to work with quantum groups, we present versions of the above results for $\cB$ having a braiding via a \emph{weak quasitriangular pair} of braided Hopf algebras following \cite{Maj3}. The basic idea is to generalize Definition \ref{braideddrin1} to a weaker context than $H$ being quasitriangular suitable for infinite-dimensional $H$.

Assume given a pair of weakly dual Hopf algebras $A$, $H$ over $\Bbbk$ with pairing $\langle~,~\rangle \colon H\otimes A\to \Bbbk$ satisfying
\begin{gather}
\langle \Delta(h),a\otimes b\rangle=\langle h_{(1)},b\rangle\langle h_{(2)},a\rangle, \qquad\langle h\otimes k,\Delta(a)\rangle=\langle h,a_{(2)}\rangle\langle k,a_{(1)}\rangle,\\ \langle 1, a\rangle=\varepsilon(a),\qquad \langle h,1\rangle=\varepsilon(h),\\
\langle Sh,a\rangle =\langle h,Sa\rangle.
\end{gather}
Note that this is a different pairing than in \cite{Maj3} (to make the connection, replace $H$ by $\leftexp{\cop}{H}^{\oop}$, i.e. using the opposite product and coproduct).

\begin{definition}\label{weakquasi}
A \emph{weak quasitriangular pair} of Hopf algebras is the data of Hopf algebras $A$, $H$ with a weak pairing $\langle~,~\rangle \colon H\otimes A\to \Bbbk$, and convolution-invertible morphisms of Hopf algebra $R, \overline{R}\colon A\to H^{\oop}$ (that is, these morphisms are anti-algebra maps, but coalgebra maps) satisfying the following axioms:
\begin{align}
\langle \overline{R}(a),b \rangle & = \langle R^{-\ast}(b),a \rangle,\\
R(a_{(2)})h_{(1)}\ev(h_{(2)},a_{(1)})&=h_{(2)}R(a_{(1)})\ev(h_{(1)},a_{(2)}),\\
\overline{R}(a_{(2)})h_{(1)}\ev(h_{(2)},a_{(1)})&=h_{(2)}\overline{R}(a_{(1)})\ev(h_{(1)},a_{(2)}).
\end{align}
\end{definition}

It was shown in \cite{Maj3} that the braided Drinfeld double (i.e. the double bosonization) can still be defined as a Hopf algebra given that $H, A$ are a weak quasitriangular pair of Hopf algebras. This versions is important to include Lusztig's version of the quantum groups $U_q(\mathfrak{g})$ as example.

\begin{definition}\label{braideddrin3}
Assume given a weakly quasitriangular pair $A,H$, and weakly dual bialgebras $B,C$ in $\lcomod{A}$.
 We define $\Drin_H(C,B)$ to be the Hopf algebra generated by $C,H,B$ as subalgebras such that
\begin{align}\label{braideddrineq1b}
\begin{split}
(R^{-\ast}({c_{(1)}}^{(-1)})\triangleright b_{(2)}){c_{(1)}}^{(0)}\ev(c_{(2)}, b_{(1)})&=R^{-\ast}({c_{(1)}}^{(-1)})c_{(2)}b_{(1)}\overline{R}^{-\ast}({b_{(2)}}^{(-1)})\ev({c_{(1)}}^{(0)}, {b_{(2)}}^{(0)}),\\
\Leftrightarrow\quad {b_{(2)}}^{(0)}(\overline{R}({b_{(2)}}^{(-1)})\triangleright c_{(1)})\ev(c_{(2)}, b_{(1)})&=R^{-\ast}({c_{(1)}}^{(-1)})c_{(2)}b_{(1)}\overline{R}^{-\ast}({b_{(2)}}^{(-1)})\ev({c_{(1)}}^{(0)}, {b_{(2)}}^{(0)}),
\end{split}
\end{align}
for any $b\in B$, $c\in C$. Further,
\begin{align}
hb&=(h_{(2)}\triangleright b)h_{(1)},& hc&=(h_{(2)}\triangleright c)h_{(1)}, &\forall h\in H.\label{braideddrineq2b}
\end{align}
The coproduct is given on generators by
\begin{align}
\Delta(h)&=h_{(2)}\otimes h_{(1)}, \\
\Delta(b)&=b_{(1)}\overline{R}^{-\ast}({b_{(2)}}^{(-1)})\otimes {b_{(2)}}^{(0)},\\
\Delta(c)&=R^{-\ast}({c_{(1)}}^{(-1)})c_{(2)}\otimes {c_{(1)}}^{(0)}.
\end{align}
The counit is defined on generators using the underlying counits in $C,H$, and $B$, and extended multiplicatively (as before).

If $B$, $C$ are dually paired Hopf algebras, then $\Drin_H(C,B)$ is $C\otimes H\otimes B$ as a $\Bbbk$-vector space, and the antipode $S_{\Drin}$ is given by
\begin{align}\label{Drinantipode2}
S_{\Drin}(h)&=S^{-1}(h), & S_{\Drin}(b)&=S\overline{R}^{-\ast}(b^{(-1)})S(b^{(0)}), &S_{\Drin}(c)&=SR^{-\ast}(c^{(-1)})S^{-1}(c^{(0)}).
\end{align}
\end{definition}
\begin{proof}[Proof (sketch)]
We skip the proof that the above version of the braided Drinfeld double is again a Hopf algebra and refer the reader to \cite{Maj3} where a slightly different version of this Hopf algebra is presented. Note that the relations can be translated from Definition \ref{braideddrin1} via
\begin{align}
R(a)&=\langle R^{(2)},a\rangle R^{(1)}, &R^{-\ast}(a)&=\langle R^{-(2)},a\rangle R^{-(1)},\\
\overline{R}(a)&=\langle R^{-(1)},a\rangle R^{-(2)}, &\overline{R}^{-\ast}(a)&=\langle R^{(1)},a\rangle R^{(2)}.
\end{align}
Hence, when acting by a tensor leg of a universal R-matrix in the relations of the braided Drinfeld double, we replace
\begin{align}
R^{(1)}\otimes (R^{(2)}\triangleright v)&=R(v^{(-1)})\otimes v^{(0)},&R^{-(1)}\otimes (R^{-(2)}\triangleright v)&=R^{-\ast}(v^{(-1)})\otimes v^{(0)},\\
R^{-(2)}\otimes (R^{-(1)}\triangleright v)&=\overline{R}(v^{(-1)})\otimes v^{(0)},&R^{(2)}\otimes (R^{(1)}\triangleright v)&=\overline{R}^{-\ast}(v^{(-1)})\otimes v^{(0)},
\end{align}
to obtain the formulas.
\end{proof}

\begin{remark}
Let us motivate the notion of a weak quasitriangular pair further. The idea is that given a weak quasitriangular pair $A,H$, the Hopf algebra $A$ obtains a dual quasitriangular structure $R\colon A\otimes A\to \Bbbk$. Indeed, for all $a,b\in A$,
\begin{align}
R(a\otimes b)=\langle R(a),b\rangle, &&R^{-\ast}(a\otimes b)=\langle \overline{R}(b),a \rangle. 
\end{align}
Lemma \ref{comodmodduality} produces a monoidal functor $\lcomod{A}\to \lmod{\leftexp{\cop}{H}}$. Using the weak quasitriangular structure, we can define braidings not just on $A$-comodules, but one of the objects can be an $H$-module. Indeed, let $V$ be a left $A$-comodule with coaction $\delta_V$, and $W$ a left $H^{\cop}$-module with action $a_W$, then the morphisms
$\Psi_{W,V}\colon W\otimes V\to V\otimes W$ given by 
\begin{equation}
\Psi_{W,V}=(\ide_V\otimes a_W\tau_{W,A})(\tau_{W,V}\otimes R)(\ide_W\otimes \tau_{A,V}\delta_V)
\end{equation}
and
\begin{equation}
\Psi^{-1}_{V,W}=(\ide_V\otimes a_W\tau_{W,A})(\tau_{W,V}\otimes \overline{R})(\ide_W\otimes \tau_{A,V}\delta_V)
\end{equation}
are morphisms of left $A$-modules (where $V$ is an $A$-module via the induced action from Lemma \ref{comodmodduality}). Here, $\tau_{V,W}$ denotes the swap-map $v\otimes w\mapsto w\otimes v$ making $\Vect$ symmetric monoidal. Further, the maps $\tau_{W,V}$ (and $\tau^{-1}_{W,V}$) are mutually inverse braidings provided that the $A$-action on $W$ is induced from a coaction as in Lemma \ref{comodmodduality}.
\end{remark}

Analogues of Corollary \ref{comodulealgebracor} and \ref{comodulealgebracor2} hold in this setup. We first give presentations for the braided cross products that appear in the statements.

\begin{lemma}
Assume given a weak quasitriangular pair of Hopf algebras $A$, $H$ as above, and $B,C$ weakly dual bialgebras in $\lcomod{A}$. Further let $D$ be a $B$-comodule algebra in $\cB=\lmod{H}$.  Then the relation Equation (\ref{crossproduct2}) for $D\rtimes \leftexp{\cop}{C}$ becomes
\begin{align}
\begin{split}cd&=d^{\overline{(0)}(0)}(\overline{R}(d^{\overline{(0)}(-1)})\triangleright c_{(1)})\ev(c_{(2)},d^{\overline{(-1)}})\\&=(R^{-\ast}({c_{(1)}}^{(-1)})\triangleright d^{\overline{(0)}}){c_{(1)}}^{(0)}\ev(c_{(2)},d^{\overline{(-1)}}),\end{split}
\end{align}
for all $d\in D, c\in C$.
Here, $\delta_B(d)=d^{\overline{(-1)}}\otimes d^{\overline{(0)}}$ denotes the left $B$-coaction on $D$, while $\delta_A(d)=d^{(-1)}\otimes d^{(0)}$ denotes the left $A$-coaction on $D$.

Further, for $B,C$ weakly dual Hopf algebras and $D\,'$ a right $C$-comodule algebra in $\cB$, the defining relation Equation (\ref{crossrelation2}) for $D\,'\rtimes B$ becomes
\begin{align}
ba&=d^{(0)\overline{(0)}}(R(d^{(-1)})\triangleright b)_{(2)}\ev(d^{(0)\overline{(-1)}},S(R(d^{(-1)})\triangleright b)_{(1)})), && \forall d\in D\,', b\in B.
\end{align}
Here, $\delta_C(d)=d^{\overline{(0)}}\otimes d^{\overline{(-1)}}\in D\,'\otimes C$ denotes the right $C$-coaction on $D\,'$, and $\delta_A(d)=d^{(-1)}\otimes d^{(0)}$ denotes the left $A$-coaction on $D\,'$.
\end{lemma}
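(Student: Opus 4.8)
The two asserted presentations are the weak-quasitriangular reformulations of the cross relations \eqref{crossproduct2} and \eqref{crossrelation2}. The plan is to observe that $D\rtimes\leftexp{\cop}{C}$ and $D'\rtimes B$ are built by the same general cross product construction \eqref{crossproduct} as in the quasitriangular case, now using the braiding on $\cB=\lmod{H}$ supplied by the weak quasitriangular pair $A,H$ (the explicit morphisms $\Psi_{W,V}$ and $\Psi^{-1}_{V,W}$ of the Remark following Definition \ref{braideddrin3}). Consequently the abstract relations \eqref{crossproduct2} and \eqref{crossrelation2} still hold, and all that remains is to rewrite the tensor legs of the (no longer available) universal R-matrix. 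The tool for this is the translation dictionary from the proof of Definition \ref{braideddrin3}: since $B$, $C$, and the comodule algebra all lie in $\lcomod{A}$, every action of an R-matrix leg on such an object $v$ may be replaced by an application of $R$, $R^{-\ast}$, $\overline{R}$, or $\overline{R}^{-\ast}$ to the $A$-coaction $v^{(-1)}\otimes v^{(0)}$.

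For the first presentation I would start from \eqref{crossproduct2} written for $D$, namely $cd=(R^{-(1)}\triangleright d^{\overline{(0)}})(R^{-(2)}\triangleright c_{(1)})\ev(c_{(2)},d^{\overline{(-1)}})$, where $d^{\overline{(-1)}}\otimes d^{\overline{(0)}}$ is the $B$-coaction. Both $c_{(1)}$ and $d^{\overline{(0)}}$ carry $A$-coactions, so either R-leg can be eliminated. Applying $R^{-(1)}\otimes(R^{-(2)}\triangleright c_{(1)})=R^{-\ast}({c_{(1)}}^{(-1)})\otimes {c_{(1)}}^{(0)}$ to the $C$-factor produces the second displayed identity, while applying $R^{-(2)}\otimes(R^{-(1)}\triangleright d^{\overline{(0)}})=\overline{R}(d^{\overline{(0)}(-1)})\otimes d^{\overline{(0)}(0)}$ to the $D$-factor produces the first. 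Because both simply rewrite the single element $(R^{-(1)}\triangleright d^{\overline{(0)}})(R^{-(2)}\triangleright c_{(1)})$, the two expressions agree without any further argument.

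For the second presentation I would take \eqref{crossrelation2} with $d\in D'$ in place of $a$ and apply the dictionary entry $R^{(1)}\otimes(R^{(2)}\triangleright d)=R(d^{(-1)})\otimes d^{(0)}$ to the comodule-algebra factor, using the left $A$-coaction $d^{(-1)}\otimes d^{(0)}$. Then $R^{(2)}\triangleright d$ becomes $d^{(0)}$, whose right $C$-coaction is $d^{(0)\overline{(0)}}\otimes d^{(0)\overline{(-1)}}$, and $R^{(1)}\triangleright b$ becomes $R(d^{(-1)})\triangleright b$; reading off the terms yields exactly the claimed relation for $D'\rtimes B$.

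The computation is essentially mechanical, so the main obstacle is purely bookkeeping: correctly matching each dictionary substitution to its tensor factor and respecting the order of the nested coactions (for instance, that $d^{(0)\overline{(0)}}$ means first the $A$-coaction and then the right $C$-coaction on the resulting component), which matters because the braiding is non-symmetric. The one point worth checking is that carrying the unused R-leg through before it acts is legitimate; this is guaranteed by the compatibility of the $A$-coaction with the defining $B$- (resp.\ $C$-) coaction on the comodule algebra, together with the axioms of Definition \ref{weakquasi}, all of which are already packaged into the dictionary. As a cross-check, one can instead compute directly from the product \eqref{crossproduct} using the explicit braiding morphisms of that Remark, which must reproduce the same formulas.
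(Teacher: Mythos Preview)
Your proposal is correct and follows precisely the approach the paper intends: the paper states this lemma without proof, treating it as a routine translation of Equations~(\ref{crossproduct2}) and~(\ref{crossrelation2}) via the dictionary established in the proof sketch following Definition~\ref{braideddrin3}. Your identification of which dictionary entry applies to which tensor factor, and your remark that the two displayed forms in the first part arise from eliminating the R-leg on the $C$-side versus the $D$-side, is exactly right.
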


\begin{corollary}\label{comodulealgebracor3}
Let $D$ be a $B$-comodule algebra in $\cB=\lmod{H}$. Then $D\rtimes \leftexp{\cop}{C}\rtimes H$ is a $\Drin_H(C,B)$-comodule algebra (with $D\rtimes \leftexp{\cop}{C}$ as a comodule subalgebra). The coaction $\delta_{\Drin}$ is given by
\begin{align}
\delta_{\Drin}(d)&=d^{\overline{(-1)}}{\overline{R}}^{-\ast}(d^{\overline{(0)}(-1)})\otimes d^{\overline{(0)}(0)}, \\
\delta_{\Drin}(c)&=R^{-\ast}({c_{(1)}}^{(-1)})c_{(2)}\otimes {c_{(1)}}^{(0)}, \\
 \delta_{\Drin}(h)&=h_{(2)}\otimes h_{(1)},
\end{align}
for $d\in D$, $c\in C$, $h\in H$, $\delta_B(d)=d^{\overline{(-1)}}\otimes d^{\overline{(0)}}$ denoting the left $B$-coaction on $D$, and $\delta_A(d)=d^{(-1)}\otimes d^{(0)}$ denoting the left $A$-coaction on $D$.
\end{corollary}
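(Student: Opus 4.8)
The plan is to follow the categorical argument in the proof of Corollary \ref{comodulealgebracor} step by step, replacing the genuine quasitriangular structure by the weak quasitriangular pair $A,H$ throughout, and then to translate the resulting R-matrix expressions into the weakly quasitriangular language. First I would apply the monoidal functor $\Phi_B^{\leftexp{\cop}{C}}$ of Lemma \ref{comodmodduality}, using the duality pairing of $B$ and $C$, to turn the $B$-coaction on $D$ into a $\leftexp{\cop}{C}$-action; this yields a functor $\Phi_D\colon\lmod{D}(\lcomod{B})\to\lmod{D}(\lmod{\leftexp{\cop}{C}})$. In parallel, the monoidal functor $\Psi\colon\lYD{B}\to\leftexp{C,B}{\mathbf{YD}}$ from Proposition \ref{drinfeldprop} remains available, since Definition \ref{braideddrin3} already supplies $\Drin_H(C,B)$ as a Hopf algebra in the weakly quasitriangular setting, with $\leftexp{C,B}{\mathbf{YD}}\simeq\lmod{\Drin_H(C,B)}$.

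Next I would invoke Theorem \ref{mainthm}: for $V\in\lYD{B}$ and $W\in\lmod{D}(\lcomod{B})$, the tensor product $V\triangleright W$ lies again in $\lmod{D}(\lcomod{B})$, with $D$-action and $B$-coaction as in Equation (\ref{Aactioneq}). Applying $\Phi_D$ identifies $\Phi_D(V\triangleright W)$ with $\Psi(V)\otimes\Phi_D(W)$ as a $\leftexp{\cop}{C}$-module while leaving the $D$-action unchanged. Passing through the equivalences $\lmod{D}(\lmod{\leftexp{\cop}{C}})\simeq\lmod{D\rtimes\leftexp{\cop}{C}\rtimes H}$ and $\leftexp{C,B}{\mathbf{YD}}\simeq\lmod{\Drin_H(C,B)}$, together with the reconstruction statement of \cite{Lau}*{Proposition 3.8.4}, then upgrades this categorical tensor action into the claimed $\Drin_H(C,B)$-comodule algebra structure on $D\rtimes\leftexp{\cop}{C}\rtimes H$, with $D\rtimes\leftexp{\cop}{C}$ as a comodule subalgebra.

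It remains to extract the explicit coaction. Here I would take the formulas of Corollary \ref{comodulealgebracor}, which are phrased through the tensor legs of the universal R-matrix, and rewrite them via the substitution dictionary recorded in the proof of Definition \ref{braideddrin3}. Substituting $R^{(2)}\otimes(R^{(1)}\triangleright v)=\overline{R}^{-\ast}(v^{(-1)})\otimes v^{(0)}$ with $v=d^{\overline{(0)}}$ turns $\delta_{\Drin}(a)=a^{(-1)}R^{(2)}\otimes(R^{(1)}\triangleright a^{(0)})$ into $\delta_{\Drin}(d)=d^{\overline{(-1)}}\overline{R}^{-\ast}(d^{\overline{(0)}(-1)})\otimes d^{\overline{(0)}(0)}$, and substituting $R^{-(1)}\otimes(R^{-(2)}\triangleright v)=R^{-\ast}(v^{(-1)})\otimes v^{(0)}$ with $v=c_{(1)}$ turns $\delta_{\Drin}(c)=R^{-(1)}c_{(2)}\otimes(R^{-(2)}\triangleright c_{(1)})$ into the stated formula; the coaction on $H$ becomes $h_{(2)}\otimes h_{(1)}$ because of the $\leftexp{\cop}{H}^{\oop}$ convention used in Definition \ref{braideddrin3}.

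The main obstacle is bookkeeping rather than conceptual. One must carry two distinct comodule structures on $D$ simultaneously---the $B$-coaction $d\mapsto d^{\overline{(-1)}}\otimes d^{\overline{(0)}}$ valued in $B$ and the ambient $A$-coaction $d\mapsto d^{(-1)}\otimes d^{(0)}$---and apply the dictionary to the correct $A$-comodule legs; in the weakly quasitriangular case the relevant braiding is supplied not by a single universal R-matrix but only between $A$-comodules and $H$-modules, as in the Remark following Definition \ref{braideddrin3}. Checking that the induced maps are genuine morphisms of $A$-comodules and $H$-modules therefore amounts to verifying the braiding axioms recorded there, but each such verification runs exactly parallel to the corresponding step in the quasitriangular proof of Corollary \ref{comodulealgebracor}.
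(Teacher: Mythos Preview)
Your proposal is correct and follows exactly the route the paper intends: the paper states Corollary~\ref{comodulealgebracor3} without proof, relying on the sentence ``Analogues of Corollary~\ref{comodulealgebracor} and~\ref{comodulealgebracor2} hold in this setup,'' and your argument spells out precisely this analogy together with the substitution dictionary from the proof of Definition~\ref{braideddrin3}. The bookkeeping remarks you make about the two coactions and the $\leftexp{\cop}{H}^{\oop}$ convention are accurate and account for the differences in the displayed formulas.
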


\begin{corollary}\label{comodulealgebracor4}
Assume given a weak quasitriangular pair $A,H$ and weakly dual bialgebras $B,C$ in $\lcomod{A}$. Let $D\,'$ be a right $C$-comodule algebra in $\cB=\lmod{H}$. Then $D\,'\rtimes B\rtimes H$ is a $\Drin_H(C,B)$-comodule algebra (with $D\,'\rtimes B$ as a comodule subalgebra), where the left $B$-module structure is obtained from the right $C$-comodule structure via Equation (\ref{inducedBaction}). The coaction $\delta_{\Drin}$ is given by
\begin{align}
\delta_{\Drin}(d)&=R^{-\ast}(d^{\overline{(0)}(-1)})d^{\overline{(-1)}}\otimes d^{\overline{(0)}(0)}, \\
\delta_{\Drin}(c)&=b_{(1)}{\overline{R}}^{-\ast}({b_{(2)}}^{(-1)})\otimes {b_{(2)}}^{(0)}, \\
 \delta_{\Drin}(h)&=h_{(2)}\otimes h_{(1)},
\end{align}
for $d\in D\,'$, $c\in C$, $h\in H$, where $\delta_C(d)=d^{\overline{(0)}}\otimes d^{\overline{(-1)}}$ denotes the right $C$-coaction on $D\,'$, and $\delta_A(d)=d^{(-1)}\otimes d^{(0)}$ the left $A$-coaction on $D\,'$.
\end{corollary}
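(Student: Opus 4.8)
The plan is to follow the proof of Corollary \ref{comodulealgebracor2} essentially verbatim, replacing the quasitriangular structure by the weak quasitriangular pair $(A,H)$. The abstract input — the tensor product action of Corollary \ref{maincor2} together with the monoidal functor $\rYD{C}\to\lmod{\Drin_H(C,B)}$ from the preceding lemma — is formulated purely in terms of the braiding of $\cB$, so the structural conclusion that $D\,'\rtimes B\rtimes H$ is a $\Drin_H(C,B)$-comodule algebra is insensitive to \emph{how} that braiding arises. The only remaining work is to extract the explicit coaction formulas and to confirm that the braiding supplied by the weak quasitriangular pair (as described in the Remark following Definition \ref{braideddrin3}) is legitimately the one being used.

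First I would record the translation dictionary from the proof of Definition \ref{braideddrin3}: acting by a single leg of a universal R-matrix on an $A$-comodule $V$, $v\mapsto v^{(-1)}\otimes v^{(0)}$, is rewritten via
\begin{align*}
R^{(1)}\otimes(R^{(2)}\triangleright v)&=R(v^{(-1)})\otimes v^{(0)}, &R^{-(1)}\otimes(R^{-(2)}\triangleright v)&=R^{-\ast}(v^{(-1)})\otimes v^{(0)},\\
R^{-(2)}\otimes(R^{-(1)}\triangleright v)&=\overline{R}(v^{(-1)})\otimes v^{(0)}, &R^{(2)}\otimes(R^{(1)}\triangleright v)&=\overline{R}^{-\ast}(v^{(-1)})\otimes v^{(0)}.
\end{align*}
Applying these substitutions to the three coaction formulas of Corollary \ref{comodulealgebracor2} yields the asserted formulas for $\delta_{\Drin}$: the coaction on the $B$-generators is the coproduct $\Delta(b)=b_{(1)}\overline{R}^{-\ast}({b_{(2)}}^{(-1)})\otimes{b_{(2)}}^{(0)}$ of $\Drin_H(C,B)$ from Definition \ref{braideddrin3} restricted to $B\rtimes H$, the coaction on $H$ is $h\mapsto h_{(2)}\otimes h_{(1)}$, and the coaction on $D\,'$ comes from the $D\,'$-action of Corollary \ref{maincor2}. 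For the last of these I would use the passage from a right $C$-coaction to a left $\leftexp{\cop}{C}$-action of Lemma \ref{rightleft}, combined with the induced $B$-action of Lemma \ref{comodmodduality} (i.e. Equation (\ref{inducedBaction})), so that the dictionary turns $R^{-(1)}a^{(-1)}\otimes(R^{-(2)}\triangleright a^{(0)})$ into $R^{-\ast}(d^{\overline{(0)}(-1)})d^{\overline{(-1)}}\otimes d^{\overline{(0)}(0)}$, exactly the claimed expression.

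It then remains to check that $\delta_{\Drin}$ is an algebra map, i.e. that it respects the defining relations of $D\,'\rtimes B\rtimes H$: the subalgebra relations, the relation (\ref{braideddrineq2b}) between $H$ and the rest, and the $B$--$D\,'$ cross relation from the lemma immediately preceding this corollary. The categorical construction guarantees this abstractly, but I would verify the $B$--$D\,'$ relation by hand, since that is the only place where the weak quasitriangular maps $R^{-\ast}$ and $\overline{R}^{-\ast}$ interact simultaneously with both the $A$-coaction and the $C$-coaction carried by $D\,'$.

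The main obstacle is conceptual rather than computational: unlike the quasitriangular case, the weak quasitriangular pair does \emph{not} furnish a braiding on all of $\lmod{H}$, but only a conditionally invertible braiding between $A$-comodules and $H$-modules, per the Remark after Definition \ref{braideddrin3}. I must therefore track, throughout the categorical arguments imported from Corollary \ref{maincor2} and Lemmas \ref{rightleft} and \ref{comodmodduality}, precisely which tensor factors carry $A$-comodule structures and which carry $H$-module structures, and confirm that every braiding morphism actually invoked lies in the admissible class where $\Psi$ and $\Psi^{-1}$ are genuine mutual inverses. Only once this bookkeeping is settled does the formal substitution of the dictionary above faithfully reproduce the abstract construction and hence the stated coaction.
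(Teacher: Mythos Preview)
Your proposal is correct and matches the paper's approach: the paper states this corollary without proof, immediately after remarking that ``Analogues of Corollary~\ref{comodulealgebracor} and~\ref{comodulealgebracor2} hold in this setup,'' so the intended argument is precisely the translation-via-dictionary you carry out. Your explicit attention to the bookkeeping issue---that the weak quasitriangular pair furnishes only a partial braiding between $A$-comodules and $H$-modules rather than a genuine braiding on all of $\lmod{H}$---is a point the paper leaves implicit (it tacitly assumes $D\,'$ carries an $A$-comodule structure, as the formulas involving $d^{(-1)}\otimes d^{(0)}$ require), so your care there is an improvement rather than a deviation.
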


\subsection{Quantum Group Examples}\label{qgroupex}

We now explore the application of the results from Corollary \ref{ydtensoractions} to Lustig's version of the quantum groups $U_q(\fr{g})$. Denote $\mF:=\Bbbk(q)$ for a generic variable $q$ over $\Bbbk$.
Further fix a \emph{Cartan datum}, i.e. an index set $I$ together with a symmetric bilinear form $\cdot$ on the free abelian group $\mZ \langle I\rangle $, such that $i\cdot i$ is even, and 
\begin{align*}a_{ij}:=2\frac{i\cdot j}{i\cdot i}\in \mZ_{\leq 0},&&\forall i\neq j.
\end{align*}
Following \cite{Maj1}*{Section 4}, a weak quasitriangular structure can be defined as follows. We let $A=\mF \mZ \langle I\rangle $ be the group algebra associated to $\mZ \langle I\rangle $ with generators denoted by $g_i$ for $i\in I$. The dually paired Hopf algebra $H$ is also a copy of the same group algebra, with $\mF$-basis denoted by $K_\mu$ for $\mu\in \mZ \langle I\rangle $. A Hopf algebra pairing of $H$ and $A$ is obtained via
\begin{align}
\langle K_i, g_j\rangle=q^{a_{ij}}.
\end{align}
A weak quasitriangular structure is now given by the maps $R,\overline{R}$ from $A$ to $H$ given by
\begin{align*}
R(g_i)&=K_i^{i\cdot i/2}, & \overline{R}(g_i)&=K_i^{-i\cdot i/2}.
\end{align*}

Note that the category of left $A$-comodules is braided via the dual R-matrix $R(g_i,g_j)=q^{i\cdot j}$. We now define $E:=\mF\langle e_i\mid i\in I\rangle$ to be the left $A$-comodule where $\delta(e_i)=g_i\otimes e_i$, and $F:=\mF \langle f_i\mid i\in I\rangle$ the left $A$-comodule left dual to $E$, i.e. $\delta(f_i)=g_{i}^{-1}\otimes f_i$, and duality pairing given by
$$\langle f_i,e_i\rangle=\frac{\delta_{i,j}}{q_i-q_i^{-1}},$$
where $q_i:=q^{i\cdot i/2}$.

Using the weak quasitriangular structure, we can induce left $A$-module structures on $E, F$ via the dual R-matrix
$R(g_i,g_j)= q^{i\cdot j}$. This way, $E$ and $F$ become dually paired Yetter--Drinfeld modules over $A$. Hence, using the general theory of Nichols algebras (see e.g. \cite{AS}*{Section~2}), we obtain dually paired braided Hopf algebras $U_q(\mathfrak{n}^+):=\cB(E)$ and $U_q(\mathfrak{n}^-):=\cB(F)$ which are primitively generated. That is
\begin{align}
\Delta (e_i)&=e_i\otimes 1+1\otimes e_i, &\Delta (f_i)&=f_i\otimes 1+1\otimes f_i.
\end{align}
The pairing $\langle~,~\rangle$ of $F$ and $E$ extends uniquely to a perfect pairing $\ev\colon U_q(\mathfrak{n}^-)\otimes U_q(\mathfrak{n}^+)\to \mF$. This result is due to Lustig, see \cite{Lus}*{Proposition 1.2.3}.

\begin{theorem}[Majid]\label{quantumgroupthm}
There exists an isomorphism of Hopf algebras between the braided Drinfeld double $\Drin_H(U_q(\mathfrak{n}^-),U_q(\mathfrak{n}^+))$ and $U_q(\fr{g})$, where $\fr{g}$ denotes the semi-simple Lie algebra obtained from the given Cartan datum. 
\end{theorem}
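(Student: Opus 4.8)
The plan is to exhibit the isomorphism concretely on generators and verify it respects all the structure, using the explicit presentation of the weak quasitriangular braided Drinfeld double from Definition \ref{braideddrin3}. I would write $U_q(\fr{g})$ in its Drinfeld--Jimbo/Lusztig presentation, with generators $E_i, F_i$ ($i\in I$) and a Cartan part, subject to the $q$-Serre relations, the commutation relations of the Cartan generators with $E_i,F_i$, and the mixed relation $[E_i,F_j]=\delta_{ij}(K_i-K_i^{-1})/(q_i-q_i^{-1})$. The candidate map $\vartheta\colon \Drin_H(U_q(\fr{n}^-),U_q(\fr{n}^+))\to U_q(\fr{g})$ sends the primitive generators $e_i\mapsto E_i$, $f_i\mapsto F_i$, and the group-likes $K_\mu\in H$ to the Cartan generators. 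Since $B=U_q(\fr{n}^+)=\cB(E)$ and $C=U_q(\fr{n}^-)=\cB(F)$ are by construction the Nichols algebras, their defining relations \emph{are} the $q$-Serre relations; thus $\vartheta$ automatically respects all relations internal to the positive and negative parts, and only the cross relations and coalgebra structure need checking.

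The heart of the verification is the three families of cross relations. First, the relations (\ref{braideddrineq2b}) with $h=K_\mu$ group-like collapse (using $\Delta K_\mu=K_\mu\otimes K_\mu$) to $K_\mu e_i=(K_\mu\triangleright e_i)K_\mu$, and the induced $H$-action coming from the weak quasitriangular structure evaluates $K_\mu\triangleright e_i$ to a scalar $q$-power of $e_i$ governed by the Cartan datum (and dually on $f_i$), reproducing the Cartan commutation relations. Second, specializing the cross relation (\ref{braideddrineq1b}) to the primitive generators $b=e_i$, $c=f_j$ and unwinding the maps $R^{-\ast},\overline{R}^{-\ast}$ via the dictionary in the proof sketch of Definition \ref{braideddrin3} should yield a braided commutator whose right-hand side is $\ev(f_j,e_i)$ times a difference of group-likes; the normalization $\langle f_i,e_i\rangle=1/(q_i-q_i^{-1})$ of the pairing is precisely what produces the coefficient $1/(q_i-q_i^{-1})$ in the mixed relation. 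Third, I would verify the coalgebra structure on generators: the coproduct formulas of Definition \ref{braideddrin3} give $\Delta(e_i)=e_i\otimes 1+\overline{R}^{-\ast}(g_i)\otimes e_i$, and identifying $\overline{R}^{-\ast}(g_i)$ with the relevant power of $K_i$ recovers the Drinfeld--Jimbo coproduct $\Delta(E_i)=E_i\otimes 1+K_i\otimes E_i$ (with the mirror formula for $f_i$).

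Bijectivity then follows by comparing triangular decompositions: Definition \ref{braideddrin3} identifies $\Drin_H(C,B)\cong C\otimes H\otimes B$ as a vector space, which $\vartheta$ carries onto the triangular decomposition $U_q(\fr{n}^-)\otimes U_q(\fr{h})\otimes U_q(\fr{n}^+)$ of $U_q(\fr{g})$; since $\vartheta$ restricts to isomorphisms on each tensor factor, it is a linear isomorphism and hence a Hopf algebra isomorphism. I expect the main obstacle to be the bookkeeping in the second step: unwinding the four maps $R,\overline{R},R^{-\ast},\overline{R}^{-\ast}$ (which replace the legs of the universal $R$-matrix in this non-quasitriangular setting) inside (\ref{braideddrineq1b}), and confirming that after applying the antipode and pairing axioms the coefficients collapse to exactly the Drinfeld--Jimbo normalization rather than a rescaled variant. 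Since the statement is originally due to Majid \cite{Maj3}, one may alternatively transport his isomorphism through the conventions relating his pairing to the present one (replacing $H$ by $\leftexp{\cop}{H}^{\oop}$ and viewing $B$ as a right $H$-module), as indicated after Definition \ref{weakquasi}.
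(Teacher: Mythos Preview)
Your overall strategy matches the paper's: compute the presentation of $\Drin_H(U_q(\fr n^-),U_q(\fr n^+))$ from Definition~\ref{braideddrin3}, define the map on generators, and check relations and coproducts. However, the specific isomorphism you propose is wrong, and the issue is exactly the one you flag as a potential obstacle but underestimate.

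The coproduct formula in Definition~\ref{braideddrin3} applied to the primitive $f_i$ (which has $A$-coaction $\delta(f_i)=g_i^{-1}\otimes f_i$) gives
\[
\Delta(f_i)=f_i\otimes 1+R^{-\ast}(g_i^{-1})\otimes f_i=f_i\otimes 1+K_i^{i\cdot i/2}\otimes f_i,
\]
with a \emph{positive} power of $K_i$, not the ``mirror formula'' you expect. Likewise the cross relation~(\ref{braideddrineq1b}) on primitives yields the $q$-commutator
\[
f_je_i-q^{-i\cdot j}e_if_j=\frac{\delta_{i,j}}{q_i-q_i^{-1}}\bigl(1-K_i^{i\cdot i}\bigr),
\]
not the standard commutator $[E_i,F_j]=\delta_{ij}(K_i-K_i^{-1})/(q_i-q_i^{-1})$. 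So sending $f_i\mapsto F_i$ fails both as a coalgebra map and on the mixed relation; this is not a scalar normalization issue but a genuine group-like discrepancy. The paper fixes this by instead setting $\varphi(K_j^{-j\cdot j/2}f_j)=F_j$ (keeping $\varphi(e_i)=E_i$ and $\varphi(K_i)=K_i$), and then verifies directly that conjugating by $K_j^{-j\cdot j/2}$ converts the $q$-commutator into the usual commutator and produces Lusztig's coproduct on $F_j$. Your triangular-decomposition argument for bijectivity is fine once the map is corrected.
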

\begin{proof}
We reprove the result here as our presentation differs slightly from \cite{Maj1}.  The relations in Definition \ref{braideddrin3} show that $\Drin_H(U_q(\mathfrak{n}^-),U_q(\mathfrak{n}^+))$ is generated by $e_i$, $f_i$ and $K_i^{\pm 1}$ subject to the relations
\begin{align*}
f_je_i-q^{-i\cdot j}e_if_j&=\frac{\delta_{i,j}}{q_i-q_i^{-1}}(1-K_i^{i\cdot i}),&
K_ie_j&=q^{i\cdot j}e_jK_i,&
K_if_j&=q^{-i\cdot j}f_jK_i.
\end{align*}
The coproduct is given by
\begin{align*}
\Delta(K_i)&=K_i\otimes K_i, &\Delta(e_i)&=e_i\otimes 1+K_i^{i\cdot i/2}\otimes e_i, &\Delta(f_i)&=f_i\otimes 1+K_i^{i\cdot i/2}\otimes f_i.&
\end{align*}
We can define an isomorphism of bialgebras $$\varphi\colon \Drin_H(U_q(\mathfrak{n}^-),U_q(\mathfrak{n}^+))\longrightarrow U_q(\fr{g}), $$
where the latter is Lustig's version of the quantum group from \cite{Lus}*{Chapter 3}, (identifying the parameter $\nu$ with $q$) by
\begin{align*}
\varphi(K_i)&=K_i, &\varphi(e_i)&=E_i, & \varphi(K_j^{-j\cdot j/2}f_j)=F_j.
\end{align*}
To verify this, note in particular that
\begin{align*}
e_i(K_j^{-j\cdot j/2}f_j)&=q^{-i\cdot j}K_j^{-j\cdot j/2}e_if_j
=(K_j^{-j\cdot j/2}f_j)e_i+ \frac{\delta_{i,j}}{q_i-q_i^{-1}}(K_i^{i\cdot i/2}-K_i^{-i\cdot i/2}).
\end{align*}
Further, $\Delta(K_j^{-j\cdot j/2}f_j)=K_j^{-j\cdot j/2}f_j\otimes K_j^{-j\cdot j/2}+1\otimes K_j^{-j\cdot j/2}f_j$, which is the same as the coproduct formula for $F_j$ in $U_q(\fr{g})$, cf. \cite{Lus}*{Proposition 3.1.4}. It is clear that $\varphi$ is invertible by construction. Since the antipode is uniquely determined, we obtain an isomorphism of Hopf algebras.
\end{proof}
In the following, we shall denote $\Drin_H(U_q(\mathfrak{n}^-),U_q(\mathfrak{n}^+))$ by $U_q(\fr{g})$ and use the presentation with $q$-commutators whenever using lower case letter $e_i, f_j$ to denote the primitive generators.

\begin{example}
Consider the \emph{$q$-Weyl algebra}, which is defined as the braided Heisenberg double $D_q(\fr{g}):=\Heis_H(U_q(\mathfrak{n}^-),U_q(\mathfrak{n}^+))$. It is the algebra generated by $e_i,f_i, K_i$ for $i\in I$ subject to the relations
\begin{align*}
f_ie_j-q^{i\cdot j}e_jf_i&=\frac{\delta_{i,j}}{q_i-q_i^{-1}},&
K_ie_j&=q^{i\cdot j}e_jK_i,&
K_if_j&=q^{-i\cdot j}f_jK_i.
\end{align*}
Similarly to $\Drin_H(U_q(\mathfrak{n}^-),U_q(\mathfrak{n}^+))$, it also has a triangular decomposition as the $\mF$-vector space $U_q(\mathfrak{n}^-)\otimes H\otimes U_q(\mathfrak{n}^+)$. This algebra contains as a subalgebra the quantum Weyl algebra $A_C$ of \cite{Jos}*{Section 3.1}.
\end{example}

\begin{corollary}\label{quantumweyl}
The quantum Weyl algebra $D_q(\mathfrak{g})$ is a left comodule algebra over the quantum group $U_q(\fr{g})$.
\end{corollary}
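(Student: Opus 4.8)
The plan is to exhibit $D_q(\fr{g})$ as one of the cross product comodule algebras produced by the weak quasitriangular machinery of Section \ref{weakquasisect}, and then transport the statement across the Hopf algebra isomorphism of Theorem \ref{quantumgroupthm}. Recall from Section \ref{qgroupex} that $A = \mF\mZ\langle I\rangle$ and the copy $H$ (with basis $K_\mu$) form a weak quasitriangular pair of Hopf algebras, that $B := U_q(\mathfrak{n}^+)$ and $C := U_q(\mathfrak{n}^-)$ are weakly dual Hopf algebras in $\lcomod{A}$, and that consequently $\cB = \lmod{H}$. By Theorem \ref{quantumgroupthm} there is a Hopf algebra isomorphism $\Drin_H(U_q(\mathfrak{n}^-),U_q(\mathfrak{n}^+)) \cong U_q(\fr{g})$.

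First I would take $D := B^{\op{reg}} = U_q(\mathfrak{n}^+)$ equipped with the regular left $B$-comodule structure given by its own coproduct, which makes it a left $B$-comodule algebra in $\cB$ (Example \ref{regularcoaction}). With this choice the iterated cross product $D \rtimes \leftexp{\cop}{C} \rtimes H$ is, by definition, the braided Heisenberg double $\Heis_H(U_q(\mathfrak{n}^-),U_q(\mathfrak{n}^+)) = D_q(\fr{g})$ (Example \ref{braidedheis}). I would then invoke Corollary \ref{comodulealgebracor3} for this $D$, which yields directly that $D \rtimes \leftexp{\cop}{C} \rtimes H = D_q(\fr{g})$ is a left $\Drin_H(U_q(\mathfrak{n}^-),U_q(\mathfrak{n}^+))$-comodule algebra; under the isomorphism of Theorem \ref{quantumgroupthm} this is exactly a left $U_q(\fr{g})$-comodule algebra, which is the assertion.

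As a consistency check, I would specialize the coaction formulas of Corollary \ref{comodulealgebracor3} to the regular coaction $\delta_B = \Delta_B$ on the primitive generators. For $e_i \in U_q(\mathfrak{n}^+)$, using $\delta_A(e_i) = g_i \otimes e_i$ and $\overline{R}^{-\ast}(g_i) = K_i^{i\cdot i/2}$, one recovers $\delta_{\Drin}(e_i) = e_i \otimes 1 + K_i^{i\cdot i/2} \otimes e_i$, matching the coproduct of Theorem \ref{quantumgroupthm}, with analogous formulas for $f_i$ and $K_\mu$. The only point needing care --- already absorbed into the general statement --- is that $H$ is infinite-dimensional and not quasitriangular, so Corollary \ref{comodulealgebracor} does not apply directly and one must instead pass to its weak quasitriangular counterpart, Corollary \ref{comodulealgebracor3}, whose hypotheses (a weak quasitriangular pair $A,H$, weakly dual $B,C$ in $\lcomod{A}$, and a $B$-comodule algebra $D$ in $\lmod{H}$) are precisely those assembled for the quantum group datum in Section \ref{qgroupex}. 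No computation beyond this invocation is required.
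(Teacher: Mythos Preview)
Your proposal is correct and follows essentially the same approach as the paper: apply Corollary \ref{comodulealgebracor3} to $D = B^{\op{reg}} = U_q(\mathfrak{n}^+)$ to obtain a $\Drin_H(U_q(\mathfrak{n}^-),U_q(\mathfrak{n}^+))$-comodule algebra structure on $D_q(\fr{g})$, then transport along the Hopf algebra isomorphism of Theorem \ref{quantumgroupthm}. Your write-up is in fact more detailed than the paper's, which compresses the argument into a single sentence and leaves the identification $D = B^{\op{reg}}$ and the specialization of the coaction formulas implicit.
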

\begin{proof}
This result was proved in \cite{Lau}*{Section~3} using a slightly different presentation. Corollary \ref{comodulealgebracor3} implies a coaction given the coproduct of $U_q(\fr{g})$ (under the isomorphism of Theorem \ref{quantumgroupthm}, making the latter a $U_q(\fr{g})$-comodule algebra.
\end{proof}

\begin{example}
Using the trivial $U_q(\fr{n}^+)$-coaction on itself, we obtain the algebra $U_q(\fr{n}^+)\otimes_q U_q(\fr{n}^-)$ with defining relation
\begin{align*}
f_ie_j&=q^{i\cdot j}e_jf_i.
\end{align*}
Now $U_q(\fr{n}^+)\otimes_q U_q(\fr{n}^-)$ is a $U_q(\mathfrak{g})$-comodule algebra with coaction given by
\begin{align*}
\delta(e_i)&=K_i^{i\cdot i}\otimes e_i,&
\delta(f_i)&=f_i\otimes 1+K_i^{i\cdot i/2}\otimes f_i.
\end{align*}
\end{example}

\section{A Morphism of 2-Cocycle Spaces}\label{2cocycles}

In this section, we provide an explicit map inducing bialgebra $2$-cocycles on the braided Drinfeld double $\Drin_H(C,B)$ from $2$-cocycles over $B$ and $C$ within the category $\cB=\lmod{H}$. 

\subsection{Preliminaries on 2-Cocycle Twists and Cleft Objects}

Most of the material of this subsection can be found in \cite{Mas} and reference therein. We let $B$ be a bialgebra over $\Bbbk$.

\begin{definition}
A \emph{right $2$-cocycle} of $B$ is a convolution-invertible $\Bbbk$-linear map $\sigma \colon B\otimes B\to \Bbbk$, such that for $h,k,l\in B$:
\begin{align}\label{cocyclecond1}
\sigma(h\otimes k_{(1)}l_{(1)})\sigma(k_{(2)}\otimes l_{(2)})&=\sigma(h_{(1)}k_{(1)}\otimes l) \sigma(h_{(2)}\otimes k_{(2)}),\\
\sigma(h\otimes 1)&=\sigma(1\otimes h)=\varepsilon(h).\label{cocyclecond2}
\end{align}
\end{definition}
See also \cite{Maj4}*{Section~6} for a general theory of cohomology of a bialgebra where the space $C^2(B,\Bbbk)$ of $2$-cocycles appears naturally. 

Given a convolution invertible map $\beta\colon B\to \Bbbk$ and $\sigma\in C^2(B,\Bbbk)$, a new right $2$-cocycle can be given by 
\begin{equation}
\sigma^{\beta}(h\otimes k)=\beta^{-\ast}(h_{(1)}k_{(1)})\sigma(h_{(2)},k_{(2)})\beta(h_{(3)})\beta(k_{(3)}).
\end{equation}
We say that $\sigma$ and $\tau$ in $C^2(B,\Bbbk)$ are cohomologous if there exists such $\beta$ satisfying $\tau=\sigma^\beta$, and write $\sigma\sim \tau$ (cf. \cite{KS}*{Section 1.2.3} where left $2$-cocycles are used). This defines an equivalence relation on $C^2(B,\Bbbk)$. The quotient is denoted by $H^2(B,\Bbbk)$ and referred to as the \emph{second bialgebra cohomology space}.
Right $2$-cocycles can be used to twist a bialgebra:

\begin{proposition}
Let $\sigma$ be a right $2$-cocycle of $B$. Then $B$ obtains a new algebra structure $B_{\sigma}$ with product given for $g,h\in B$:
\begin{align}
g\cdot_\sigma h:=g_{(1)}\cdot h_{(1)}\sigma(g_{(2)},h_{(2)}).
\end{align}
Moreover, the coproduct can be viewed as a morphism of algebras 
\[
\Delta\colon B_{\sigma}\longrightarrow B\otimes B_{\sigma},
\]
making $B_\sigma$ a $B$-comodule algebra.
\end{proposition}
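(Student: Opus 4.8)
The plan is to verify two things about $B_\sigma$: first, that the twisted product $\cdot_\sigma$ is associative with unit $1_B$; second, that $\Delta$ is an algebra morphism $B_\sigma \to B \otimes B_\sigma$, where the target carries the tensor product algebra structure with $B$ in its undeformed form and $B_\sigma$ in the first tensor factor's deformed form on the right. The $2$-cocycle condition \eqref{cocyclecond1} is precisely the associativity constraint, and the normalization \eqref{cocyclecond2} together with convolution-invertibility of $\sigma$ gives the unit axiom, so the first part is essentially a direct unpacking.

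First I would check associativity. Expanding $(g\cdot_\sigma h)\cdot_\sigma k$ using the definition and coassociativity of $\Delta$ produces, after relabeling Sweedler indices, a product of the form $g_{(1)}h_{(1)}k_{(1)}$ times $\sigma(g_{(2)},h_{(2)})\sigma(g_{(3)}h_{(3)},k_{(2)})$, while $g\cdot_\sigma(h\cdot_\sigma k)$ gives the same leading term times $\sigma(h_{(2)},k_{(2)})\sigma(g_{(2)},h_{(3)}k_{(3)})$. Matching the two scalar factors is exactly Equation \eqref{cocyclecond1}, read with $h,k,l$ specialized appropriately. The unit axiom follows since $1\cdot_\sigma h = 1\cdot h_{(1)}\,\sigma(1,h_{(2)}) = h_{(1)}\varepsilon(h_{(2)}) = h$ by \eqref{cocyclecond2} and the counit axiom, and symmetrically for $h\cdot_\sigma 1$.

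Next I would verify that $\Delta$ is a morphism of algebras into $B\otimes B_\sigma$. On that tensor product the multiplication sends $(x\otimes y)\cdot(x'\otimes y')$ to $xx'\otimes (y\cdot_\sigma y')$, with no braiding since $\cB = \Vect$ here (the section works over $\Bbbk$). I would compute $\Delta(g\cdot_\sigma h) = \Delta\big(g_{(1)}h_{(1)}\big)\,\sigma(g_{(2)},h_{(2)})$, use that $\Delta$ is multiplicative for the \emph{original} product and cocommutes suitably to split the first leg from the second, and compare with $\Delta(g)\cdot\Delta(h)$ computed in $B\otimes B_\sigma$. The first tensor legs match immediately because they only see the undeformed product; the second legs again produce a scalar discrepancy governed by $\sigma$, which reduces to the cocycle identity. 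The key bookkeeping is tracking which copies of $\sigma$ act on which Sweedler components.

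The main obstacle is purely organizational rather than conceptual: keeping the Sweedler indices aligned through the nested applications of coassociativity, since both the twisted products and the coproduct $\Delta$ each split elements, producing up to three or four components that must be paired correctly against the two instances of $\sigma$. I expect no genuine difficulty beyond this, since the two required identities collapse term-by-term onto \eqref{cocyclecond1} and \eqref{cocyclecond2}; the statement is essentially the classical Doi--Takeuchi twist \cite{DT2} specialized to coefficients in $\Bbbk$, and the comodule algebra claim is immediate once multiplicativity of $\Delta\colon B_\sigma \to B\otimes B_\sigma$ is established, as coassociativity and counitality of the coaction are inherited unchanged from the original coalgebra structure of $B$.
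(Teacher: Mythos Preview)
Your proposal is correct and follows the standard direct verification. The paper itself does not supply a proof for this proposition: it is stated as background material, with the subsection opening ``Most of the material of this subsection can be found in \cite{Mas} and references therein.'' The closest the paper comes is the later Lemma~\ref{twistedproduct}, the braided analogue, whose proof is the same one-line observation you give (associativity of $m_\sigma$ unwinds to the cocycle identity via coassociativity and the bialgebra axiom; unitality to the normalization condition).

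One small correction to your sketch: for the comodule-algebra part, there is no ``scalar discrepancy governed by $\sigma$'' to resolve. Computing $\Delta(g\cdot_\sigma h)=g_{(1)}h_{(1)}\otimes g_{(2)}h_{(2)}\,\sigma(g_{(3)},h_{(3)})$ and $\Delta(g)\cdot\Delta(h)=g_{(1)}h_{(1)}\otimes(g_{(2)}\cdot_\sigma h_{(2)})=g_{(1)}h_{(1)}\otimes g_{(2)}h_{(2)}\,\sigma(g_{(3)},h_{(3)})$ in $B\otimes B_\sigma$ shows the two sides agree on the nose, using only coassociativity and the ordinary bialgebra axiom; the cocycle condition is not invoked here. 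Everything else in your plan is accurate.
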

If $\sigma\sim \tau$, then $B_\sigma$ and $B_\tau$ are isomorphic as $B$-comodule algebras via the map 
\begin{align*}\ide\ast\beta\colon  B_\sigma\longrightarrow B_\tau, && h\longmapsto h_{(1)}\beta(h_{(2)}).\end{align*} 
The notion of a $2$-cocycle twist of a bialgebra $B$ can also be reformulated in terms of \emph{cleft objects}.

\begin{definition}
Let $B$ be a bialgebra over $\Bbbk$. A \emph{left $B$-cleft object} is a left $B$-comodule  algebra $C$  together with a unit-preserving\footnote{In \cite{Mas} this is not a requirement, but $\phi$ can be chosen to preserve the unit.} isomorphism of left $B$-comodules $\phi\colon B\to C$ which is also invertible with respect to the convolution product. That is, there exists $\psi\colon B\to C$ such that
\begin{align}
m_C(\psi\otimes \phi)\Delta_B&=m_C(\phi\otimes \psi)\Delta_B=1_C\varepsilon_B.
\end{align}
\end{definition}

It is shown in \cite{DT} that given a left $B$-cleft object $(C,\phi)$, a right $2$-cocycle $\sigma$ can be obtained via
\begin{align}
\sigma(g\otimes h)&:=\phi(g_{(1)})\phi(h_{(1)})\psi(g_{(2)}h_{(2)})\in \Bbbk.
\end{align}
In particular, $\sigma$ takes values in the $B$-coinvariants of $C$, which are given by the one-dimensional space $\Bbbk$ as $C\cong B^{\op{reg}}$ as a $B$-comodules. Then $B_\sigma \cong C$ via $\phi$. In fact, every $2$-cocycle arises in this way \cite{Mas2}*{Proposition~1.4}. See also \cite{Sch4} for details on cleft and Galois objects and their relation to $2$-cocycles.

\begin{example}
Let $H$ be a dual quasitriangular Hopf algebra with dual R-matrix $R$. Then $R$ is a left $2$-cocycle and the convolution inverse $R^{-\ast}$ is a right $2$-cocycle. This can be proved in two steps \cite{Lu}. First, $R$ satisfies the dual quantum Yang-Baxter equation
\begin{equation}
R(g_{(1)},h_{(1)})R(g_{(2)},k_{(1)})R(h_{(2)},k_{(2)})= R(h_{(1)},k_{(1)})R(g_{(1)},k_{(2)})R(g_{(2)},h_{(2)}),
\end{equation}
using Equations (\ref{dualR1})--(\ref{dualR3}). From which we conclude, using the same equations, that
\begin{equation}
R(g_{(1)},h_{(1)})R(g_{(2)}h_{(2)},k)= R(h_{(1)},k_{(1)})R(g,h_{(2)}k_{(2)}),
\end{equation}
the \emph{left} $2$-cocycle condition. Similarly, $R^{-\ast}$ (and hence also $R^{\oop}$) are right $2$-cocycles.
\end{example}

\subsection{Induced 2-Cocycles over the Drinfeld Double}\label{drin2co}

From Theorem \ref{mainthm}, we first derive two morphisms of right $2$-cocycles for weakly dual bialgebras to right $2$-cocycles over their Drinfeld double. We then define, in the Hopf algebra case, more general $2$-cocycles over $\Drin_\Bbbk(C,B)$ from a pair of $2$-cocycles, one over $B$ and one over $C$.

\begin{corollary}\label{induced2cocycles} Let $C,B$ be weakly dual bialgebras (respectively, Hopf algebras) over $\Bbbk$. Then there exist two maps
\begin{align*}
\op{Ind}_B\colon &H^2(B,\Bbbk)\longrightarrow H^2(\Drin_{\Bbbk}(C,B),\Bbbk), & \sigma \longmapsto \op{Ind}_B\sigma,\\
\op{Ind}_C\colon &H^2(\leftexp{\cop}{C},\Bbbk)\longrightarrow H^2(\Drin_{\Bbbk}(C,B),\Bbbk), & \tau \longmapsto \op{Ind}_C\tau,
\end{align*}
where the $2$-cocycle $\op{Ind}_B\sigma$ is defined by requiring that
\begin{align}
\op{Ind}_B\sigma(b,b')&= \sigma(b,b'), & \op{Ind}_\Delta\sigma(c,c')&= \varepsilon(c)\varepsilon(c'), 
\end{align}
for any elements $b,b'\in B$, $c,c'\in C$. The $2$-cocycle $\op{Ind}_B\tau$ is defined by requiring that 
\begin{align}
\op{Ind}_C\tau(b,b')&=\varepsilon(b)\varepsilon(b'), & \op{Ind}_C\tau(c,c')&=\tau(c,c').
\end{align}

If $C,B$ are Hopf algebras, then $\Drin_{\Bbbk}(C,B)$ is defined on the $\Bbbk$-vector space $B\otimes C$, on which the $2$-cocycle is given by
\begin{align}\label{inducedcocycleB}
\op{Ind}_B\sigma(bc,b'c')&=\sigma(b,b'_{(2)})\ev(c,b'_{(1)})\varepsilon(c').
\end{align}
Similarly,
\begin{align}\label{inducedcocycleC}
\op{Ind}_C\tau(cb,c'b')&=\tau(c,c'_{(1)})\ev(c'_{(2)},S(b))\varepsilon(b').
\end{align}
\end{corollary}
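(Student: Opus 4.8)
The plan is to exhibit each induced cochain as the right $2$-cocycle attached to an explicit cleft object over $D:=\Drin_\Bbbk(C,B)$, so that the cocycle axioms (\ref{cocyclecond1})--(\ref{cocyclecond2}) and convolution invertibility are supplied for free by the cleft-object formalism recalled above, and only the closed formulas (\ref{inducedcocycleB})--(\ref{inducedcocycleC}) remain to be computed. For $\op{Ind}_B$, I would start from $\sigma\in C^2(B,\Bbbk)$ and form the twisted comodule algebra $B_\sigma$. This is a left $B$-cleft object whose cleaving map is the identity $\phi=\ide\colon B\to B_\sigma$, with convolution inverse $\psi:=\phi^{-\ast}$ existing because $\sigma$ is convolution invertible and $B$ is a Hopf algebra. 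Applying Corollary \ref{comodulealgebracor} with $H=\Bbbk$ and $A=B_\sigma$ then turns $B_\sigma\rtimes\leftexp{\cop}{C}$ into a left $D$-comodule algebra.

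The second step is to verify that $B_\sigma\rtimes\leftexp{\cop}{C}$ is a left $D$-cleft object. As a coalgebra $D$ is $B\otimes\leftexp{\cop}{C}$ with $\Delta_D(bc)=(b_{(1)}c_{(2)})\otimes(b_{(2)}c_{(1)})$, obtained from (\ref{drincop2})--(\ref{drincop3}) when $R$ is trivial. I would take the cleaving map $\Phi=\phi\rtimes\ide_C$ (the identity on the underlying space $B\otimes C$); comparing $\Delta_D$ with the coaction $\delta_{\Drin}$ of Corollary \ref{comodulealgebracor} shows that $\Phi$ is an isomorphism of left $D$-comodules onto the regular comodule, so that the $D$-coinvariants reduce to $\Bbbk$, and its convolution inverse $\Phi^{-\ast}$ is assembled from $\psi$ and $S_C$. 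The resulting cocycle $\op{Ind}_B\sigma(x\otimes y)=\Phi(x_{(1)})\Phi(y_{(1)})\Phi^{-\ast}(x_{(2)}y_{(2)})$ is then automatically a right $2$-cocycle on $D$. Evaluating it on $x=bc$ and $y=b'c'$ --- expanding $\Delta_D$, normal-ordering the internal product $b_{(2)}c_{(1)}b'_{(2)}c'_{(1)}$ in $D$ via the cross relation (\ref{crossrel}) with $R$ trivial, and using the twisted product of $B_\sigma$ --- should produce exactly (\ref{inducedcocycleB}); the factor $\ev(c,b'_{(1)})$ is precisely what the cross relation contributes when moving $C$ past $B$. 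Setting $c=c'=1$ and $b=b'=1$ recovers the two generator-wise equations, confirming that $\op{Ind}_B\sigma$ is the advertised map.

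For $\op{Ind}_C$ I would run the dual construction: twist $\leftexp{\cop}{C}$ by $\tau$ to obtain $(\leftexp{\cop}{C})_\tau$, regard it as a right $C$-comodule algebra (left $\leftexp{\cop}{C}$-comodules being the same as right $C$-comodules), and apply Corollary \ref{comodulealgebracor2} to make $(\leftexp{\cop}{C})_\tau\rtimes B$ a left $D$-comodule algebra and cleft object; the same extraction yields (\ref{inducedcocycleC}), where the antipode and the factor $\ev(c'_{(2)},S(b))$ enter through the induced $B$-action (\ref{inducedBaction}). Finally, both assignments descend to $H^2$: if $\sigma\sim\sigma'$ then $B_\sigma\cong B_{\sigma'}$ as $B$-comodule algebras, and since the crossed product and the comodule-algebra structure of Corollary \ref{comodulealgebracor} are functorial in $A$, the corresponding cleft objects over $D$ are isomorphic, whence $\op{Ind}_B\sigma\sim\op{Ind}_B\sigma'$; the same argument applies to $\op{Ind}_C$.

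The main obstacle I anticipate is the explicit identification in the second paragraph: matching the abstract cleft cocycle with (\ref{inducedcocycleB}) requires an explicit handle on $\Phi^{-\ast}$ together with careful Sweedler bookkeeping when normal-ordering $c_{(1)}b'_{(2)}$ through the cross relation and simplifying via the counit and antipode axioms. A secondary point to watch is that the values genuinely land in the one-dimensional coinvariant space, which rests on the identification of $\delta_{\Drin}$ with $\Delta_D$ under $\Phi$; for the bialgebra (non-Hopf) case, where the cleft-object machinery need not apply, the generator formulas together with the descent statement should instead be checked directly against (\ref{cocyclecond1})--(\ref{cocyclecond2}).
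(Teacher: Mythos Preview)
Your approach is correct and matches the paper's own reasoning closely: the paper's proof of this corollary is a one-line deferral to Theorem~\ref{mainthm} via Corollary~\ref{comodulealgebracor} together with a reconstruction statement, and immediately afterwards the paper restates the result exactly as you do, as Corollary~\ref{cleftthm} (a $B$-cleft object $A$ gives a $\Drin_\Bbbk(C,B)$-cleft object $A\rtimes\leftexp{\cop}{C}$, and dually for $C$). Your plan of taking $A=B_\sigma$, applying Corollary~\ref{comodulealgebracor}, checking that $\Phi=\ide\rtimes\ide_C$ is a comodule isomorphism onto the regular comodule with convolution inverse built from $\psi$ and $S_C$, and then reading off the cocycle, is precisely this route made explicit.

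The only difference worth noting is that the paper also supplies a second, more computational route: in Theorem~\ref{2cocyclethm} it writes down the general cocycle $\sigma\circ\tau$ and verifies condition~(\ref{cocyclecond1}) by a direct Sweedler-notation calculation, then recovers $\op{Ind}_B\sigma=\sigma\circ\triv_C$ and $\op{Ind}_C\tau=\triv_B\circ\tau$ as special cases. That route avoids the bookkeeping you flag around $\Phi^{-\ast}$ and the normal-ordering through the cross relation, at the cost of a longer elementwise check; your cleft-object route is more conceptual and makes the descent to $H^2$ transparent via functoriality, exactly as you argue. Both are valid and the paper in effect endorses both.
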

\begin{proof}
This follows from Theorem \ref{mainthm}, via Corollary \ref{comodulealgebracor}, using \cite{Lau}*{Proposition 3.8.4}. We will provide detailed proofs of more general statements in Theorem \ref{2cocyclethm}, and Section \ref{cleftbraided}.
\end{proof}

Note that, in the Hopf algebra case, Equation (\ref{inducedcocycleC}) is equivalent to
\begin{align}\label{inducedcocycleC2}
\op{Ind}_C\tau(bc,b'c')&=\varepsilon(b)\tau(c_{(1)},c')\ev(c_{(2)},b').
\end{align}
We include a presentation of these induced $2$-cocycles using graphical calculus.
The cocycles of Equation (\ref{inducedcocycleB}) and Equation (\ref{inducedcocycleC2}) are given by
\begin{align*}
\op{Ind}_B\sigma&=\vcenter{\hbox{
\begingroup%
  \makeatletter%
  \providecommand\color[2][]{%
    \errmessage{(Inkscape) Color is used for the text in Inkscape, but the package 'color.sty' is not loaded}%
    \renewcommand\color[2][]{}%
  }%
  \providecommand\transparent[1]{%
    \errmessage{(Inkscape) Transparency is used (non-zero) for the text in Inkscape, but the package 'transparent.sty' is not loaded}%
    \renewcommand\transparent[1]{}%
  }%
  \providecommand\rotatebox[2]{#2}%
  \ifx\svgwidth\undefined%
    \setlength{\unitlength}{64.37651367bp}%
    \ifx\svgscale\undefined%
      \relax%
    \else%
      \setlength{\unitlength}{\unitlength * \real{\svgscale}}%
    \fi%
  \else%
    \setlength{\unitlength}{\svgwidth}%
  \fi%
  \global\let\svgwidth\undefined%
  \global\let\svgscale\undefined%
  \makeatother%
  \begin{picture}(1,0.67200066)%
    \put(0,0){\includegraphics[width=\unitlength]{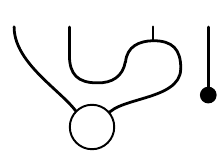}}%
    \put(-0.0026577,0.61057499){\color[rgb]{0,0,0}\makebox(0,0)[lb]{\smash{$B$}}}%
    \put(0.24735955,0.61371319){\color[rgb]{0,0,0}\makebox(0,0)[lb]{\smash{$C$}}}%
    \put(0.62076552,0.61334875){\color[rgb]{0,0,0}\makebox(0,0)[lb]{\smash{$B$}}}%
    \put(0.34961769,0.0492047){\color[rgb]{0,0,0}\makebox(0,0)[lb]{\smash{$\sigma$}}}%
    \put(0.86664149,0.61371319){\color[rgb]{0,0,0}\makebox(0,0)[lb]{\smash{$C$}}}%
  \end{picture}%
\endgroup%
}},& \op{Ind}_C\tau&=\vcenter{\hbox{
\begingroup%
  \makeatletter%
  \providecommand\color[2][]{%
    \errmessage{(Inkscape) Color is used for the text in Inkscape, but the package 'color.sty' is not loaded}%
    \renewcommand\color[2][]{}%
  }%
  \providecommand\transparent[1]{%
    \errmessage{(Inkscape) Transparency is used (non-zero) for the text in Inkscape, but the package 'transparent.sty' is not loaded}%
    \renewcommand\transparent[1]{}%
  }%
  \providecommand\rotatebox[2]{#2}%
  \ifx\svgwidth\undefined%
    \setlength{\unitlength}{64.37651367bp}%
    \ifx\svgscale\undefined%
      \relax%
    \else%
      \setlength{\unitlength}{\unitlength * \real{\svgscale}}%
    \fi%
  \else%
    \setlength{\unitlength}{\svgwidth}%
  \fi%
  \global\let\svgwidth\undefined%
  \global\let\svgscale\undefined%
  \makeatother%
  \begin{picture}(1,0.67200066)%
    \put(0,0){\includegraphics[width=\unitlength]{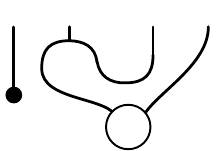}}%
    \put(-0.0026577,0.61057499){\color[rgb]{0,0,0}\makebox(0,0)[lb]{\smash{$B$}}}%
    \put(0.24735955,0.61371319){\color[rgb]{0,0,0}\makebox(0,0)[lb]{\smash{$C$}}}%
    \put(0.62076552,0.61334875){\color[rgb]{0,0,0}\makebox(0,0)[lb]{\smash{$B$}}}%
    \put(0.51166983,0.0492047){\color[rgb]{0,0,0}\makebox(0,0)[lb]{\smash{$\tau$}}}%
    \put(0.86664149,0.61371319){\color[rgb]{0,0,0}\makebox(0,0)[lb]{\smash{$C$}}}%
  \end{picture}%
\endgroup%
}}.
\end{align*}

Another way to present these results of this section is the following statement:

\begin{corollary}\label{cleftthm}$~$\begin{enumerate}
\item[(i)]
Let $A$ be a left $B$-cleft object. Then $A\rtimes \leftexp{\cop}{C}$ is a left $\Drin_{\Bbbk}(C,B)$-cleft object.
\item[(ii)]
Let $A'$ be a right $C$-cleft object. Then $A'\rtimes B$ is a left $\Drin_{\Bbbk}(C,B)$-cleft object.
\end{enumerate}
\end{corollary}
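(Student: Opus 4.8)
The plan is to deduce both parts directly from the comodule algebra structures of Section~\ref{comodulealgebras} by exhibiting an explicit cleaving map. For part~(i), Corollary~\ref{comodulealgebracor} applied with $H=\Bbbk$ (so that the universal $R$-matrix is trivial) already shows that $A\rtimes\leftexp{\cop}{C}$ is a left $\Drin_{\Bbbk}(C,B)$-comodule algebra, with coaction $\delta_{\Drin}(a)=a^{(-1)}\otimes a^{(0)}$ and $\delta_{\Drin}(c)=c_{(2)}\otimes c_{(1)}$ for $a\in A$, $c\in C$. Since a left $B$-cleft object comes equipped with its cleaving datum $\phi\colon B\to A$, it remains only to upgrade $\phi$ to a unit-preserving, convolution-invertible isomorphism of left $\Drin_{\Bbbk}(C,B)$-comodules from $\Drin_{\Bbbk}(C,B)$ onto $A\rtimes\leftexp{\cop}{C}$.

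Concretely, I would define $\Phi\colon\Drin_{\Bbbk}(C,B)\to A\rtimes\leftexp{\cop}{C}$ on the triangular decomposition $\Drin_{\Bbbk}(C,B)=C\otimes B$ by $\Phi(cb)=\phi(b)c$, i.e. as $\phi$ on the $B$-generators and the identity on the $C$-generators. As $\phi$ is a unit-preserving isomorphism of $B$-comodules and $\ide_C$ is trivially one, $\Phi$ is bijective and unital, and its restrictions to the subcoalgebras $B$ and $C$ are immediately comodule maps, using that $\phi$ intertwines the regular $B$-coaction $\Delta_B$ with the coaction $\delta_A$ of $A$. Recalling that as a coalgebra $\Drin_{\Bbbk}(C,B)\cong\leftexp{\cop}{C}\otimes B$ carries the tensor product coproduct $\Delta_{\Drin}(cb)=c_{(2)}b_{(1)}\otimes c_{(1)}b_{(2)}$, the convolution inverse $\Psi$ of $\Phi$ is assembled from $\psi=\phi^{-\ast}$ and the antipode $S^{-1}_C$ of $\leftexp{\cop}{C}$; once $\Phi$ is known to be a comodule isomorphism, this last condition is the routine part of the verification.

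Part~(ii) is proved the same way: by Corollary~\ref{comodulealgebracor2} with $H=\Bbbk$, $A'\rtimes B$ is a left $\Drin_{\Bbbk}(C,B)$-comodule algebra, where the left $B$-action on $A'$ is the one induced from the right $C$-coaction through Equation~(\ref{inducedBaction}); one then takes the cleaving map $\Phi'(cb)=\phi'(c)b$ built from the cleaving datum $\phi'\colon C\to A'$ of the right $C$-cleft object and $\ide_B$. Extracting the associated $2$-cocycle from each of these cleft objects recovers exactly $\op{Ind}_B\sigma$ and $\op{Ind}_C\tau$ of Corollary~\ref{induced2cocycles}, as in Equations~(\ref{inducedcocycleB}) and~(\ref{inducedcocycleC}). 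Equivalently, since every left $B$-cleft object is isomorphic to $B_\sigma$ for a right $2$-cocycle $\sigma$, the statement may be read as the assertion that $B_\sigma\rtimes\leftexp{\cop}{C}$ is the one-sided cocycle twist $\Drin_{\Bbbk}(C,B)_{\op{Ind}_B\sigma}$, and any such twist is a cleft object with identity cleaving map.

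The main obstacle is showing that $\Phi$ is a morphism of $\Drin_{\Bbbk}(C,B)$-comodules on products. Checking $\delta_{\Drin}\Phi=(\ide\otimes\Phi)\Delta_{\Drin}$ on an element $cb$ reduces, after inserting the coaction formulas above and the comodule-map property of $\phi$, to reconciling the two orderings $b_{(1)}c_{(2)}$ and $c_{(2)}b_{(1)}$ inside $\Drin_{\Bbbk}(C,B)$; this is exactly where the Drinfeld double cross relation~(\ref{braideddrineq1}) (equivalently~(\ref{crossrel})) and coassociativity must be fed in, and it is the computational heart of the argument. This is precisely the calculation carried out in full generality — for arbitrary quasitriangular $H$ and a genuine pair of cocycles — in Theorem~\ref{2cocyclethm} and Section~\ref{cleftbraided}, of which the present corollary is the specialization to $H=\Bbbk$ with one of the two cocycles trivial.
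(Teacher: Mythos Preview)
Your proposal is correct and lands on the same argument the paper uses. In the paper this corollary is not given an independent proof: it is introduced as ``another way to present'' Corollary~\ref{induced2cocycles}, whose content is that a right $2$-cocycle $\sigma$ on $B$ induces $\op{Ind}_B\sigma$ on $\Drin_\Bbbk(C,B)$, with the detailed verification deferred to Theorem~\ref{2cocyclethm} (and its braided upgrade in Section~\ref{cleftbraided}). Your ``equivalently'' paragraph, identifying $A\cong B_\sigma$ and then $B_\sigma\rtimes\leftexp{\cop}{C}\cong\Drin_\Bbbk(C,B)_{\op{Ind}_B\sigma}$, is exactly this route and is the way the paper intends the corollary to be read.

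The additional direct cleaving-map viewpoint you sketch, with $\Phi(cb)=\phi(b)c$, is a legitimate alternative presentation but is not what the paper does. One caution: the step you flag as the ``main obstacle'' --- reconciling $b_{(1)}c_{(2)}$ with $c_{(2)}b_{(1)}$ in $\Drin_\Bbbk(C,B)$ --- does not resolve by a single application of the cross relation~(\ref{braideddrineq1}); the cleanest way to close it is precisely to pass through the twisted-product description $m_{\sigma\circ\triv}$ of Equation~(\ref{doubletwistproduct}), which identifies $B_\sigma\rtimes\leftexp{\cop}{C}$ with $\Drin_\Bbbk(C,B)_{\op{Ind}_B\sigma}$ as comodule algebras and hence makes the identity map the cleaving map. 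So in practice the direct approach collapses back onto the cocycle argument rather than providing a genuinely independent verification. A small reference slip: Theorem~\ref{2cocyclethm} is already the $H=\Bbbk$ case; the ``arbitrary quasitriangular $H$'' version you allude to is Theorem~\ref{general2cocycles}.
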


$2$-Cocycles of the Drinfeld double have been studied in the literature. A special case of the general construction in \cite{Sch3} describes the kernel of the map 
\[
H^2(Q\bowtie K^*)\longrightarrow H^2(Q)\times H^2( K^*)
\]
where $K$, $Q$ are Hopf algebras, $K$ finite-dimensional \cite{Sch3}*{Theorem 6.5.6}. Here, $Q\bowtie K^*$ is a bicross product, which as a special case recovers the Drinfeld double \cite{Maj9}. Moreover, \cite{Sch2}*{Theorem 2.1} classifies all Galois objects  over a twisted bicross product $H\bowtie_\tau L$ in terms of Galois objects of $H$ and $L$ together with a Hopf pairing of $H$ and $L$ (called a skew-pairing therein). Over a field $\Bbbk$, this result gives a characterization of a cleft objects over bicross products (including the Drinfeld double).

Later work in \cite{BC}*{Corollary 4.11} provides a description of the group of all lazy $2$-cocycles on $\Drin_{\Bbbk}(A^*,A)$ in terms of data for $A$ and $A^*$ using the approach that the Drinfeld double is a twist of the tensor product Hopf algebra. We further note that \cite{CP}*{Eq. (3.1)} gives a different way to induce \emph{lazy} $2$-cocycles on the Drinfeld double. 

Motivated by the above form of induced $2$-cocycles, a more general statement arises:

\begin{theorem}\label{2cocyclethm}
For $C,B$ weakly dual Hopf algebras over $\Bbbk$, let $\sigma\in C^2(B,\Bbbk)$ and $\tau\in C^2(\leftexp{\cop}{C},\Bbbk)$ (or, equivalently, $\tau$ is a \emph{left} $2$-cocycle over $C$). Then $\sigma \circ \tau\in C^2(\Drin_\Bbbk(C,B), \Bbbk)$, where
\begin{align}
(\sigma \circ \tau)(bc,b'c')&=\sigma(b,b'_{(2)})\tau(c_{(1)},c')\ev(c_{(2)},b'_{(1)}).
\end{align} 
Right twist by $\sigma \circ \tau$ gives an algebra defined on the $\Bbbk$-vector space $B\otimes C$ with the product
\begin{align}\label{doubletwistproduct}
m_{\sigma \circ \tau}(bc,b'c')&=m_\sigma(b,b'_{(2)})\otimes m_\tau(c_{(1)},c')\ev(c_{(2)},b'_{(1)}).
\end{align}
\end{theorem}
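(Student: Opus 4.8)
The plan is to verify that $\omega := \sigma\circ\tau$ is a right $2$-cocycle on $D := \Drin_\Bbbk(C,B)$ directly from the axioms (\ref{cocyclecond1})--(\ref{cocyclecond2}), and then to read off the twisted product from the general formula $g\cdot_\omega h = g_{(1)}h_{(1)}\,\omega(g_{(2)},h_{(2)})$. Throughout I use that for $H=\Bbbk$ (so $R=1\otimes 1$) the coproduct formulas (\ref{drincop2})--(\ref{drincop3}) specialise to $\Delta(b)=b_{(1)}\otimes b_{(2)}$ and $\Delta(c)=c_{(2)}\otimes c_{(1)}$, whence $\Delta_D(bc)=b_{(1)}c_{(2)}\otimes b_{(2)}c_{(1)}$ in the normal form $D=B\otimes C$. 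The normalisation axiom (\ref{cocyclecond2}) is then immediate: evaluating $\omega$ with one argument equal to $1=1_B1_C$ and using $\sigma(b,1)=\varepsilon(b)$, $\tau(c,1)=\varepsilon(c)$ and $\ev(c,1)=\varepsilon(c)$ (Definition \ref{catdualdef}) collapses everything to $\varepsilon$. Convolution invertibility is handled separately, by exhibiting a convolution inverse built from $\sigma^{-\ast}$, $\tau^{-\ast}$ and the convolution inverse $\ev(S\otimes\ide)$ of $\ev$; more cleanly, it follows once $\omega$ is identified with the cocycle of a cleft object, since such cocycles are automatically convolution-invertible.

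The heart of the matter is the cocycle identity (\ref{cocyclecond1}). I would expand both sides on elements $bc,\,b'c',\,b''c''$ of $D$, using $\Delta_D(bc)=b_{(1)}c_{(2)}\otimes b_{(2)}c_{(1)}$ and straightening every product back into normal form by the cross relation of Definition \ref{braideddrin1}, which for $H=\Bbbk$ reads $b_{(2)}c_{(1)}\ev(c_{(2)},b_{(1)})=c_{(2)}b_{(1)}\ev(c_{(1)},b_{(2)})$. The inputs are then the cocycle identities for $\sigma$ on $B$ and for $\tau$ on $\leftexp{\cop}{C}$, together with the bialgebra-pairing axioms $\ev(cc',b)=\ev(c',b_{(1)})\ev(c,b_{(2)})$ and $\ev(c,bb')=\ev(c_{(1)},b')\ev(c_{(2)},b)$ and the antipode compatibility of Definition \ref{catdualdef}; these are used repeatedly to re-bracket the mixed factors $\ev(c_{(2)},b'_{(1)})$ so that the $\sigma$-arguments (all in $B$) and the $\tau$-arguments (all in $C$) assemble into the shapes demanded by the two cocycle conditions.

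The main obstacle is precisely the bookkeeping of these cross terms: because $\Delta_D$ entangles $B$ and $C$, and because straightening products introduces further pairing factors, the $\sigma$-part and the $\tau$-part of the identity do not separate, and one must show that the residual $\ev$-contributions on the two sides agree \emph{after} the cocycle conditions for $\sigma$ and $\tau$ have been applied. To keep this under control I would argue structurally rather than term-by-term: identify the right-twisted product $m_\omega$ with the product of the crossed product $B_\sigma\rtimes(\leftexp{\cop}{C})_\tau$ assembled along $\ev$, where $B_\sigma$ and $(\leftexp{\cop}{C})_\tau$ are the cleft objects attached to $\sigma$ and $\tau$. Associativity of $m_\omega$ --- equivalently, that $B_\sigma\rtimes(\leftexp{\cop}{C})_\tau$ is a left $D$-cleft object, combining the two halves of Corollary \ref{cleftthm} --- is then inherited from associativity of the two cleft objects together with the comodule-algebra compatibilities of Corollary \ref{comodulealgebracor}, and by the standard correspondence between associative one-sided twists and right $2$-cocycles (cf. \cite{DT}) this is exactly the cocycle identity for $\omega$.

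Finally, to obtain the stated product formula I substitute $\Delta_D(bc)=b_{(1)}c_{(2)}\otimes b_{(2)}c_{(1)}$ into $g\cdot_\omega h=g_{(1)}h_{(1)}\,\omega(g_{(2)},h_{(2)})$ and simplify the resulting untwisted $D$-product with the pairing axioms; this reduces $m_\omega(bc,b'c')$ to $(b\cdot_\sigma b'_{(2)})\otimes(c_{(1)}\cdot_\tau c')\,\ev(c_{(2)},b'_{(1)})$, which is the asserted $m_{\sigma\circ\tau}$. Specialising $\tau=\triv_C$ (resp. $\sigma=\triv_B$) recovers $\op{Ind}_B\sigma$ of (\ref{inducedcocycleB}) (resp. $\op{Ind}_C\tau$ of (\ref{inducedcocycleC2})), which is a useful consistency check against Corollary \ref{induced2cocycles}.
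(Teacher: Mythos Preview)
Your approach is correct but takes a different route from the paper. The paper proves Theorem~\ref{2cocyclethm} by a direct term-by-term verification of the cocycle identity~(\ref{cocyclecond1}): it expands both sides on $bc,\,b'c',\,b''c''$, uses the antipode trick $\ev(c'_{(3)},b''_{(4)})\ev(c'_{(2)},S(b''_{(5)}))=\varepsilon(c'_{(2)})\varepsilon(b''_{(4)})$ to cancel the cross terms coming from the $D$-coproduct, then applies the cocycle conditions for $\sigma$ and $\tau$ and the pairing axioms in sequence to close the chain of equalities. Your proposal instead identifies $m_\omega$ with the product on the crossed object $B_\sigma\rtimes(\leftexp{\cop}{C})_\tau$ assembled along $\ev$, and deduces the cocycle identity from associativity of that product via the cleft-object correspondence. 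This structural strategy is in fact the one the paper adopts for the \emph{braided} generalisation in Theorem~\ref{general2cocycles}, where associativity of $m_{\sigma\circ\tau}$ is verified by graphical calculus using only that $B_\sigma$ is a left $B$-comodule algebra and $C_\tau$ a right $C$-comodule algebra.

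One caution on your citations: Corollaries~\ref{comodulealgebracor} and~\ref{cleftthm} each supply a $D$-cleft object from a twist on \emph{one} side only (either $\sigma$ or $\tau$ trivial), so ``combining the two halves'' is not literally an invocation of those statements. You still owe a short argument that the doubly-twisted crossed product $B_\sigma\rtimes(\leftexp{\cop}{C})_\tau$ is associative and carries the correct $D$-coaction; this is exactly the content of Figure~\ref{Fig4} in the paper's proof of Theorem~\ref{general2cocycles}, and it goes through for the same reasons you indicate (comodule-algebra compatibilities plus the pairing axioms). What each approach buys: the paper's direct computation is self-contained and makes the role of the antipode explicit, while your structural route explains \emph{why} the formula works and scales cleanly to the braided case without redoing the bookkeeping.
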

\begin{proof}
Condition (\ref{cocyclecond2}) easily follows from the corresponding condition for $\sigma$ and $\tau$. We verify the $2$-cocycle condition (\ref{cocyclecond1}) explicitly.
\begin{align*}
&(\sigma \circ \tau)(bc,(b'c')_{(1)}(b''c'')_{(1)})(\sigma\circ \tau)((b'c')_{(2)},(b''c'')_{(2)})\\&=
\sigma(b,b'_{(2)}b''_{(3)})\tau(c_{(1)},c'_{(4)}c''_{(2)})\sigma(b'_{(3)},b''_{(6)})\tau(c'_{(1)}, c''_{(1)})\\&~\phantom{=}~\ev(c_{(2)},b'_{(1)}b''_{(2)})\ev(c'_{(3)},S(b''_{(4)}))\ev(c'_{(5)},b''_{(1)})\ev(c'_{(2)},b''_{(5)})\\
&=
\sigma(b,b'_{(2)}b''_{(3)})\tau(c_{(1)},c'_{(2)}c''_{(2)})\sigma(b'_{(3)},b''_{(4)})\tau(c'_{(1)}, c''_{(1)})\ev(c_{(2)},b'_{(1)}b''_{(2)})\ev(c'_{(3)},b''_{(1)})\\
&=
\sigma(b_{(1)}b'_{(2)},b''_{(3)})\tau(c_{(1)},c'_{(2)}c''_{(2)})\sigma(b_{(2)},b'_{(3)})\tau(c'_{(1)}, c''_{(1)})\ev(c_{(2)},b'_{(1)}b''_{(2)})\ev(c'_{(3)},b''_{(1)})
\\
&=
\sigma(b_{(1)}b'_{(2)},b''_{(3)})\tau(c_{(1)},c'_{(1)})\sigma(b_{(2)},b'_{(3)})\tau(c_{(2)}c'_{(2)}, c'')\ev(c_{(3)},b'_{(1)}b''_{(2)})\ev(c'_{(3)},b''_{(1)})\\
&=\sigma(b_{(1)}b'_{(2)},b''_{(3)})\tau(c_{(1)},c'_{(1)})\sigma(b_{(2)},b'_{(3)})\tau(c_{(2)}c'_{(2)}, c'')\ev(c_{(4)},b'_{(1)})\ev(c_{(3)},b''_{(2)})\ev(c'_{(3)},b''_{(1)})\\
&=\sigma(b_{(1)}b'_{(2)},b''_{(2)})\tau(c_{(1)},c'_{(1)})\sigma(b_{(2)},b'_{(3)})\tau(c_{(2)}c'_{(2)}, c'')\ev(c_{(4)},b'_{(1)})\ev(c'_{(3)}c_{(3)},b''_{(1)})\\
&=\sigma(b_{(1)}b'_{(2)},b''_{(2)})\tau(c_{(1)},c'_{(1)})\sigma(b_{(2)},b'_{(3)})\tau((c_{(2)}c'_{(2)})_{(1)}, c'')\ev(c_{(3)},b'_{(1)})\ev((c_{(2)}c'_{(2)})_{(2)},b''_{(1)})\\
&=\sigma(b_{(1)}b'_{(2)},b''_{(2)})\tau(c_{(1)},c'_{(1)})\sigma(b_{(2)},b'_{(5)})\tau((c_{(4)}c'_{(2)})_{(1)}, c'')\\&~\phantom{=}~\ev(c_{(3)},b'_{(1)})\ev((c_{(4)}c'_{(2)})_{(2)},b''_{(1)})\ev(c'_{(3)},b''_{(3)})\ev(c'_{(2)},S(b''_{(4)}))\\
&=(\sigma \circ \tau)((bc)_{(1)}(b'c')_{(1)},b''c'')(\sigma\circ \tau)((bc)_{(2)},(b'c')_{(2)}).
\end{align*}
Here, we first use the definition of $\sigma\circ \tau$ and the bialgebra condition applied to $b'_{(1)}\otimes b''_{(2)}$, followed by application of the identity $\ev(c'_{(3)},b''_{(4)})\ev(c'_{(2)},S(b''_{(5)}))=\varepsilon(c'_{(2)})\otimes \varepsilon(b'_{(4)})$ together with the counit axioms. In the third equality, it is used that $\sigma$ is a $2$-cocycle over $B$, followed by $\tau$ being a $2$-cocycle over $\leftexp{\cop}{C}$ in the fourth equality. Next, we use $\ev(c_{(3)},b'_{(1)}b''_{(2)})=\ev(c_{(4)},b'_{(1)})\ev(c_{(3)},b''_{(2)})$, followed by application of $\ev(c'_{(4)},b''_{(1)})\ev(c_{(3)},b''_{(2)})=\ev(c'_{(3)}c_{(4)},b''_{(1)})$ in the fifth equality, and the bialgebra condition applied to $c_{(2)}\otimes c'_{(2)}$ in the sixth equality. Finally, the equality $\ev(c'_{(3)},b''_{(3)})\ev(c'_{(2)},S(b''_{(4)}))=\varepsilon(c'_{(2)})\otimes \varepsilon(b'_{(3)})$ together with the counit axioms is used again in the penultimate equality, followed by the definitions.

Finally, the simplified formula for the product in Equation (\ref{doubletwistproduct}) follows under use of the antipode axiom.
\end{proof}

The cocycle $\sigma\circ\tau$ can be depicted using graphical calculus as
\begin{align}
\sigma\circ \tau =\vcenter{\hbox{
\begingroup%
  \makeatletter%
  \providecommand\color[2][]{%
    \errmessage{(Inkscape) Color is used for the text in Inkscape, but the package 'color.sty' is not loaded}%
    \renewcommand\color[2][]{}%
  }%
  \providecommand\transparent[1]{%
    \errmessage{(Inkscape) Transparency is used (non-zero) for the text in Inkscape, but the package 'transparent.sty' is not loaded}%
    \renewcommand\transparent[1]{}%
  }%
  \providecommand\rotatebox[2]{#2}%
  \ifx\svgwidth\undefined%
    \setlength{\unitlength}{75.99584198bp}%
    \ifx\svgscale\undefined%
      \relax%
    \else%
      \setlength{\unitlength}{\unitlength * \real{\svgscale}}%
    \fi%
  \else%
    \setlength{\unitlength}{\svgwidth}%
  \fi%
  \global\let\svgwidth\undefined%
  \global\let\svgscale\undefined%
  \makeatother%
  \begin{picture}(1,0.69888911)%
    \put(-0.00491521,0.65140829){\color[rgb]{0,0,0}\makebox(0,0)[lb]{\smash{$B$}}}%
    \put(0.19363891,0.65390053){\color[rgb]{0,0,0}\makebox(0,0)[lb]{\smash{$C$}}}%
    \put(0.5888732,0.65361109){\color[rgb]{0,0,0}\makebox(0,0)[lb]{\smash{$B$}}}%
    \put(0.78413846,0.65390053){\color[rgb]{0,0,0}\makebox(0,0)[lb]{\smash{$C$}}}%
    \put(0,0){\includegraphics[width=\unitlength,page=1]{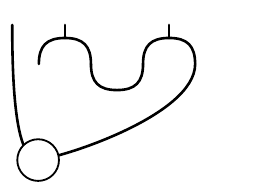}}%
    \put(0.09516855,0.05545472){\color[rgb]{0,0,0}\makebox(0,0)[lb]{\smash{$\sigma$}}}%
    \put(0,0){\includegraphics[width=\unitlength,page=2]{cocycle3.pdf}}%
    \put(0.68730621,0.04558576){\color[rgb]{0,0,0}\makebox(0,0)[lb]{\smash{$\tau$}}}%
  \end{picture}%
\endgroup%
}}.
\end{align}

Note that the more general statement of Theorem \ref{2cocyclethm} recovers $\op{Ind}_B\sigma$ as $\sigma\circ \triv_C$, where $\triv_C=\varepsilon_C\otimes \varepsilon_C$, and $\op{Ind}_C\tau$ as $\triv_B\circ \tau$. We remark that the $2$-cycles of the form $\sigma\circ \tau$ do in general not give \emph{all} $2$-cocycles over $\Drin_\Bbbk(C,B)$. For example, the trivial $2$-cocycle is not of this form.

\begin{example}
Let $G$ be a finite group. We can identify $H^2(\Bbbk G,\Bbbk)$ we the 2nd cohomology group $H^2(G, \Bbbk^\times)$ of $G$ (cf. \cite{Maj4}*{Section~6}). Moreover, $H^2(\Bbbk[G],\Bbbk)$ can be identified with the second homology $H_2(\Bbbk[G],\Bbbk)$ of $\Bbbk G$, the set of left $\Bbbk$-cycles up to boundaries. That is, elements $c=c^{(1)}\otimes c^{(2)}\in \Bbbk G\otimes\Bbbk G$ satisfying 
\begin{equation}\label{cycleq}
c_1^{(1)}{c_2^{(1)}}_{(1)}\otimes c_1^{(2)}{c_2^{(1)}}_{(2)}\otimes c_2^{(2)}=
c_1^{(1)}\otimes c_2^{(1)}{c_1^{(2)}}_{(1)}\otimes c_2^{(2)}{c_1^{(2)}}_{(2)}.
\end{equation}
Explicitly, $c=\sum_{g,h\in G}c_{g,h}g\otimes h$ and then Equation (\ref{cycleq}) becomes
\begin{align}
c_{g,h}c_{k,l}=c_{gk^{-1},hk^{-1}}c_{kh,lh}, &&\forall g,h,k,l\in G.
\end{align}
Theorem \ref{2cocyclethm} now gives a morphism
\begin{align*}
H^2(\Bbbk G, \Bbbk)\times H_2(\Bbbk G, \Bbbk)\longrightarrow H^2(\Drin_\Bbbk(G), \Bbbk),
\end{align*}
which maps $(\sigma,c)$ to the right $2$-cocycle $\sigma\circ c^*$ given by
\begin{align}
(\sigma\circ c^*)(g\delta_h\otimes k\delta_l)=\sigma(g,k)c_{l,k^{-1}h}.
\end{align}
\end{example}

\begin{example}\label{Luexample}
Let $H$ be a finite-dimensional Hopf algebra over $\Bbbk$. It was shown in \cite{Lu} that $\Heis_\Bbbk(B^*,B)$ is a $2$-cocycle twist of $\Drin_\Bbbk(B^*,B)$. The $2$-cocycle used is $\triv \circ \triv$, which coincides with the dual of the opposite universal R-matrix $R^{\oop}$.
\end{example}

\subsection{2-Cocycles in a Braided Monoidal Category}\label{braidedcycles}

We now need some facts about a generalization of $2$-cocycles to working in a braided monoidal category $\cB$. 

\begin{definition}\label{2cocycledef}
Let $B$ be a bialgebra in $\cB$. A \emph{right $2$-cocycle over $B$ in $\cB$} is a convolution-invertible map $\sigma \colon B\otimes B\to I$ in $\cB$, such that
\begin{align}\label{2cocycleinB}
&\sigma(\ide_B\otimes m\otimes \sigma)(\ide_B\otimes\Delta_{B\otimes B})
= \sigma(m\otimes \sigma\otimes \ide_B)(\Delta_{B\otimes B}\otimes\ide_B),
\end{align}
as well as
\begin{align}\label{2cocyclenorm}
\sigma(1\otimes \ide_B)&=\varepsilon, &\sigma(\ide_B\otimes 1)&=\varepsilon.
\end{align}
The set of all $2$-cocycles over $B$ in $\cB$ is denoted by $C^2_{\cB}(B,I)$. If $\cB=\lmod{H}$ for a quasitriangular Hopf algebra, we also use the notation $C^2_{H}(B,\Bbbk)$.
\end{definition}

The $2$-cocycle condition Equations (\ref{2cocycleinB}) and (\ref{2cocyclenorm}) can be visualized as
\begin{align*}
\vcenter{\hbox{
\begingroup%
  \makeatletter%
  \providecommand\color[2][]{%
    \errmessage{(Inkscape) Color is used for the text in Inkscape, but the package 'color.sty' is not loaded}%
    \renewcommand\color[2][]{}%
  }%
  \providecommand\transparent[1]{%
    \errmessage{(Inkscape) Transparency is used (non-zero) for the text in Inkscape, but the package 'transparent.sty' is not loaded}%
    \renewcommand\transparent[1]{}%
  }%
  \providecommand\rotatebox[2]{#2}%
  \ifx\svgwidth\undefined%
    \setlength{\unitlength}{121.42095279bp}%
    \ifx\svgscale\undefined%
      \relax%
    \else%
      \setlength{\unitlength}{\unitlength * \real{\svgscale}}%
    \fi%
  \else%
    \setlength{\unitlength}{\svgwidth}%
  \fi%
  \global\let\svgwidth\undefined%
  \global\let\svgscale\undefined%
  \makeatother%
  \begin{picture}(1,0.44485246)%
    \put(0,0){\includegraphics[width=\unitlength,page=1]{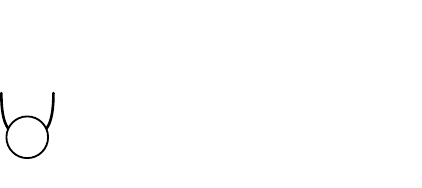}}%
    \put(-0.00926498,0.07691323){\color[rgb]{0,0,0}\makebox(0,0)[lb]{\smash{}}}%
    \put(0.04014988,0.09544381){\color[rgb]{0,0,0}\makebox(0,0)[lb]{\smash{$\sigma$}}}%
    \put(0,0){\includegraphics[width=\unitlength,page=2]{2cocycle.pdf}}%
    \put(0.28722435,0.15745914){\color[rgb]{0,0,0}\makebox(0,0)[lb]{\smash{$\sigma$}}}%
    \put(0,0){\includegraphics[width=\unitlength,page=3]{2cocycle.pdf}}%
    \put(0.76901929,0.15745914){\color[rgb]{0,0,0}\makebox(0,0)[lb]{\smash{$\sigma$}}}%
    \put(0,0){\includegraphics[width=\unitlength,page=4]{2cocycle.pdf}}%
    \put(0.77519614,0.02774512){\color[rgb]{0,0,0}\makebox(0,0)[lb]{\smash{$\sigma$}}}%
    \put(0,0){\includegraphics[width=\unitlength,page=5]{2cocycle.pdf}}%
    \put(0.41980616,0.28802953){\color[rgb]{0,0,0}\makebox(0,0)[lb]{\smash{$=$}}}%
    \put(0,0){\includegraphics[width=\unitlength,page=6]{2cocycle.pdf}}%
  \end{picture}%
\endgroup%
}}, &&\text{and }&&
 \vcenter{\hbox{
\begingroup%
  \makeatletter%
  \providecommand\color[2][]{%
    \errmessage{(Inkscape) Color is used for the text in Inkscape, but the package 'color.sty' is not loaded}%
    \renewcommand\color[2][]{}%
  }%
  \providecommand\transparent[1]{%
    \errmessage{(Inkscape) Transparency is used (non-zero) for the text in Inkscape, but the package 'transparent.sty' is not loaded}%
    \renewcommand\transparent[1]{}%
  }%
  \providecommand\rotatebox[2]{#2}%
  \ifx\svgwidth\undefined%
    \setlength{\unitlength}{100.1250375bp}%
    \ifx\svgscale\undefined%
      \relax%
    \else%
      \setlength{\unitlength}{\unitlength * \real{\svgscale}}%
    \fi%
  \else%
    \setlength{\unitlength}{\svgwidth}%
  \fi%
  \global\let\svgwidth\undefined%
  \global\let\svgscale\undefined%
  \makeatother%
  \begin{picture}(1,0.27096573)%
    \put(0,0){\includegraphics[width=\unitlength,page=1]{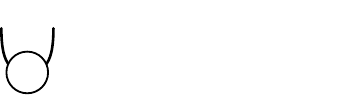}}%
    \put(-0.01123558,0.01117441){\color[rgb]{0,0,0}\makebox(0,0)[lb]{\smash{}}}%
    \put(0.04868949,0.03364631){\color[rgb]{0,0,0}\makebox(0,0)[lb]{\smash{$\sigma$}}}%
    \put(0,0){\includegraphics[width=\unitlength,page=2]{2cocyclenorm.pdf}}%
    \put(0.21910147,0.15215894){\color[rgb]{0,0,0}\makebox(0,0)[lb]{\smash{$=$}}}%
    \put(0,0){\includegraphics[width=\unitlength,page=3]{2cocyclenorm.pdf}}%
    \put(0.42322124,0.15349646){\color[rgb]{0,0,0}\makebox(0,0)[lb]{\smash{$,$}}}%
    \put(0,0){\includegraphics[width=\unitlength,page=4]{2cocyclenorm.pdf}}%
    \put(0.81835218,0.15215917){\color[rgb]{0,0,0}\makebox(0,0)[lb]{\smash{$=$}}}%
    \put(0,0){\includegraphics[width=\unitlength,page=5]{2cocyclenorm.pdf}}%
    \put(0.64794015,0.03899683){\color[rgb]{0,0,0}\makebox(0,0)[lb]{\smash{$\sigma$}}}%
  \end{picture}%
\endgroup%
}}~.
\end{align*}

\begin{remark}
The definition of bialgebra $2$-cocycles in a braided monoidal category appears in \cite{BD1}*{Definition~1.5}. The theory of $2$-cocyles over cocommutative Hopf algebras in a braided monoidal category was studied in \cite{Fem}.

The above definition of $2$-cocycles over $B$ in $\cB$ is a special case of $2$-cocycles on a monoidal category (cf. \cite{PSO}*{Section 2} or \cite{BB}*{Section 4.2}) for a monoidal category of the form $\rcomod{B}(\cB)$. A right $2$-cocycle over $B$ gives rise to a natural transformation $\mu^\sigma\colon \otimes\to \otimes$, where
$$\mu^{\sigma}_{V, W}:=(\ide_{V\otimes W\otimes \sigma})(\ide_V\otimes \Psi_{B,W}\otimes \ide_B)(\delta_V\otimes \delta_W),$$ satisfying
\begin{align}
\mu_{X,Y\otimes Z}(\ide_X\otimes \mu_{Y,Z})=\mu_{X\otimes Y, Z}(\mu_{X,Y}\otimes \ide_Z), &&\mu_{X,I}=\mu_{I,X}=\ide_X.
\end{align}

\end{remark}

\begin{lemma}\label{twistedproduct}
Let $\sigma\colon B\otimes B\to I$ be a convolution-invertible morphism in $\cB$. Then the right twisted product
\begin{equation}
m_\sigma:=(m\otimes \sigma)\Delta_{B\otimes B}
\end{equation}
makes $B$ an algebra object in $\cB$ if an only if $\sigma\in C^2_{\cB}(B,I)$. 

We denote the resulting algebra by $B_\sigma$. It is an algebra in $\lcomod{B}$, with coproduct given by $\Delta$ viewed as a map $B_{\sigma}\to B\otimes B_{\sigma}$.
\end{lemma}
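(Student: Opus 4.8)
The plan is to take $1_B$ as the unit of $(B,m_\sigma)$ and to prove the two algebra axioms separately, each being equivalent to one of the defining conditions of a $2$-cocycle; the convolution-invertibility hypothesis is only needed to land in $C^2_{\cB}(B,I)$ and plays no role in the equivalences themselves. Recall $m_\sigma=(m\otimes\sigma)\Delta_{B\otimes B}$ with $\Delta_{B\otimes B}=(\ide_B\otimes\Psi_{B,B}\otimes\ide_B)(\Delta\otimes\Delta)$.

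First I would establish that associativity of $m_\sigma$ is equivalent to (\ref{2cocycleinB}). Expanding $m_\sigma(m_\sigma\otimes\ide_B)$ and $m_\sigma(\ide_B\otimes m_\sigma)$ using coassociativity of $\Delta$, associativity of $m$, the bialgebra axiom (\ref{bialgebra}) (so that $\Delta$ is multiplicative), and naturality of $\Psi$, one rewrites both composites $B^{\otimes 3}\to B$ as a common iterated product $m^{(2)}\colon B^{\otimes 3}\to B$ decorated by a scalar factor valued in $I$; the two normal forms differ only in the arguments fed to the two $\sigma$-factors. I expect this is cleanest in graphical calculus, so I would record it in a figure. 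For the implication $\sigma\in C^2_{\cB}(B,I)\Rightarrow$ associative, applying (\ref{2cocycleinB}) to the duplicated middle leg equates the two scalar decorations, hence the two composites. For the converse I would post-compose the associativity equation with the counit $\varepsilon\colon B\to I$; since $\varepsilon m=\varepsilon\otimes\varepsilon$, this collapses the iterated product leg and merges the spare counits into the coproduct indices of the $\sigma$-decoration, leaving precisely the two sides of (\ref{2cocycleinB}).

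Next I would treat unitality. Using $\Delta 1_B=1_B\otimes 1_B$, the unit-naturality of $\Psi$, and $m(1_B\otimes\ide_B)=\ide_B$, the composite $m_\sigma(1_B\otimes\ide_B)$ simplifies to $(\ide_B\otimes\sigma(1_B\otimes\ide_B))\Delta$, which equals $\ide_B$ precisely when $\sigma(1_B\otimes\ide_B)=\varepsilon$: one direction is the counit axiom, and for the other, post-composing $(\ide_B\otimes f)\Delta=\ide_B$ with $\varepsilon$ forces $f=\varepsilon$ since $(\varepsilon\otimes\ide_B)\Delta=\ide_B$. The right-hand identity $m_\sigma(\ide_B\otimes 1_B)=\ide_B$ is symmetric and yields $\sigma(\ide_B\otimes 1_B)=\varepsilon$. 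Together with the associativity step this proves that $m_\sigma$ makes $B$ an algebra, with unit $1_B$, if and only if $\sigma$ satisfies (\ref{2cocycleinB}) and (\ref{2cocyclenorm}). For the comodule-algebra claim I would verify (\ref{comodalg}) for $A=B_\sigma$ with coaction $\delta=\Delta$: the unit compatibility $\Delta 1_B=1_B\otimes 1_B$ is immediate, and multiplicativity $\Delta\,m_\sigma=(m_B\otimes m_\sigma)(\ide_B\otimes\Psi_{B,B}\otimes\ide_B)(\Delta\otimes\Delta)$ follows by applying $\Delta$ to $m_\sigma$ and using (\ref{bialgebra}) and coassociativity, the scalar $\sigma$ factoring through $I$ and commuting past the remaining structure maps.

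The main obstacle I anticipate is the braiding bookkeeping in the associativity computation: unlike the symmetric case one cannot freely transpose tensor legs, so reducing $m_\sigma(m_\sigma\otimes\ide_B)$ and $m_\sigma(\ide_B\otimes m_\sigma)$ to a shared normal form, and checking that capping with $\varepsilon$ reproduces (\ref{2cocycleinB}) on the nose, both demand that each $\Psi_{B,B}$ occur exactly where the cocycle condition predicts. The unitality and comodule-algebra steps are routine by comparison.
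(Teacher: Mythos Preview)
Your proposal is correct and follows essentially the same route as the paper: the forward direction of associativity uses coassociativity, the bialgebra axiom, and naturality of $\Psi$; the converse is obtained by post-composing the associativity equation with $\varepsilon$; unitality is equated directly with the normalization conditions (\ref{2cocyclenorm}); and the comodule-algebra structure is checked from (\ref{bialgebra}) and coassociativity. Your write-up is simply more detailed than the paper's terse version.
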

\begin{proof}
It follows evidently (under use of coassociativity, the bialgebra condition of $B$, and naturality of the braiding) that given Equation (\ref{2cocycleinB}), $m_\sigma$ defines an associative product. Conversely, considering the equation \begin{align*}
\varepsilon m_\sigma(m_\sigma\otimes\ide_B)=\varepsilon m_\sigma(\ide_B\otimes m_\sigma)
\end{align*}
recovers  condition (\ref{2cocycleinB}). Equation (\ref{2cocyclenorm}) is equivalent to unitarity of the product. Similarly, it follows that $\Delta\colon B_{\sigma}\to B\otimes B_{\sigma}$ gives a coaction.
\end{proof}

The following construction adapts the idea of coboundaries  (see e.g. \cite{KS}*{Section 10.2.3}, or dually, \cite{Maj4}*{Proposition 6.2}) to this setup.

\begin{lemma}\label{coboundarylemma}
Given a convolution-invertible morphism $\beta\colon B\to I$ such that $\beta 1=\ide_I$. Then
$$\partial \beta=\beta^{-\ast} m(\ide_B\otimes \beta\otimes \ide_B\otimes \beta)(\Delta\otimes \Delta)=\vcenter{\hbox{
\begingroup%
  \makeatletter%
  \providecommand\color[2][]{%
    \errmessage{(Inkscape) Color is used for the text in Inkscape, but the package 'color.sty' is not loaded}%
    \renewcommand\color[2][]{}%
  }%
  \providecommand\transparent[1]{%
    \errmessage{(Inkscape) Transparency is used (non-zero) for the text in Inkscape, but the package 'transparent.sty' is not loaded}%
    \renewcommand\transparent[1]{}%
  }%
  \providecommand\rotatebox[2]{#2}%
  \ifx\svgwidth\undefined%
    \setlength{\unitlength}{57.72607162bp}%
    \ifx\svgscale\undefined%
      \relax%
    \else%
      \setlength{\unitlength}{\unitlength * \real{\svgscale}}%
    \fi%
  \else%
    \setlength{\unitlength}{\svgwidth}%
  \fi%
  \global\let\svgwidth\undefined%
  \global\let\svgscale\undefined%
  \makeatother%
  \begin{picture}(1,0.87125918)%
    \put(0,0){\includegraphics[width=\unitlength,page=1]{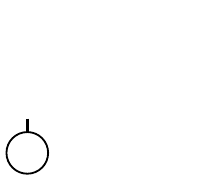}}%
    \put(-0.14915313,-0.03258773){\color[rgb]{0,0,0}\makebox(0,0)[lb]{\smash{}}}%
    \put(0.08471,0.05835905){\color[rgb]{0,0,0}\makebox(0,0)[lb]{\smash{$\beta^{-\ast}$}}}%
    \put(0,0){\includegraphics[width=\unitlength,page=2]{coboundary.pdf}}%
    \put(0.60672625,0.54748502){\color[rgb]{0,0,0}\makebox(0,0)[lb]{\smash{$\beta$}}}%
    \put(0,0){\includegraphics[width=\unitlength,page=3]{coboundary.pdf}}%
    \put(0.20164187,0.55258912){\color[rgb]{0,0,0}\makebox(0,0)[lb]{\smash{$\beta$}}}%
  \end{picture}%
\endgroup%
}}$$
gives a $2$-cocycle over $B$. 
More generally, given $\sigma\in C^2_{\cB}(B,I)$, we can define a new $2$-cocycle $\sigma^\beta$ by
\begin{equation}
\sigma^\beta=\beta^{-\ast}(m\otimes \sigma)(\ide_B\otimes \Psi_{B,B}\otimes \ide_B)(\Delta\otimes \beta\otimes\Delta\otimes \beta)(\Delta\otimes \Delta)=\vcenter{\hbox{
\begingroup%
  \makeatletter%
  \providecommand\color[2][]{%
    \errmessage{(Inkscape) Color is used for the text in Inkscape, but the package 'color.sty' is not loaded}%
    \renewcommand\color[2][]{}%
  }%
  \providecommand\transparent[1]{%
    \errmessage{(Inkscape) Transparency is used (non-zero) for the text in Inkscape, but the package 'transparent.sty' is not loaded}%
    \renewcommand\transparent[1]{}%
  }%
  \providecommand\rotatebox[2]{#2}%
  \ifx\svgwidth\undefined%
    \setlength{\unitlength}{68.97613162bp}%
    \ifx\svgscale\undefined%
      \relax%
    \else%
      \setlength{\unitlength}{\unitlength * \real{\svgscale}}%
    \fi%
  \else%
    \setlength{\unitlength}{\svgwidth}%
  \fi%
  \global\let\svgwidth\undefined%
  \global\let\svgscale\undefined%
  \makeatother%
  \begin{picture}(1,0.8378894)%
    \put(-1.15779129,-0.08163928){\color[rgb]{0,0,0}\makebox(0,0)[lb]{\smash{}}}%
    \put(0,0){\includegraphics[width=\unitlength,page=1]{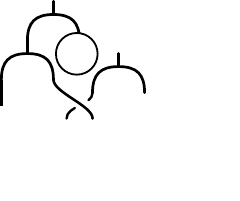}}%
    \put(0.27748806,0.57119451){\color[rgb]{0,0,0}\makebox(0,0)[lb]{\smash{$\beta$}}}%
    \put(0,0){\includegraphics[width=\unitlength,page=2]{coboundary2.pdf}}%
    \put(0.6708695,0.56692289){\color[rgb]{0,0,0}\makebox(0,0)[lb]{\smash{$\beta$}}}%
    \put(0,0){\includegraphics[width=\unitlength,page=3]{coboundary2.pdf}}%
    \put(0.45118404,0.11598363){\color[rgb]{0,0,0}\makebox(0,0)[lb]{\smash{$\sigma$}}}%
    \put(0,0){\includegraphics[width=\unitlength,page=4]{coboundary2.pdf}}%
    \put(0.04759426,0.03796731){\color[rgb]{0,0,0}\makebox(0,0)[lb]{\smash{$\beta^{-\ast}$}}}%
  \end{picture}%
\endgroup%
}}.
\end{equation}
\end{lemma}
\begin{proof}
To verify Equation (\ref{2cocycleinB}) for $\partial \beta$ is an exercise in graphical calculus. Note that both sides of the equation simplify to 
$$\beta^{-\ast}m(\ide\otimes m)(\ide_B\otimes \beta\otimes\ide_B\otimes \beta\otimes\ide_B\otimes \beta)(\Delta\otimes \Delta\otimes \Delta).$$

Similar methods are also key in checking that, more generally, $\sigma^\beta$ is a $2$-cocycle. We include this computation, which can be performed using graphical calculus:

\begin{align*}
\sigma^\beta(m\otimes \sigma^\beta\otimes \ide_B)(\Delta_{B\otimes B}\otimes \ide_B)
&=(\beta^{-\ast}m\otimes \sigma)(\ide_B\otimes \Psi_{B,B}\otimes \ide_B)(m\otimes m\otimes \sigma \otimes \Delta)\\&~\phantom{=}~(\Delta_{B\otimes B}\otimes (\ide\otimes \beta)\Delta\otimes (\ide\otimes \beta)\Delta\otimes (\ide\otimes \beta)\Delta)(\Delta_{B\otimes B}\otimes \ide_B)\\
&=\sigma(\beta^{-\ast}\otimes m\otimes \sigma\otimes \ide_B)(m \otimes \Delta_{B\otimes B}\otimes \ide_B)\\&~\phantom{=}~(m\otimes \ide\otimes (\ide\otimes \beta)\Delta\otimes(\ide\otimes \beta)\Delta\otimes (\ide\otimes \beta)\Delta)\Delta_{B\otimes B\otimes B}\\
&=\sigma(\beta^{-\ast}\otimes \ide_B\otimes m\otimes \sigma)(m \otimes \ide_B\otimes \Delta_{B\otimes B})\\&~\phantom{=}~(\ide_B\otimes m\otimes (\ide\otimes \beta)\Delta\otimes(\ide\otimes \beta)\Delta\otimes (\ide\otimes \beta)\Delta)\Delta_{B\otimes B\otimes B}\\
&=(\beta^{-\ast}m\otimes \sigma)(\ide_B\otimes m\otimes \ide_B\otimes m\otimes \sigma)\\&~\phantom{=}~(\Delta_{B\otimes B\otimes B}\otimes (\ide\otimes \beta)\Delta\otimes (\ide\otimes \beta)\Delta)((\ide\otimes \beta)\Delta\otimes \Delta_{B\otimes B})\\
&=\sigma^\beta(\ide_B\otimes m\otimes \sigma^\beta)(\ide_B\otimes \Delta_{B\otimes B}),
\end{align*}
where in the first equality the definition of $\sigma^\beta$, the bialgebra condition and (co)commutativity, as well as $(\beta^{-\ast}m)\ast (\beta m)=\varepsilon_{B^{\otimes 4}}$ are used. The second equality uses  (co)commutativity again, followed by application of the $2$-cocycle condition (\ref{2cocycleinB}) of $\sigma$ in the third equality. The fourth identity follows using (co)associativity, while the last one uses again that  $(\beta^{-\ast}m)\ast (\beta m)=\varepsilon_{B^{\otimes 4}}$, together with the counit axioms, the bialgebra condition, and the definition of $\sigma^\beta$.
\end{proof}

\begin{definition}
We define $2$-cocycles $\sigma$ and $\tau$ in $C^2_{\cB}(B,I)$ to be \emph{cohomologous}, and write $\sigma\sim \tau$, if there exists a convolution-invertible map $\beta$ as in Lemma \ref{coboundarylemma} such that $\tau=\sigma^\beta$. This gives an equivalence relation $\sim$ on $C^2_{\cB}(B,I)$. We denote the quotient modulo $\sim$ by $H^2_{\cB}(B,I)$ and refer to it as the \emph{2nd bialgebra cohomology space} of $B$.
\end{definition}

The space $H^2_{\cB}(B^*,I)$ is called \emph{non-abelian cohomology} of $B$ in \cite{Maj4}*{Section~6}. In the case where $B$ is cocommutative, the sets $C^2_{\cB}(B,I)$ and $H^2_{\cB}(B,I)$ have a group structure and generalize Sweedler's bialgebra  cohomology with trivial coefficient algebra \cite{Swe}. 

\begin{lemma}\label{cohomiso}
Let $\sigma$ and $\tau$ be cohomologous $2$-cocycles over $B$. Then \begin{align*}
\ide_B\ast \beta\colon B_{\tau}\longrightarrow B_{\sigma}
\end{align*}
is an isomorphism of $B$-comodule algebras.
\end{lemma}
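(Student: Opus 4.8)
The plan is to exhibit $f := \ide_B \ast \beta$ as a morphism of left $B$-comodule algebras $B_\tau \to B_\sigma$ which is invertible in $\cB$; since the inverse of an isomorphism of comodule algebras is automatically one, it then suffices to check that $f$ is invertible in $\cB$, intertwines the coactions, preserves units, and is multiplicative. Here I read $f = (\ide_B \otimes \beta)\Delta \colon B \to B \otimes I \cong B$, matching the formula $h \mapsto h_{(1)}\beta(h_{(2)})$ in the classical case. Three of these four checks are routine, and the multiplicativity carries the real content.

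First I would record invertibility: the candidate inverse is $g := (\ide_B \otimes \beta^{-\ast})\Delta$. Composing $g \circ f$ and using coassociativity brings the two scalar maps onto a single coproduct, producing the factor $(\beta^{-\ast} \ast \beta)$; since $\beta^{-\ast}$ is the two-sided convolution inverse of $\beta$, this equals $\varepsilon$, and the counit axiom collapses it to $\ide_B$. The composite $f \circ g$ is handled symmetrically via $(\beta \ast \beta^{-\ast}) = \varepsilon$. Next, $f$ is a morphism of left $B$-comodules: by Lemma \ref{twistedproduct} the coaction on both $B_\sigma$ and $B_\tau$ is $\Delta$, so the required identity $\Delta f = (\ide_B \otimes f)\Delta$ is immediate from coassociativity. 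Unit preservation $f 1_B = 1_B$ follows from the normalization $\beta 1 = \ide_I$ of Lemma \ref{coboundarylemma} together with $\Delta 1 = 1 \otimes 1$.

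The main step is multiplicativity, $f m_\tau = m_\sigma (f \otimes f)$. I would expand $m_\tau = (m \otimes \tau)\Delta_{B\otimes B}$, apply $f = (\ide_B \otimes \beta)\Delta$ on the outside, and substitute $\tau = \sigma^\beta$ using the explicit formula of Lemma \ref{coboundarylemma}. The decisive cancellation is that the leg $\beta^{-\ast}$ inside $\sigma^\beta$, evaluated on a product of the relevant legs of the two inputs, meets the factor $\beta$ produced by $f$ on $m$ applied to those same legs; after using the bialgebra axiom and naturality of the braiding to bring both onto one coproduct, they combine into $(\beta \ast \beta^{-\ast})$ evaluated on a \emph{product}, which reduces to $\varepsilon \otimes \varepsilon$. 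What remains is precisely the expansion of $m_\sigma(f \otimes f)$, carrying the two surviving copies of $\beta$, one on each tensor factor. In the heuristic symmetric case both sides reduce to $x_{(1)}y_{(1)}\sigma(x_{(2)},y_{(2)})\beta(x_{(3)})\beta(y_{(3)})$, which makes the matching transparent.

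The hard part will be the braiding bookkeeping: unlike the classical setting, $\Delta_{B\otimes B}$ carries a $\Psi_{B,B}$ and $\sigma^\beta$ itself contains a braiding, so positioning the $\beta$ and $\beta^{-\ast}$ legs on a common coproduct node — where $\beta \ast \beta^{-\ast} = \varepsilon$ can be invoked — requires repeated use of coassociativity, the bialgebra compatibility \eqref{bialgebra}, and naturality of $\Psi$. I would therefore carry out the computation in graphical calculus, as in Lemma \ref{coboundarylemma}, where the annihilation of the $\beta$/$\beta^{-\ast}$ pair against a multiplication node is visually clear, rather than in Sweedler notation, which hides the braiding.
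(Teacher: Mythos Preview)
Your proposal is correct and follows essentially the same approach as the paper: invertibility via $\ide_B \ast \beta^{-\ast}$, comodule compatibility by coassociativity, and the multiplicativity check carried out in graphical calculus with the key cancellation $\beta \ast \beta^{-\ast} = \varepsilon$. The only organizational difference is that the paper first rewrites the hypothesis $\tau = \sigma^\beta$ as the identity $\sigma\bigl((\ide\otimes\beta)\Delta \otimes (\ide\otimes\beta)\Delta\bigr) = (\beta^{-\ast}m \otimes \tau)\Delta_{B\otimes B}$, which packages the cancellation you describe into a single preparatory step and slightly streamlines the subsequent diagram manipulation; your direct substitution achieves the same thing.
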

\begin{proof}
Note that $\tau=\sigma^\beta$ is equivalent to the equation
\begin{equation}
\sigma((\ide\otimes \beta)\Delta\otimes(\ide\otimes \beta)\Delta)=(\beta^{-\ast}m\otimes \tau)\Delta_{B\otimes B}.
\end{equation}
For variation of methods, we include the proof that $\ide_B\ast \beta$ is a morphism of algebras using graphical calculus in Fig. \ref{Fig3}.
\begin{figure}[bt]
\[\vcenter{\hbox{\import{Graphics/}{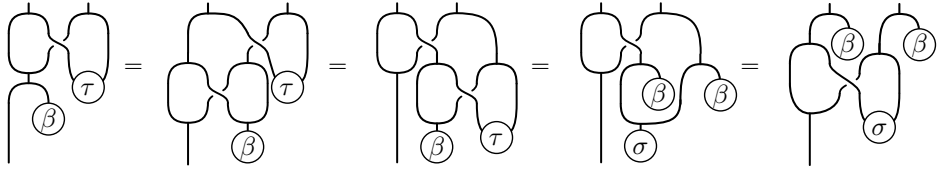}}}\]
\caption{Proof that $\ide_B\ast \beta$ is a morphism of algebras}\label{Fig3}
\end{figure}
Since $\beta$ is convolution invertible, $\ide\ast\beta$ is invertible, with inverse $\ide\ast \beta^{-\ast}$. If is further clear by coassociativity that $\ide\ast\beta$ is a morphism of $B$-comodules for the regular $B$-coaction in source and target.
\end{proof}

The following proposition explains the connection of $2$-cocycles over a braided Hopf algebra $B$ with $2$-cocycles over its \emph{bosonization} (or \emph{Radford biproduct}, \cites{Rad}) $B\rtimes H$ . This recovers part of \cite{CP}*{Theorem~4.4(i)}, where $B$ is a Hopf algebra object in $\lYD{H}$.

\begin{proposition}
Let $B$ be a bialgebra in $\lmod{H}$ for $H$ a quasitriangular bialgebra over $\Bbbk$. Then there exists an injective morphism 
$$\sigma \rtimes H\colon C^2_{H}(B,\Bbbk)\longrightarrow C^2(B\rtimes H,\Bbbk),$$
mapping $\sigma\in C^2_{H}(B,\Bbbk)$ to the right $2$-cocycle given by
\begin{align}
\sigma \rtimes H(bh\otimes cg)&=\sigma(b,h\triangleright c)\varepsilon(g).
\end{align}
This map descents to one of bialgebra cohomology spaces
$$\sigma \rtimes H\colon H^2_{H}(B,\Bbbk)\longrightarrow H^2(B\rtimes H,\Bbbk).$$
\end{proposition}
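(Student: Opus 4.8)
The plan is to establish the three assertions in turn: that $\sigma \rtimes H$ is a genuine right $2$-cocycle on the bosonization $B \rtimes H$, that the assignment $\sigma \mapsto \sigma \rtimes H$ is injective, and finally that it respects the coboundary relation and hence descends to cohomology. Throughout I will use the explicit structure maps of the bosonization: the smash product $bh \cdot b'h' = b(h_{(1)}\triangleright b')h_{(2)}h'$ and the Radford biproduct coproduct, which on generators reads $\Delta(b) = b_{(1)}R^{(2)}\otimes (R^{(1)}\triangleright b_{(2)})$ exactly as in (\ref{drincop2}), together with $\Delta(h) = h_{(1)}\otimes h_{(2)}$ and counit $\varepsilon(bh) = \varepsilon(b)\varepsilon(h)$. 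The formula for $\sigma \rtimes H$ is the evident analogue of the induced cocycle $\op{Ind}_B\sigma$ from Corollary \ref{induced2cocycles}, and the computation will parallel the one proving Theorem \ref{2cocyclethm}.

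For the first and main part, the normalization condition (\ref{cocyclecond2}) is immediate: since the $H$-action preserves the unit and $\sigma$ is normalized in $\lmod{H}$, one gets $\sigma\rtimes H(1\otimes cg) = \sigma(1, c)\varepsilon(g) = \varepsilon(cg)$ and $\sigma\rtimes H(bh\otimes 1) = \varepsilon(h)\sigma(b,1) = \varepsilon(bh)$. The cocycle identity (\ref{cocyclecond1}) is the real work. I would expand both sides on elements $x = bh$, $y = b'h'$, $z = b''h''$ of $B\rtimes H$, inserting the Radford coproduct; the inputs needed to collapse the resulting expression are the braided $2$-cocycle condition (\ref{2cocycleinB}) for $\sigma$ spelled out in $\lmod{H}$, the $H$-invariance of $\sigma$ (as a morphism in $\lmod{H}$ it satisfies $\sigma((h_{(1)}\triangleright b)\otimes(h_{(2)}\triangleright c)) = \varepsilon(h)\sigma(b\otimes c)$), the $H$-linearity of $m_B$ and $\Delta_B$ together with the fact that the $H$-action is by coalgebra maps, and the quasitriangularity axioms (\ref{Rmatrix1})--(\ref{Rmatrix3}) used to transport the R-matrix legs produced by the coproduct. \textbf{This bookkeeping is the principal obstacle}: the Radford coproduct injects several copies of $R$, and tracking them through the $H$-actions that sit inside the arguments of $\sigma$ is delicate. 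I expect graphical calculus, organized as in the proof of Theorem \ref{2cocyclethm} and Figure \ref{Fig3}, to be the most reliable way to carry out the cancellation.

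Injectivity is then easy: restricting $\sigma\rtimes H$ to the subspace $B\otimes B\subseteq (B\rtimes H)^{\otimes 2}$, i.e.\ setting $h=g=1$, yields $\sigma\rtimes H(b\otimes c) = \sigma(b, c)$, so $\sigma$ is recovered as a restriction of $\sigma\rtimes H$ and the map is injective already at the cochain level.

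For the descent to $H^2$, suppose $\tau = \sigma^\beta$ for a convolution-invertible $\beta\colon B\to\Bbbk$ with $\beta(1)=1$ as in Lemma \ref{coboundarylemma}. I would set $\hat\beta\colon B\rtimes H\to \Bbbk$, $\hat\beta(bh) = \beta(b)\varepsilon(h)$, and first check that $\beta\mapsto\hat\beta$ is multiplicative for convolution: using the counit normalization $(\ide\otimes\varepsilon)R = 1$ of the R-matrix, a short computation against the Radford coproduct gives $\hat\beta\ast\hat\gamma = \widehat{\beta\ast\gamma}$, where the convolution on the right is formed with $\Delta_B$. Since $\widehat{\varepsilon_B} = \varepsilon_{B\rtimes H}$ and $\hat\beta(1)=1$, it follows that $\hat\beta$ is convolution-invertible with inverse $\widehat{\beta^{-\ast}}$. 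The remaining point is the identity $(\sigma\rtimes H)^{\hat\beta} = \sigma^\beta\rtimes H$, obtained by matching the coboundary formula of Lemma \ref{coboundarylemma} against the Radford coproduct; this is a computation of the same flavour as, but shorter than, the cocycle verification, again using (\ref{Rmatrix1})--(\ref{Rmatrix3}) and the $H$-invariance of $\sigma$ and $\beta$. Granting it, $\sigma\sim\tau$ forces $\sigma\rtimes H \sim \tau\rtimes H$, so the map is well defined on $H^2$; combined with the cochain-level injectivity above, this produces the asserted injective map $H^2_H(B,\Bbbk)\to H^2(B\rtimes H,\Bbbk)$.
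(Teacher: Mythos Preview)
Your approach is correct but differs from the paper's in one essential respect. You propose to verify the cocycle identity (\ref{cocyclecond1}) for $\sigma\rtimes H$ by brute-force expansion on generic elements $bh$, $b'h'$, $b''h''$, and you rightly flag the R-matrix bookkeeping as the main obstacle. The paper sidesteps this entirely by invoking Lemma~\ref{twistedproduct}: rather than checking the cocycle condition directly, it computes the twisted product $b_{[1]}c_{[1]}\sigma\rtimes H(b_{[2]},c_{[2]})$ in $B\rtimes H$ and shows it coincides with $m_\sigma(b\otimes c)$, so that the twist of $B\rtimes H$ by $\sigma\rtimes H$ is exactly the bosonization $B_\sigma\rtimes H$. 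Since the latter is manifestly associative, Lemma~\ref{twistedproduct} then \emph{implies} $\sigma\rtimes H\in C^2(B\rtimes H,\Bbbk)$. This reduces the R-matrix manipulations to a single short chain (five lines in the paper, using (\ref{Rmatrix1})--(\ref{Rmatrix2}) and $H$-invariance of $\sigma$ once each), whereas your direct route would require tracking three copies of the Radford coproduct simultaneously. Your treatment of injectivity and of the descent to $H^2$ matches the paper's.

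One small overreach: in your last sentence you claim the induced map on $H^2$ is injective. The proposition does not assert this, and cochain-level injectivity does not by itself imply injectivity on cohomology (a nontrivial $\sigma$ could become cohomologous to the trivial cocycle after bosonization via some $\gamma\colon B\rtimes H\to\Bbbk$ not of the form $\hat\beta$). The paper only claims, and only proves, that the map is well defined on $H^2$.
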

\begin{proof}
We shall use the notation $\Delta(b)=b_{[1]}\otimes b_{[2]}$ to denote the coproduct of $B\rtimes H$.
Hence
\begin{align*}
b_{[1]}\otimes b_{[2]}=b_{(1)}R^{(1)}\otimes (R^{(1)}\triangleright b_{(2)}), &&\forall b\in B.
\end{align*}
We observe that for $b,c\in B$ we have
\begin{align*}
b_{[1]}c_{[1]}\sigma(b_{[2]},c_{[2]})&=b_{(1)}R_1^{(2)}c_{(1)}R_2^{(2)}\sigma(R_1^{(1)}\triangleright b_{(2)}, R_2^{(1)}\triangleright c_{(2)})\\&=b_{(1)}({R_1^{(2)}}_{(1)}\triangleright c_{(1)}){R_1^{(2)}}_{(2)}R_2^{(2)}\sigma(R_1^{(1)}\triangleright b_{(2)}, R_2^{(1)}\triangleright c_{(2)})\\
&=b_{(1)}(R_1^{(2)}\triangleright c_{(1)})R_3^{(2)}R_2^{(2)}\sigma(R_1^{(1)}R_3^{(1)}\triangleright b_{(2)}, R_2^{(1)}\triangleright c_{(2)})\\
&=b_{(1)}(R_1^{(2)}\triangleright c_{(1)})R_2^{(2)}\sigma(R_1^{(1)}{R_2^{(1)}}_{(1)}\triangleright b_{(2)}, {R_2^{(1)}}_{(2)}\triangleright c_{(2)})\\
&=b_{(1)}(R_1^{(2)}\triangleright c_{(1)})\sigma(R_1^{(1)}\triangleright b_{(2)}, c_{(2)})=m_{\sigma}(b\otimes c).
\end{align*}
where the semidirect product relations are applied, followed by the R-matrix relation Equation (\ref{Rmatrix2}) splitting $R_1$ up into $R_1$ and $R_3$, and next Equation (\ref{Rmatrix1}) summarizing $R_3$ and $R_2$ into just $R_2$; and in the penultimate equality we use that $\sigma$ is a morphism of $H$-modules and that $\varepsilon(R_2^{(1)})R_2^{(2)}=1$. The last equality is the definition of the right twisted product from Lemma \ref{twistedproduct}.

This calculation shows that the product of $B_\sigma \rtimes H$ is in fact the product given by twisting $B\rtimes H$ by the map $\sigma\rtimes H$ which takes values $\sigma\rtimes H(b\otimes c)=\sigma(b\otimes c)$ for $b,c\in B$. We can extend to $2$-cocycles defined on all of $B\rtimes H$ via
\begin{align*}\sigma\rtimes H(bh\otimes cg)= \sigma(b,h\triangleright c)\otimes \varepsilon(g).
\end{align*}
Then the product of $B_\sigma\rtimes H$ is obtained by right twist by $\sigma\rtimes H$. However, $B_\sigma\rtimes H$ is an associative product, so we conclude by Lemma \ref{twistedproduct} that $\sigma \rtimes H$ is an element of $C^2(B\rtimes H,\Bbbk)$. It is clear from construction that the mapping $\sigma \mapsto\sigma \rtimes H$ is injective. Further, it follows from a similar  computation as above that, mapping $\beta$ to $\beta\otimes \varepsilon_H$, the construction commutes with coboundary twist, and we thus obtain a map on the quotients
$H^2_H(B,\Bbbk)\to H^2(B\rtimes H,\Bbbk)$.
\end{proof}

\subsection{Induced 2-Cocycles over the Braided Drinfeld Double}\label{cleftbraided}

With the preliminary considerations from the previous sections, we can now generalize the results from the Section \ref{drin2co} to braided Drinfeld doubles.

\begin{corollary}
\label{induced2cocycles2} Let $C,B$ be weakly dual bialgebras (or Hopf algebras) in $\cB=\lmod{H}$. Then there exists two maps
\begin{align*}
\op{Ind}_B\colon &H_H^2(B,\Bbbk)\longrightarrow H^2(\Drin_{H}(C,B),\Bbbk), & \sigma \longmapsto \op{Ind}_B\sigma,\\
\op{Ind}_C\colon &H_H^2(\leftexp{\cop}{C},\Bbbk)\longrightarrow H^2(\Drin_{H}(C,B),\Bbbk), & \tau \longmapsto \op{Ind}_C\tau,
\end{align*}
where the $2$-cocycle $\op{Ind}_B\sigma$ is defined by requiring that
\begin{align}
\op{Ind}_B\sigma(b,b')&= \sigma(b,b'), & \op{Ind}_\Delta\sigma(c,c')&= \varepsilon(c)\varepsilon(c'), &\op{Ind}_\Delta\sigma(h,h')&= \varepsilon(h)\varepsilon(h'), 
\end{align}
for any elements $b,b'\in B$, $c,c'\in C$, $h,h'\in H$. The $2$-cocycle $\op{Ind}_B\tau$ is defined by requiring that 
\begin{align}
\op{Ind}_C\tau(b,b')&=\varepsilon(b)\varepsilon(b'), & \op{Ind}_C\tau(c,c')&=\tau(c,c'), &\op{Ind}_\Delta\sigma(h,h')&= \varepsilon(h)\varepsilon(h').
\end{align}

If $C,B$ are Hopf algebras, then $\Drin_{H}(C,B)$ is defined on $B\otimes H\otimes C$, on which the $2$-cocycle is given by
\begin{align}\label{inducedcocycleB2}
\op{Ind}_B\sigma(bhc,b'h'c')&=\sigma(b,h\triangleright b'_{(2)})\ev(c,b'_{(1)})\varepsilon(h')\varepsilon(c').
\end{align}
Similarly,
\begin{align}\label{inducedcocycleC3}
\op{Ind}_C\tau(chb,c'h'b')&=\tau(c,h{R^{(2)}}_{(1)}\triangleright c'_{(1)})\ev({R^{(2)}}_{(2)}\triangleright c'_{(2)},R^{(1)}\triangleright S(b))\varepsilon(h')\varepsilon(b').
\end{align}
\end{corollary}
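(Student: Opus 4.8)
The plan is to derive both maps from the cleft-object correspondence for comodule algebras over $\Drin_H(C,B)$, rather than by checking the $2$-cocycle condition by hand as in Theorem \ref{2cocyclethm}; this is the braided analogue of the route behind Corollary \ref{induced2cocycles} and Corollary \ref{cleftthm}. Starting from $\sigma\in C^2_H(B,\Bbbk)$, Lemma \ref{twistedproduct} produces the right-twisted algebra $B_\sigma$, a left $B$-comodule algebra in $\cB=\lmod{H}$ with coaction $\Delta$; since $\sigma$ is convolution-invertible, the identity morphism exhibits $B_\sigma$ as a left $B$-cleft object. Applying Corollary \ref{comodulealgebracor} with $A=B_\sigma$ (this direction needs only the bialgebra structure) shows that $B_\sigma\rtimes\leftexp{\cop}{C}\rtimes H$ is a left $\Drin_H(C,B)$-comodule algebra. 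I would then observe that this comodule algebra shares its underlying vector space and its regular coaction with $\Drin_H(C,B)$: the coaction formulas of Corollary \ref{comodulealgebracor} agree on generators with the coproduct (\ref{drincop1})--(\ref{drincop3}), so the identity map is a convolution-invertible isomorphism of $\Drin_H(C,B)$-comodules, making $B_\sigma\rtimes\leftexp{\cop}{C}\rtimes H$ a $\Drin_H(C,B)$-cleft object.

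The key point is that a left $\Drin_H(C,B)$-comodule algebra supported on the underlying space of $\Drin_H(C,B)$ with regular coaction, whose product differs from the original one by a convolution-invertible twist, is exactly a right twist by a $2$-cocycle (the $\cB=\Vect$ instance of Lemma \ref{twistedproduct}, cf. \cite{Mas2}). I would thus identify $B_\sigma\rtimes\leftexp{\cop}{C}\rtimes H$ with $\Drin_H(C,B)_{\op{Ind}_B\sigma}$ and read off $\op{Ind}_B\sigma$ as the twisting cocycle, which is automatically convolution-invertible. To pin down the explicit formula (\ref{inducedcocycleB2}) I would compare the cross-product multiplication with the twisted product $x\cdot y=x_{(1)}y_{(1)}\,\op{Ind}_B\sigma(x_{(2)},y_{(2)})$ on generators: the twist acts by $\sigma$ on $B$-$B$ products and trivially (by the counit) on the $C$- and $H$-components and on all mixed relations, while the factors $\ev(c,b'_{(1)})$ and $h\triangleright b'_{(2)}$ arise directly from the coproduct formulas (\ref{drincop2})--(\ref{drincop3}).

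For $\op{Ind}_C$ I would run the mirror-image argument using Corollary \ref{comodulealgebracor2}, which now requires $B,C$ to be Hopf algebras: a cocycle $\tau$ over $\leftexp{\cop}{C}$ twists $C$ into a right $C$-comodule algebra $C_\tau$ via the right-handed analogue of Lemma \ref{twistedproduct} together with the translation of Lemma \ref{rightleft}, and then $C_\tau\rtimes B\rtimes H$ is a $\Drin_H(C,B)$-cleft object. Extracting its twisting cocycle gives (\ref{inducedcocycleC3}); here the induced left $B$-action on $C_\tau$ of Equation (\ref{inducedBaction}), which already carries the antipode $S$ and the braiding, accounts for the appearance of $S(b)$ and of the $R$-matrix legs ${R^{(1)}},{R^{(2)}}_{(1)},{R^{(2)}}_{(2)}$. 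To see that both maps descend to cohomology I would invoke Lemma \ref{cohomiso}: if $\sigma\sim\sigma'$ through a coboundary $\beta$ as in Lemma \ref{coboundarylemma}, then $B_\sigma\cong B_{\sigma'}$ as $B$-comodule algebras, and crossing with $\leftexp{\cop}{C}\rtimes H$ yields an isomorphism of the associated cleft objects, the implementing coboundary on $\Drin_H(C,B)$ being $\beta\otimes\varepsilon_C\otimes\varepsilon_H$; hence $\op{Ind}_B\sigma\sim\op{Ind}_B\sigma'$, and symmetrically for $\op{Ind}_C$.

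The main obstacle I anticipate is the explicit verification of the closed formulas, and especially of (\ref{inducedcocycleC3}): tracking the $R$-matrix legs and the antipode through the right-to-left translation of Lemma \ref{rightleft}, the induced action (\ref{inducedBaction}), and the braiding of $\lmod{H}$ is where convention errors are easiest to make. A useful consistency check, and a possible alternative route bypassing the cleft-object language, is to specialize the direct computation of Theorem \ref{2cocyclethm}: since $\op{Ind}_B\sigma=\sigma\circ\triv_C$ and $\op{Ind}_C\tau=\triv_B\circ\tau$ in the unbraided case, the braided formulas (\ref{inducedcocycleB2})--(\ref{inducedcocycleC3}) must collapse to (\ref{inducedcocycleB}) and (\ref{inducedcocycleC}) of Corollary \ref{induced2cocycles} upon setting $H=\Bbbk$ and $R=1\otimes 1$.
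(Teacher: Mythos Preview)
Your approach is correct and matches the paper's own (one-line) proof, which simply states that the corollary is a consequence of Corollaries~\ref{comodulealgebracor} and~\ref{comodulealgebracor2} and also a special case of the following Theorem~\ref{general2cocycles}. You have spelled out in detail the first of these two routes via the cleft-object correspondence, and you correctly identify the second route (specializing $\sigma\circ\tau$ to $\sigma\circ\triv_C$ and $\triv_B\circ\tau$) as an alternative consistency check.
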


This corollary is a consequence of Corollaries \ref{comodulealgebracor} and \ref{comodulealgebracor2} but also a special case of the following Theorem.

\begin{theorem}\label{general2cocycles}
Let $C, B$ be weakly dual Hopf algebras in $\lmod{H}$. Given $\sigma \in C^2_{H}(B,\Bbbk)$ and $\tau \in C^2_{H}(\leftexp{\cop}{C},\Bbbk)$, then $\sigma\circ \tau\in C^2(\Drin_H(C,B), \Bbbk)$, where
\begin{align*}
\sigma\circ \tau(b,b')=\sigma(b,b'),&&
\sigma\circ \tau(c,c')=\tau(c,c'),&&\sigma\circ \tau(h,h')=\varepsilon(hh').
\end{align*}
That is, for all $b,b'\in B, h,h'\in H$, and $c,c'\in C$,
\begin{align}
\sigma\circ \tau(bhc,b'h'c')=\sigma(b,hR^{-(1)}\triangleright b'_{(2)})\ev(c_{(2)}, b'_{(1)})\tau(R^{-(2)}\triangleright c_{(1)},h'\triangleright c').
\end{align}
This construction commutes with coboundaries. 
\end{theorem}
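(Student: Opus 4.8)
The plan is to check that $\sigma\circ\tau$ is a right $2$-cocycle by verifying its three defining features on $\Drin_H(C,B)$ — the normalization (\ref{cocyclecond2}), convolution-invertibility, and the cocycle identity (\ref{cocyclecond1}) — and then, separately, to show the assignment $(\sigma,\tau)\mapsto\sigma\circ\tau$ respects coboundaries, so that it descends to the cohomology spaces. The normalization is immediate from the formula together with $\sigma(b,1)=\varepsilon(b)$, $\tau(c,1)=\varepsilon(c)$, $\ev(\ide_C\otimes 1_B)=\varepsilon_C$ and the counit relations on generators. The genuinely new content lies in the cocycle identity; the remaining two properties I would arrange to follow formally from the construction.

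Rather than repeat the long Sweedler computation of Theorem \ref{2cocyclethm} with extra R-matrix legs, my main plan is to \emph{reduce} to that theorem through bosonization. Since $\sigma\in C^2_H(B,\Bbbk)$ and $\tau\in C^2_H(\leftexp{\cop}{C},\Bbbk)$, the Proposition of Section \ref{braidedcycles} lifts them to ordinary $2$-cocycles $\sigma\rtimes H$ on the bosonization $B\rtimes H$ and $\tau\rtimes H$ on $\leftexp{\cop}{C}\rtimes H$ (the latter formed in $\overline{\cB}$, which is again of the form $\lmod{H}$ for a quasitriangular structure). The pairing $\ev$ and the quasitriangular structure of $H$ make $B\rtimes H$ and $\leftexp{\cop}{C}\rtimes H$ into ordinary dually paired Hopf algebras, and $\Drin_H(C,B)$ arises from their ordinary Drinfeld double by identifying the two copies of $H$. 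I would then show that the ordinary cocycle $(\sigma\rtimes H)\circ(\tau\rtimes H)$ furnished by Theorem \ref{2cocyclethm} descends along this identification to the stated formula; the insertions $hR^{-(1)}\triangleright b'_{(2)}$, $R^{-(2)}\triangleright c_{(1)}$ and $h'\triangleright c'$ are exactly the residue of commuting the shared $H$ past the $B$- and $C$-legs using the relations (\ref{braideddrineq2}) and the braided coproducts (\ref{drincop2})--(\ref{drincop3}).

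A cleaner, more conceptual alternative is to pass through cleft objects. Twisting $B$ by $\sigma$ and $\leftexp{\cop}{C}$ by $\tau$ via Lemma \ref{twistedproduct} produces a $B$-comodule algebra and a $\leftexp{\cop}{C}$-comodule algebra; feeding these into Corollary \ref{comodulealgebracor} (or its weak-quasitriangular form, Corollary \ref{comodulealgebracor3}) yields a $\Drin_H(C,B)$-comodule algebra structure on $B_\sigma\rtimes(\leftexp{\cop}{C})_\tau\rtimes H$. This is a cleft object, and the $2$-cocycle computed from its cleaving map (as in the discussion preceding Theorem \ref{2cocyclethm}) is precisely $\sigma\circ\tau$. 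This route has the advantage that normalization and convolution-invertibility of $\sigma\circ\tau$ come for free from the cleft structure, leaving only the identification of the cocycle read off from the comodule algebra with the closed formula.

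For compatibility with coboundaries, given convolution-invertible $\alpha\colon B\to\Bbbk$ and $\gamma\colon C\to\Bbbk$ with $\alpha 1=\gamma 1=\ide_\Bbbk$, I would set $\beta\colon\Drin_H(C,B)\to\Bbbk$ by $\beta(bhc)=\alpha(b)\varepsilon(h)\gamma(c)$ and verify $(\sigma\circ\tau)^{\beta}=\sigma^{\alpha}\circ\tau^{\gamma}$ using the coboundary formula of Lemma \ref{coboundarylemma}; this yields well-definedness of the induced map on $H^2_H(B,\Bbbk)\times H^2_H(\leftexp{\cop}{C},\Bbbk)$. The main obstacle throughout is the R-matrix bookkeeping: one must track how the single shared $H$ interacts with both the $B$- and the $C$-side, and confirm that the braiding $\Psi$ on the $B$-side and the inverse braiding $\Psi^{-1}$ on the $\leftexp{\cop}{C}$-side combine to produce exactly the asymmetric R-matrix pattern in the formula, and not a conjugate of it; this is where careful use of (\ref{Rmatrix1})--(\ref{Rmatrix3}) and the antipode cancellations $\ev(S_C\otimes\ide_B)=\ev(\ide_C\otimes S_B)$ will be needed.
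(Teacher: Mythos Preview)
Both of your proposed routes have gaps that prevent them from going through as written.

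For the bosonization approach, the assertion that $B\rtimes H$ and $\leftexp{\cop}{C}\rtimes H$ are \emph{ordinary} dually paired Hopf algebras is unsubstantiated: a weak duality of the bosonizations would require a Hopf pairing $H\otimes H\to\Bbbk$ on the $H$-legs, and the quasitriangular structure $R\in H\otimes H$ does not furnish one. Without such a pairing you cannot invoke Theorem \ref{2cocyclethm}, and even granting some ad hoc pairing, one must still prove that the resulting cocycle on the larger double descends along the identification of the two $H$'s; this descent is exactly where the nontrivial R-matrix pattern in the stated formula originates, so you have not avoided the work, only relocated it.

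For the cleft-object approach, Corollary \ref{comodulealgebracor} takes a single $B$-comodule algebra $A$ and returns $A\rtimes\leftexp{\cop}{C}\rtimes H$ with the \emph{untwisted} $\leftexp{\cop}{C}$; feeding in $A=B_\sigma$ produces $B_\sigma\rtimes\leftexp{\cop}{C}\rtimes H$, not $B_\sigma\rtimes(\leftexp{\cop}{C})_\tau\rtimes H$. There is no slot in that corollary (or in Corollary \ref{comodulealgebracor2}) for a second twist, so you only recover $\op{Ind}_B\sigma$ or $\op{Ind}_C\tau$ separately, never $\sigma\circ\tau$. The paper's proof is precisely a repair of this idea: it defines the doubly-twisted algebra $B\rtimes_\sigma^\tau C$ on $B\otimes C$ with product $m_{\sigma\circ\tau}=(m_\sigma\otimes m_\tau)(\ide\otimes\Psi^{-1}_{C,B}\otimes\ide)(\ide\otimes\ev\otimes\ide)(\ide_B\otimes\Delta_B\otimes\Delta_C\otimes\ide_B)$, verifies associativity directly by graphical calculus, introduces a category $\cT^\sigma_\tau$ of compatible $B_\sigma$- and $(\leftexp{\cop}{C})_\tau$-modules, and checks from scratch that $\leftexp{C,B}{\mathbf{YD}}$ acts on $\cT^\sigma_\tau$ by tensor product; the cocycle is then read off via Tannaka--Krein reconstruction. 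In other words, the new content you were hoping to sidestep---a direct verification involving both twists simultaneously---is exactly what the proof supplies. Your treatment of coboundaries via $\beta(bhc)=\alpha(b)\varepsilon(h)\gamma(c)$ is along the right lines and matches the paper's concluding argument, which phrases it as an isomorphism $B\rtimes_\sigma^\tau C\cong B\rtimes_{\sigma'}^{\tau'} C$.
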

\begin{proof}
We give a more conceptual proof than in Theorem \ref{2cocyclethm}. First, define a category $\cT_{\tau}^\sigma$ consisting of objects $V$ in $\cB=\lmod{H}$ which are both a left $B_\sigma$-module with action morphism  $b_V$ and a left $\leftexp{{\cop}}{C}_{\tau}$-modules in $\lmod{H}$ with action morphism $c_V$ satisfying the compatibility condition
\begin{align}
\vcenter{\hbox{
\begingroup%
  \makeatletter%
  \providecommand\color[2][]{%
    \errmessage{(Inkscape) Color is used for the text in Inkscape, but the package 'color.sty' is not loaded}%
    \renewcommand\color[2][]{}%
  }%
  \providecommand\transparent[1]{%
    \errmessage{(Inkscape) Transparency is used (non-zero) for the text in Inkscape, but the package 'transparent.sty' is not loaded}%
    \renewcommand\transparent[1]{}%
  }%
  \providecommand\rotatebox[2]{#2}%
  \ifx\svgwidth\undefined%
    \setlength{\unitlength}{176.01276535bp}%
    \ifx\svgscale\undefined%
      \relax%
    \else%
      \setlength{\unitlength}{\unitlength * \real{\svgscale}}%
    \fi%
  \else%
    \setlength{\unitlength}{\svgwidth}%
  \fi%
  \global\let\svgwidth\undefined%
  \global\let\svgscale\undefined%
  \makeatother%
  \begin{picture}(1,0.60554072)%
    \put(0.10460754,0.4310328){\color[rgb]{0,0,0}\makebox(0,0)[lb]{\smash{ }}}%
    \put(0,0){\includegraphics[width=\unitlength,page=1]{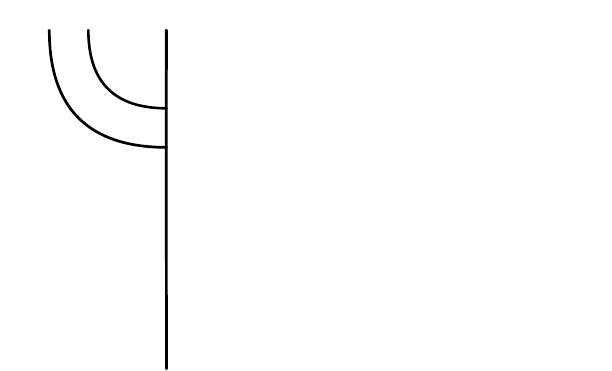}}%
    \put(0.3277319,0.30082591){\color[rgb]{0,0,0}\makebox(0,0)[lb]{\smash{$=$}}}%
    \put(0,0){\includegraphics[width=\unitlength,page=2]{Tsigmatau3.pdf}}%
    \put(0.76218938,0.19489849){\color[rgb]{0,0,0}\makebox(0,0)[lb]{\smash{$\tau$}}}%
    \put(0,0){\includegraphics[width=\unitlength,page=3]{Tsigmatau3.pdf}}%
    \put(0.59194927,0.19932105){\color[rgb]{0,0,0}\makebox(0,0)[lb]{\smash{$\sigma$}}}%
    \put(0,0){\includegraphics[width=\unitlength,page=4]{Tsigmatau3.pdf}}%
    \put(-0.00282961,0.57779393){\color[rgb]{0,0,0}\makebox(0,0)[lb]{\smash{$B$}}}%
    \put(0.06108621,0.57748913){\color[rgb]{0,0,0}\makebox(0,0)[lb]{\smash{$C$}}}%
    \put(0.12515339,0.57748913){\color[rgb]{0,0,0}\makebox(0,0)[lb]{\smash{$B$}}}%
    \put(0.18876649,0.57718432){\color[rgb]{0,0,0}\makebox(0,0)[lb]{\smash{$C$}}}%
    \put(0.46330698,0.5796415){\color[rgb]{0,0,0}\makebox(0,0)[lb]{\smash{$B$}}}%
    \put(0.59539967,0.57933669){\color[rgb]{0,0,0}\makebox(0,0)[lb]{\smash{$C$}}}%
    \put(0.76259743,0.57933669){\color[rgb]{0,0,0}\makebox(0,0)[lb]{\smash{$B$}}}%
    \put(0.8943874,0.57903189){\color[rgb]{0,0,0}\makebox(0,0)[lb]{\smash{$C$}}}%
  \end{picture}%
\endgroup%
}}~~.
\end{align}
This corresponds to the following equality of morphisms:
\begin{align}\begin{split}
&c_V(\ide_C\otimes b_V)(\ide\otimes c_V)(\ide\otimes b_V)\\&=b_V(\ide_B\otimes c_V)(m\otimes \sigma\otimes \tau\otimes m\otimes \ide_V)(\Delta_{B\otimes B}\otimes \Delta_{B\otimes C}\otimes \ide_V)\\&~\phantom{=}~(\ide\otimes \Psi^{-1}_{C,B}\otimes \ide)(\ide\otimes \ev \otimes \ide)(\ide_B\otimes \Delta_B\otimes\Delta_C\otimes \ide_{B\otimes V})
\end{split}
\end{align}
This condition is equivalent (under use of the antipode axioms) to the condition
\begin{align}\label{Tsigmatau2}
\vcenter{\hbox{
\begingroup%
  \makeatletter%
  \providecommand\color[2][]{%
    \errmessage{(Inkscape) Color is used for the text in Inkscape, but the package 'color.sty' is not loaded}%
    \renewcommand\color[2][]{}%
  }%
  \providecommand\transparent[1]{%
    \errmessage{(Inkscape) Transparency is used (non-zero) for the text in Inkscape, but the package 'transparent.sty' is not loaded}%
    \renewcommand\transparent[1]{}%
  }%
  \providecommand\rotatebox[2]{#2}%
  \ifx\svgwidth\undefined%
    \setlength{\unitlength}{160.125024bp}%
    \ifx\svgscale\undefined%
      \relax%
    \else%
      \setlength{\unitlength}{\unitlength * \real{\svgscale}}%
    \fi%
  \else%
    \setlength{\unitlength}{\svgwidth}%
  \fi%
  \global\let\svgwidth\undefined%
  \global\let\svgscale\undefined%
  \makeatother%
  \begin{picture}(1,0.61383974)%
    \put(0.11062559,0.47380024){\color[rgb]{0,0,0}\makebox(0,0)[lb]{\smash{ }}}%
    \put(0,0){\includegraphics[width=\unitlength,page=1]{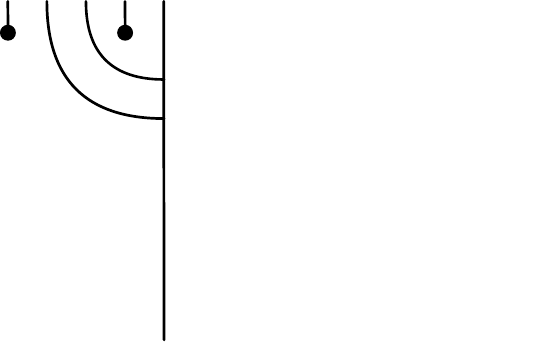}}%
    \put(0.3512047,0.32983763){\color[rgb]{0,0,0}\makebox(0,0)[lb]{\smash{$=$}}}%
    \put(0,0){\includegraphics[width=\unitlength,page=2]{Tsigmatau2.pdf}}%
    \put(0.76324369,0.21423649){\color[rgb]{0,0,0}\makebox(0,0)[lb]{\smash{$\tau$}}}%
    \put(0,0){\includegraphics[width=\unitlength,page=3]{Tsigmatau2.pdf}}%
    \put(0.57611221,0.21909786){\color[rgb]{0,0,0}\makebox(0,0)[lb]{\smash{$\sigma$}}}%
  \end{picture}%
\endgroup%
}}~~.
\end{align}
There are two things to check. First, that the product defined on $B\otimes C$ by
\begin{align}\begin{split}
m_{\sigma\circ\tau}&=(m\otimes \sigma\otimes \tau\otimes m)(\Delta_{B\otimes B}\otimes \Delta_{B\otimes C})(\ide\otimes \Psi^{-1}_{C,B}\otimes \ide)(\ide\otimes \ev \otimes \ide)\\&~\phantom{=}~(\ide_B\otimes \Delta_B\otimes\Delta_C\otimes \ide_{B})
\end{split}
\end{align}
provides an associative algebra object. It then follows that $\cT^\sigma_\tau$ is a category which is equivalent to left modules over the algebra $B\rtimes_{\sigma}^\tau C:=(B\otimes C, m_{\sigma\circ\tau})$. Second, we check that $\cT^{\sigma}_\tau$ is a left categorical module over the category $\leftexp{C,B}{\mathbf{YD}}$ (which was defined in the proof of Proposition \ref{drinfeldprop}). The action is given by the tensor product structure of tensoring such Yetter--Drinfeld modules. The theorem then follows under the equivalences $\leftexp{C,B}{\mathbf{YD}}\simeq \lmod{\Drin_H(C,B)}$ and $\cT^\sigma_\tau\simeq \lmod{(B\rtimes_{\sigma}^\tau C)\rtimes H}$ using the Tannaka-Krein reconstruction argument as in \cite{Lau}*{Proposition 3.8.4}. In particular, the right $2$-cocycle on $\Drin_H(C,B)$ is obtained as $\sigma\circ \tau=\varepsilon_{\Drin_H(C,B)}(m_{\sigma\circ\tau}\rtimes H)$.

We observe that the product of $B\rtimes_\sigma^\tau C$ can be written as  
$$m_{\sigma\circ\tau}=(m_\sigma\otimes m_\tau)(\ide\otimes \Psi^{-1}_{C,B}\otimes \ide)(\ide\otimes \ev \otimes \ide)(\ide_B\otimes \Delta_B\otimes\Delta_C\otimes \ide_{B}),$$
where $m_\tau$ is the left $2$-cocycle twist of the multiplication on $C$ by $\tau$. We denote $m_\tau=\vcenter{\hbox{
\begingroup%
  \makeatletter%
  \providecommand\color[2][]{%
    \errmessage{(Inkscape) Color is used for the text in Inkscape, but the package 'color.sty' is not loaded}%
    \renewcommand\color[2][]{}%
  }%
  \providecommand\transparent[1]{%
    \errmessage{(Inkscape) Transparency is used (non-zero) for the text in Inkscape, but the package 'transparent.sty' is not loaded}%
    \renewcommand\transparent[1]{}%
  }%
  \providecommand\rotatebox[2]{#2}%
  \ifx\svgwidth\undefined%
    \setlength{\unitlength}{32.94062858bp}%
    \ifx\svgscale\undefined%
      \relax%
    \else%
      \setlength{\unitlength}{\unitlength * \real{\svgscale}}%
    \fi%
  \else%
    \setlength{\unitlength}{\svgwidth}%
  \fi%
  \global\let\svgwidth\undefined%
  \global\let\svgscale\undefined%
  \makeatother%
  \begin{picture}(1,0.59412856)%
    \put(0,0){\includegraphics[width=\unitlength,page=1]{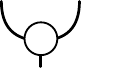}}%
    \put(0.27859823,0.15136592){\color[rgb]{0,0,0}\makebox(0,0)[lb]{\smash{$\tau$}}}%
  \end{picture}%
\endgroup%
}}$. We now check that $m_{\sigma\circ\tau}$ is associative, by comparing $m_{\sigma\circ\tau}(m_{\sigma\circ\tau}\otimes \ide_{B\otimes C})$ and $m_{\sigma\circ\tau}(\ide_{B\otimes C}\otimes m_{\sigma\circ\tau})$ in Fig. \ref{Fig4}.
\begin{figure}[tb]
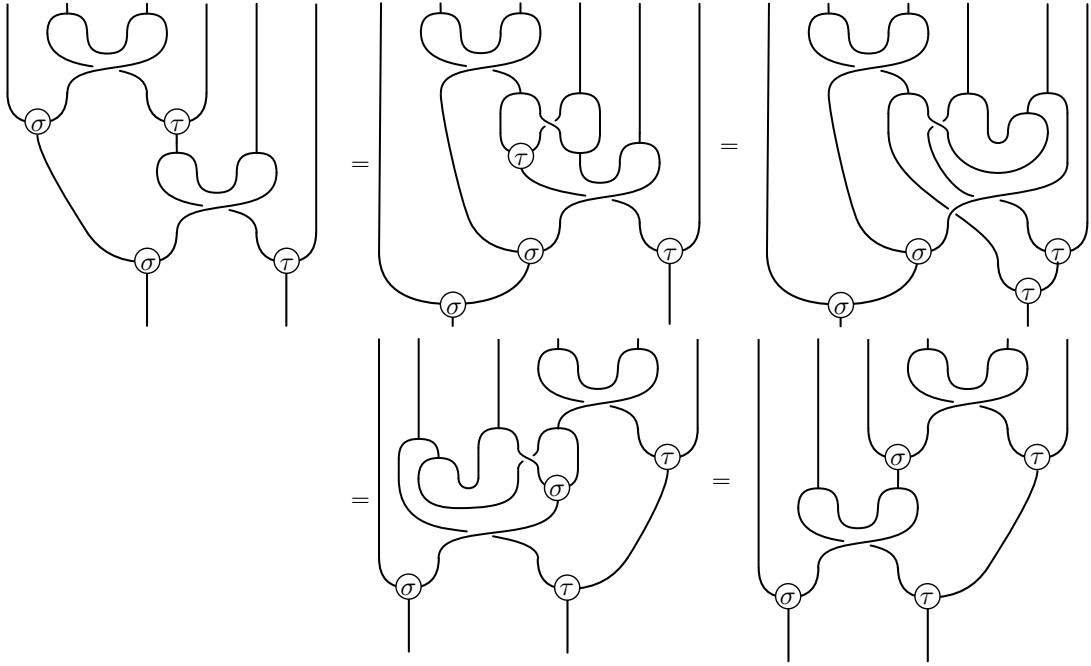

\begin{align*}
\vcenter{\hbox{\import{Graphics/}{twistprodproof0.pdf_tex}}}&=\vcenter{\hbox{\import{Graphics/}{twistprodproof.pdf_tex}}}\\
&=\vcenter{\hbox{\import{Graphics/}{twistprodproof1.pdf_tex}}}~.
\end{align*}
\caption{Proof that $m_{\sigma\circ\tau}$ is associative}
\label{Fig4}
\end{figure}
The first picture equals $m_{\sigma\circ\tau}(m_{\sigma\circ\tau}\otimes \ide_{B\otimes C})$, and in the first equality we apply that $\Delta_C\colon C_\tau\to C_\tau\otimes C$ makes $C_\tau$ a right $C$-comodule algebra in $\cB$. In the second equality, coassociativity of $m_\tau$ and the weak Hopf algebra duality of $C$ and $B$ are applied, cf. Equation (\ref{dualhopfcond}). The third equality applies properties of the braiding and (co)associativity. Finally, the forth equality uses that $\Delta_B\colon B_\sigma\to B\otimes B_\sigma$ makes $B_\sigma$ a left $B$-comodule algebra, together with Equation (\ref{dualhopfcond}). The last picture indeed equals $m_{\sigma\circ\tau}(\ide_{B\otimes C}\otimes m_{\sigma\circ\tau})$.

Second, we verify that that we can tensor an object in $\cT^{\sigma}_\tau$ on the left with a Yetter--Drinfeld module in $\leftexp{C,B}{\mathbf{YD}}$ using graphical calculus in Fig. \ref{Fig5}.

\begin{figure}[bt]
\begin{align*}
\vcenter{\hbox{\import{Graphics/}{Tsigmatauproof.pdf_tex}}}&=\vcenter{\hbox{\import{Graphics/}{Tsigmatauproof1.pdf_tex}}}\\
&=\vcenter{\hbox{\import{Graphics/}{Tsigmatauproof2.pdf_tex}}}~~.
\end{align*}
\caption{Proof of tensor product action on $\cT^{\sigma}_{\tau}$}
\label{Fig5}
\end{figure}

In this computation, the Yetter--Drinfeld condition (\ref{drinfeldcomp}) is applied in the second equality, while the defining condition (\ref{Tsigmatau2}) of $\cT_\tau^\sigma$ is applied in the last equality. The other steps use naturality of the braiding, combined with (co)associativity.

Finally, it follows, using the description of the product on $B\rtimes_\sigma^\tau C$ that if $\sigma\sim\sigma'$ and $\tau\sim\tau'$, then $B\rtimes_\sigma^\tau C\cong B\rtimes_{\sigma'}^{\tau'} C$. This implies that the map of $2$-cocycles commutes with coboundaries, and hence descents to a map
$H^2_H(B,\Bbbk)\times H^2_H(\leftexp{\cop}{C},\Bbbk)\longrightarrow H^2(\Drin_H(C,B),\Bbbk)$.
\end{proof}

\begin{example}\label{heistwist}
The braided Heisenberg double $\Heis_H(C,B)$ is obtained from $\Drin_H(C,B)$ via twist with the $2$-cocycle
\begin{align*}
\op{Ind}_B\triv(bhc,b'h'c')&=\varepsilon(b)\varepsilon(h)\ev(h_{(1)}\triangleright c,h_{(2)}\triangleright b')\varepsilon(h')\varepsilon(c'),
\end{align*}
which is induced by the trivial $2$-cocycle $\triv=\varepsilon\otimes \varepsilon$ over $B$. This results recovers \cite{Lau}*{Corollary 3.8.5}, cf. also Example \ref{Luexample}.
\end{example}

\begin{remark}
Theorem \ref{general2cocycles} gives a map
$$H^2_H(B,\Bbbk)\times H^2_H(\leftexp{\cop}{C},\Bbbk)\longrightarrow H^2(\Drin_H(C,B),\Bbbk).$$
Note that this map is in general not surjective. For example, the trivial cocycle on $\Drin_H(C,B)$ does not lay in the image unless in trivial cases, or the highly degenerate pairing $\ev=\varepsilon_C
\otimes\varepsilon_B$.

Note further that there exists a map
$$H_2^H(B,\Bbbk)\longrightarrow H^2_H(\leftexp{\cop}{C},\Bbbk),$$
where $H_2^H(B,\Bbbk)$ denotes the set of right $2$-cycles over $B$ in $\lmod{H}$ up to boundary transformations. That is, equivalence classes of elements $c\in B\otimes B$ such that 
\begin{align}
\vcenter{\hbox{
\begingroup%
  \makeatletter%
  \providecommand\color[2][]{%
    \errmessage{(Inkscape) Color is used for the text in Inkscape, but the package 'color.sty' is not loaded}%
    \renewcommand\color[2][]{}%
  }%
  \providecommand\transparent[1]{%
    \errmessage{(Inkscape) Transparency is used (non-zero) for the text in Inkscape, but the package 'transparent.sty' is not loaded}%
    \renewcommand\transparent[1]{}%
  }%
  \providecommand\rotatebox[2]{#2}%
  \ifx\svgwidth\undefined%
    \setlength{\unitlength}{113.26496862bp}%
    \ifx\svgscale\undefined%
      \relax%
    \else%
      \setlength{\unitlength}{\unitlength * \real{\svgscale}}%
    \fi%
  \else%
    \setlength{\unitlength}{\svgwidth}%
  \fi%
  \global\let\svgwidth\undefined%
  \global\let\svgscale\undefined%
  \makeatother%
  \begin{picture}(1,0.47215577)%
    \put(-0.30790599,-0.05589341){\color[rgb]{0,0,0}\makebox(0,0)[lb]{\smash{}}}%
    \put(0,0){\includegraphics[width=\unitlength,page=1]{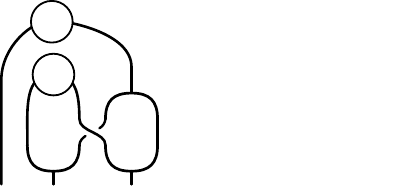}}%
    \put(0.44885301,0.2023507){\color[rgb]{0,0,0}\makebox(0,0)[lb]{\smash{$=$}}}%
    \put(0,0){\includegraphics[width=\unitlength,page=2]{2cycle2.pdf}}%
    \put(0.09672357,0.39154034){\color[rgb]{0,0,0}\makebox(0,0)[lb]{\smash{$c$}}}%
    \put(0.10027092,0.25437781){\color[rgb]{0,0,0}\makebox(0,0)[lb]{\smash{$c$}}}%
    \put(0.62645488,0.25556016){\color[rgb]{0,0,0}\makebox(0,0)[lb]{\smash{$c$}}}%
    \put(0.88895553,0.32532393){\color[rgb]{0,0,0}\makebox(0,0)[lb]{\smash{$c$}}}%
  \end{picture}%
\endgroup%
}}~.
\end{align}
The map is given by mapping $c$ to $c^*= \ev^{\otimes 2}(\ide_{C\otimes C}\otimes c)$. Hence Theorem \ref{general2cocycles}, in particular, produces a map
$$H^2_H(B,\Bbbk)\times H_2^H(B,\Bbbk)\longrightarrow H^2(\Drin_H(C,B),\Bbbk), \quad (\sigma,c)\longmapsto \sigma\circ c^*.$$
\end{remark}

We remark that \cite{Sch2}*{Section 4} classifies all cleft objects over $u_q(\fr{g})$ by viewing this Hopf algebra as a quotient of a bicross product.

\subsection*{Acknowledgments}

The author thanks Kobi Kremnizer and Shahn Majid for interesting and helpful discussions on the subject of this paper. Early parts of this research were supported by an EPSRC Doctoral Prize at the University of East Anglia. The author also thanks Florin Panaite for hints to further references and a helpful discussion, and the anonymous referee for helpful feedback and suggestions.

Graphics are created using Inkscape.

\bibliography{biblio}
\bibliographystyle{amsrefs}

\end{document}